\documentclass{article}
\usepackage{graphicx} 
\newcommand{\matrixwidth}{.90\textwidth}
\newcommand{\OneAndAHalfSpacedXI}{%
    \renewcommand{\baselinestretch}{1.5}\normalsize%
}

\OneAndAHalfSpacedXI
\usepackage{bbm}
\usepackage{tikz}
\usepackage[normalem]{ulem}
\usetikzlibrary{arrows,positioning,shapes.geometric}
\usepackage{caption}
\captionsetup{font=small}
\usepackage{subcaption}
\usepackage{booktabs} 
\usepackage{float}
\usepackage{graphicx} 
\usepackage{geometry} 
\geometry{
   a4paper,
   total={170mm,257mm},
   left=25mm,
   right = 25mm,
   top=20mm}

\usepackage{indentfirst,csquotes}
\usepackage[round,authoryear]{natbib}
\usepackage{multirow} 

\usepackage{appendix}
\usepackage{amssymb,amsthm,amsmath}
\DeclareMathOperator{\Tr}{Tr}
\DeclareMathOperator{\sech}{sech}
\DeclareMathOperator{\Cov}{Cov}
\DeclareMathOperator{\diag}{diag}
\usepackage{xcolor,paralist,titlesec,fancyhdr,etoolbox}
\usepackage[
  colorlinks=true,     
  linkcolor=blue,      
  citecolor=blue,      
  urlcolor=blue        
]{hyperref}
\newtheorem{theorem}{Theorem}[section]
\newtheorem{definition}[theorem]{Definition}
\newtheorem{example}[theorem]{Example}
\newtheorem{assumption}[theorem]{Assumption}
\newtheorem{lemma}[theorem]{Lemma}
\newtheorem{proposition}[theorem]{Proposition}
\newtheorem{corollary}[theorem]{Corollary}

\theoremstyle{definition} 
\newtheorem{remark}[theorem]{Remark} 
\theoremstyle{remark} 

\title{Dynamic Factor Models with Forward-Looking Views}


\author{Anas Abdelhakmi\footnote{Institute of Operations Research and Analytics, National University of Singapore, Email: a.anas@u.nus.edu}, Andrew E.B. Lim\footnote{Department of Analytics and Operations, Department of Finance, and Institute of Operations Research and Analytics, National University of Singapore, Email: andrewlim@nus.edu.sg} }

\begin{document}

\maketitle


\begin{abstract}
Prediction models calibrated using historical data may forecast poorly if the dynamics of the present and future differ from observations in the past. For this reason, predictions can be improved if information like forward looking views about the state of the system are used to refine the forecast. We develop an approach for combining a dynamic factor model for risky asset prices calibrated on historical data, and noisy expert views of future values of the factors/covariates in the model, and study the implications for dynamic portfolio choice. By exploiting the graphical structure linking factors, asset prices, and  views, we derive closed-form expressions for the dynamics of the factor and price processes after conditioning on the views. For linear factor models, the price process becomes a time-inhomogeneous affine process with a new covariate formed from the views. We establish a novel theoretical connection between the conditional factor process and a process we call a \emph{Mean-Reverting Bridge (MrB)}, an extension of the classical Brownian bridge. We derive the investor's optimal portfolio strategy and show that views influence both the myopic mean-variance term and the intertemporal hedge. The optimal dynamic portfolio when the long-run mean of the expected return is uncertain and learned online from data is also derived. More generally, our framework offers a generalizable approach for embedding forward-looking information about covariates in a dynamic factor model.
\end{abstract}

\noindent {\bf Keywords:} Dynamic Factor Model, forward-looking views, Mean-Reverting Bridge, Kalman Smoothing, Optimal Portfolio, Stochastic Control.

\section{Introduction}



Decision makers commonly rely on quantitative prediction models to make decisions in uncertain environments. Such models, however, are typically calibrated to fit  historical data and may forecast poorly if the future market environment  differs from what has been observed in the  past. This suggests that predictions and decision making can  be improved if  forward-looking information is used to adjust the original  model calibrated on  (backwards-looking) historical data. In the case of a {\it dynamic factor model} for asset prices, one source of forward-looking information is (noisy) expert opinions about the future values of the economic (e.g., GDP, interest rates) or company-performance indicators used as covariates in the factor model. We develop an approach for incorporating such forward-looking information in a backwards-looking factor model, and study the impact  on covariate and price dynamics  and dynamic portfolio choice.

A classical approach to blending backward-looking models and forward-looking views is the Black-Litterman model \citep{Black_1991, Black_1992}. This model, however, is designed for a single-period problem and cannot handle complex information structures like views on covariates or views on returns over horizons different from the investor's.  It provides no guidance for the dynamic investor on updating forecasts as prices and covariates evolve over time. While dynamic Black-Litterman models have been proposed (\cite{Frey2012}, \cite{Sass2017}, \cite{Davis2013, Davis2020, Davis2021, Davis2022}), they typically consider views on the \emph{current} state of a hidden factor process. One exception is the work by \cite{Abdelhakmi2025_BL} who consider  forward looking views on asset returns when prices are geometric Brownian motion.


In this paper we formulate a generalization of the dynamic Black-Litterman model in which experts provide noisy views on {\it future values of the covariates}. This differs from  \cite{Abdelhakmi2025_BL} where views are expressed on returns; there are no factors in their model. The generalization  to a factor model with forward-looking views on factors is important because 
it is often easier and  more natural for an expert to predict future values of a covariate (e.g., an economic indicator like inflation, interest rates, GDP, or a company performance indicator like earnings) than asset price returns. We show that forward-looking views on the covariates result in richer price and covariate dynamics --  the coefficients of existing factors become time-varying and  new factors defined in terms of the views are introduced in the expected return  -- due to the intertemporal correlation   between asset returns and changes in the covariates. We establish a novel connection between view-conditional factor dynamics and the notion of a mean-reverting bridge,  and also consider the setting where the long-run mean of the risky asset is  uncertain but learned over time (all parameters in \cite{Abdelhakmi2025_BL} as assumed known). We then explore the implications for dynamic portfolio choice problem, quantifying explicitly how such views shape the optimal policy.


\subsection*{Summary of contributions}
The main contributions of the paper are as follows:

\begin{enumerate}


\item We formulate a Bayesian graphical model in which asset prices are driven by risk factors and experts provide noisy forecasts of the value of these factors at future times. We show that factors conditional on views evolve as a time-inhomogeneous mean-reverting process with time-dependent reversion rate and mean. There is also a new factor defined in terms of the views. To our knowledge, this is the first dynamic Black-Litterman model where experts provide forward-looking views on the covariates in a dynamic factor model.

\item We establish a link between the conditional mean-reverting process and the \emph{Mean-Reverting Bridge (MrB)}. In the one-dimensional setting, noisy views about a mean-reverting risk factor transform it into a mean-reverting bridge with terminal value and hitting time that depends on the noise structure of the view. We explicitly characterize the properties of MrB and provide sufficient conditions for its existence in the multi-dimensional case.

\item When views are provided only on risk factors, one might expect that they only affect factor (covariate) dynamics but not the structure of the expected returns of the price process. This is not the case when shocks to asset returns and the risk factors are correlated. We show that constant coefficients become time-varying and a new view-dependent covariate that depends on the view.


\item  We formulate and solve the optimal dynamic portfolio choice problem and explicitly characterize the investor's optimal portfolio strategy. We show that views influence both the myopic mean-variance term and the intertemporal hedging demand. The optimal dynamic policy can also be decomposed into a no-views policy and a views-induced adjustment term. We use this to show that the magnitude of the adjustment induced by views depends on the correlation between assets and is increasing in the precision of the views. Finally, we show that this adjustment can be obtained by solving the same control problem as the no-views case, but with different terminal conditions.

    \item We extend our results to the case where the long-run mean of the expected return is uncertain and learned online with market data. We derive dynamics of the factor process and the posterior mean of the uncertain parameter, and show how learning interacts with the views. We derive the optimal allocation policy and show that the investor hedges both the underlying risk factors and posterior updates from learning.

\item Our experiments show that the dynamic Black-Litterman investor with views  significantly outperforms an investor who rebalances her portfolio to the myopic Black-Litterman holding at each rebalancing epoch and with significantly less turnover. This occurs because views also affect how the dynamic investor hedges changes in the factors. The myopic investor does not hedge and is forced to rebalance more aggressively at every rebalancing epoch. The dynamic investor with views also outperforms the dynamic investor without views, though the degree of outperformance depends on the noise in the views.

\end{enumerate}

\subsection*{Literature review}
Expert forecasts have proven effective in improving investment outcomes and deriving more diversified portfolios in portfolio allocation decisions.  
This has been extensively explored in a single-period setting (e.g., \cite{Black_1991,Black_1992, BertsimasBL, Andrew2020}). 
 Single period-models make it difficult to include views of returns over horizons different from the investor's. It is also interesting and practically important to consider dynamic investment with expert views.
 
 To address these limitations, recent literature has focused on integrating expert forecasts into dynamic and continuous-time models by treating them as additional information streams. Most prior work adopts a Bayesian filtering approach, modeling expert views as noisy signals about the \emph{current} state or drift of an underlying factor process. For example, \cite{Frey2012}, \cite{Sass2017, Sass03042023}, and \cite{GabihKondakjiWunderlich2024} assume experts provide unbiased but noisy estimates of the instantaneous drift; the investor's belief about the true drift is then dynamically updated through stochastic filtering. Similarly, \cite{Davis2013, Davis2020, Davis2021, Davis2022} consider variants of a factor model in which  experts provide noisy views over time about the current value of the  factors (assumed to be hidden). Factors are estimated using Kalman filtering. Their model also accounts for biased views.  In contrast, the experts in our setup provide noisy views about the value of observed covariates at a given time in the future (e.g., company earnings in the next quarter). Our derivation of conditional price and factor dynamics is related to Kalman smoothing, though ``smoothed" estimates need to be updated as prices and factors are realized over time.

In contrast, \cite{Abdelhakmi2025_BL} propose a framework where expert views are noisy estimates of asset returns at a future point in time. In their model, assets follow a geometric Brownian motion with constant drift and volatility, and expert forecasts lead to a forward-looking adjustment of the asset dynamics. There are no factors in their model. Our paper extends this forward-looking model to a  multi-factor setting with views being given about the value of the factors at a future point in time. This structure is practically important because views are often expressed about economic or company-performance indicators (e.g., earnings over the next quarter). Relative to  \cite{Abdelhakmi2025_BL}, views about factors lead to richer price and covariate dynamics due to the correlation between asset returns and factor innovations over time.
   
Forward-looking views naturally lead to the emergence of {\it bridges}, stochastic processes conditional on information about their value at some point in the future. (See, for example,  \cite{BrownianBridgesPINSKY, JeanBlanc} for the Brownian bridge, which is probably the most well known example of such a process). In this paper, we introduce the \emph{Mean-Reverting Bridges (MrB)}, a generalization of this idea that replaces Brownian motion with mean-reverting processes. 
Previous work has analyzed Ornstein-Uhlenbeck (OU) bridges (e.g., \cite{OUB2008,OUB2013,OUB2017}), which condition OU processes on a known future value. We generalize this to a multidimensional setting with noisy, correlated observations of the future values of the mean-reverting process. We characterize properties of the resulting process and derive its dynamics in both the one- and multi-dimensional settings.

\subsection*{Outline}
We introduce the graphical model for market dynamics and expert views in Section \ref{sec2}. In Section \ref{sec3}, we derive the market dynamics conditioned on the views and show that it becomes an affine model, with risk-factors and views as covariates. We define the notion of mean-reverting bridge in Section \ref{sec4} and provide an intuitive interpretation of the conditional dynamics of the risk-factors in terms of this process. In Section \ref{sec5}, we formulate and solve the associated dynamic portfolio choice problem. Section \ref{sec6}, extends  the model to cases where the drift is uncertain and investors must learn it from observed market data. Section \ref{sec7} presents numerical results and simulations, and Section \ref{sec8} concludes.
To enhance the flow of the paper, proofs have been relegated to the Appendix.

\subsection*{Notation}
Throughout the paper, we assume that all random variables and stochastic processes are defined on the common probability space $(\Omega, \mathcal{F}, \mathbb{P})$. All vectors are column vectors. For a vector $S \in \mathbb{R}^N$, we denote its $i^{th}$ element by $S_i$, $i \in [N]$, and the diagonal matrix of stock prices by $D(S) = \diag(S_1, \dots, S_N) \in \mathbb{R}^{N \times N}$. For a matrix $L$, we use $L_i$ to denote its $i^{th}$ column, and $L_{ij}$ its $(i,j)^{th}$ element. We use $I_N$ for the $N$ by $N$ identity matrix, and $\top$ for the transpose operator.

\section{Model Setup}
\label{sec2}
In this section, we introduce the continuous time affine model of asset prices with forward-looking expert views. We specify the dynamics of the underlying factors and assets in Section \ref{sec21}, introduce the view model in Section \ref{sec22}, and the associated Bayesian graphical representation in Section \ref{sec23}.

\subsection{Financial Market}
\label{sec21}
Consider a financial market of $N$ risky assets $S = (S_1, \dots, S_N)^\top$ and a risk-free asset $S_0$ with interest rate $r_f > 0$. The drift of the processes is driven by a $d$ risk factors $X = (X_1, \dots, X_d)^\top$, whose dynamics are given by
\begin{equation}
\label{eq:dX}
    dX(t) = \Theta(\mu - X(t)) dt + L^X dW(t),
\end{equation}
where $W$ is an $N'$-dimensional standard Brownian motion, $\Theta \in \mathbb{R}^{d \times d}$ is the mean-reversion rate, $\mu \in \mathbb{R}^d$ is the long-term mean, and $L^X \in \mathbb{R}^{d \times N'}$ is the diffusion matrix. 

The price $S_i(t)$ of the risky asset $i \in [N]$ evolves as 
\begin{equation*}
    \frac{dS_i(t)}{S_i(t)} = \big(\alpha_i + \sum_{j=1}^{d} \beta_{i,j} X_j(t)\big) dt + \sum_{j=1}^{N'}L^S_{i,j} dW_j(t)
\end{equation*}
where $\beta_{i,j} \in \mathbb{R}$ can be interpreted as the regression coefficient of the asset $i \in [N]$ on the factor $j \in [d]$. For the rest of the paper, we express the price process as
\begin{equation}
\label{eq:PricesVec}
    dS(t) = D(S(t)) \big((\alpha + \beta X(t)) dt + L^S dW(t)\big)
\end{equation}
where $\alpha = (\alpha_1, \dots, \alpha_N)^\top$, and $\beta = (\beta_{i,j})$ with $i \in \{1, \dots,N\}$ and $ j \in \{ 1,\dots, d\}$. A key feature of our model is the interaction between asset and factor shocks, which we characterize through the covariance matrix \(\Sigma^{S,X}:= L^S (L^X)^\top\). We allow this covariance to be non-zero, as the correlation structure between asset returns and economic covariates is empirically significant (e.g., \cite{PastorStambaugh2009}). We assume a linear model for factors as this is standard in the literature. However, our model of views and the derivation of the factor dynamics conditional on views extends to broader classes of factor-price dependencies (see Section \ref{sec3}).


The parameters in \eqref{eq:dX} and \eqref{eq:PricesVec} are assumed to be known to the investor; in reality, they are calibrated from historical data. We consider an extension in Section \ref{sec6} where  $\alpha$ is also uncertain and learned online by the investor.
The mean-reversion parameters $(\Theta,\mu)$ can be estimated by fitting a first-order vector autoregression to the discretised factor series, following \citet{NIELSEN200083}; the intercepts $\alpha$ and loadings $\beta$ come from an ordinary-least-squares regression of log-returns on lagged factors.  
Finally, the diffusion matrices $L^{X}$ and $L^{S}$ are deduced from the sample quadratic variation of factors and returns \citep{He2002,walters2013}. 

  

\subsection{Expert Views}
\label{sec22}
At the start of the investment horizon, the investor receives a noisy prediction about the value of the factor $X$ at a point of time in the future, the horizon of which may differ from the terminal time of her investment problem.  This differs from much of the  literature (e.g., \cite{Davis2020,Davis2022,Frey2012}) where views are noisy observations of the current value of a latent process. \cite{Abdelhakmi2025_BL} consider forward-looking views on asset returns. The key difference in the present paper is that views are on the factors \(X\) that drive returns. 

Let $Y(t_1,t_2) \in \mathbb{R}^K$ be the vector of $K$ views given at time $t_1$, about the value of the factors $X$ at time $t_2 \geq t_1$. Conditioned on the value of the factors being $X(t_2)$, we assume that $Y(t_1,t_2)$ is normally distributed
\begin{equation*}
    Y(t_1,t_2) \,|\, X(t_2) = P X(t_2) + \epsilon(t_1,t_2) \sim \mathcal{N}\big( P X(t_2), \Omega(t_1,t_2) \big)
\end{equation*}
where $P \in \mathbb{R}^{K \times  d}$ is a linear mapping from factors to views, and $\epsilon(t_1,t_2) \sim \mathcal{N}\big(0, \Omega(t_1,t_2) \big)$. The covariance matrix $\Omega(t_1,t_2)$ models the accuracy of the views and the dependence between them. 
Notably, $\Omega(t,t) = 0$. For more details on estimating $\Omega$ from historical data, we refer to \cite{Omega_estimation}.

For simplicity, we assume that all views are given at the start of the investment horizon ($t_1 = 0$) and that the horizon of the views matches the investment horizon ($t_2 = T$).  This is expressed as
\begin{equation}
    \label{eq:Y_view}
    Y(0,T)\,|\,X(T) = P X(T) + \epsilon(0,T) \sim \mathcal{N}\big(PX(T), \Omega(0,T)\big).
\end{equation}
There is no essential difficulty allowing the view horizon to differ from $T$, though this extension comes with some notational baggage. For the remainder of the paper, we denote $\Omega(0,T)$ simply as $\Omega$.


\begin{example}
    To illustrate the idea, suppose the market consists of three risk factors, $X_{Inf}$ (Inflation rate), $X_{Int}$ (Interest rate), $X_{DY}$: (Dividend yield). The vector of risk factors is given by
    \begin{equation*}
        X(t) = \begin{bmatrix}
            X_{Inf}(t) \\ X_{Int}(t) \\ X_{DY}(t)
        \end{bmatrix}.
    \end{equation*}
At the start of the investment period $t = 0$, the investor receives expert views related to future realizations of $X$ at time $t = T$ (e.g., $T = 3$ months). Forecasts could be \textit{absolute views} about the future realizations of a factor, for example,``The dividend yield will go up by $3\%$", or \textit{relative view} about some linear combination of the factors, for example,``The inflation rate will be higher than the interest rate by $1\%$". In this case, the investor receives the view
\begin{equation*}
    y = \begin{bmatrix}
        3\% \\ 1\%
    \end{bmatrix}\in \mathbb{R}^2
\end{equation*}
of the linear mapping of factors
\begin{equation*}
    PX(T) = \begin{bmatrix}
        0 & 0 & 1\\
        1 & -1 & 0
    \end{bmatrix}
        \begin{bmatrix}
            X_{Inf}(T) \\ X_{Int}(T) \\ X_{DY}(T)
        \end{bmatrix}
    = \begin{bmatrix}
        X_{DY}(T) \\ X_{Inf}(T) - X_{Int}(T)
    \end{bmatrix}.
\end{equation*}

\end{example}

\begin{assumption}[Non-degeneracy and stationarity]\label{ass:nondeg}
We make the following assumptions:
\textnormal{(i)} $N' \ge \max(N,d)$ and the diffusion matrices $L^{X},L^{S}$ have full row rank; equivalently,  
$\Sigma^{X}:=L^{X}(L^{X})^{\top}$ and $\Sigma^{S}:=L^{S}(L^{S})^{\top}$ are positive definite;  
\textnormal{(ii)} All eigenvalues of the mean-reversion matrix $\Theta$ have strictly positive real parts;  
\textnormal{(iii)} The expert-view error covariance matrix $\Omega$ is positive definite.
\end{assumption}

\begin{remark}
(i) rules out redundant factors or assets and keeps instantaneous volatilities strictly positive,  
(ii) guarantees that the factor process $X$ is mean-reverting and moments stay finite, and  
(iii) reflects that every view carries some uncertainty and is not an exact linear combination of others.
\end{remark}

Throughout, $\mathcal F_t:=\sigma(W_s;\,s\le t)$ denotes the natural filtration generate by the Brownian motion \(W(\cdot)\) and $\mathcal{F}_t^Y := \sigma(\mathcal{F}_t \vee \sigma(Y(0,T)))$ the enlarged filtration that also contains the expert views.

\subsection{Graphical Representation}
\label{sec23}
Bayesian graphical models offer a straightforward and intuitive way to capture uncertainty, dependencies, and causal relationships among variables. In this section, we extend the model presented by \cite{Andrew2020} to a multi-period scenario where asset prices, denoted as $S$, are influenced by a set of risky factors, $X$. In our representation, random variables are depicted as nodes with circles for unobserved random variables (the vector of unseen asset prices and factors) and squares for ones that are observed (views). The edges in the graph indicate conditional dependencies between these variables.

Figure \ref{fig:figure1} shows a Bayesian network representation of the discrete time version of the problem where $t = 0, \dots, T$. At any time $\tau \in \{0, \dots, T\}$, the investor has access to past realizations of the risky factors and asset prices $\{X(0), S(0), \dots, X(\tau - 1), S(\tau - 1)\}$. Without views, the investor relies on the prior model, using \eqref{eq:dX} to predict the future evolution of the factors and \eqref{eq:PricesVec} for asset prices. With forward-looking predictions provided by \eqref{eq:Y_view}, the investor needs to update his prediction model.

\begin{figure}[H]
\centering
\begin{tikzpicture}
[
roundnode/.style={circle, draw=black!80, fill=gray!5, very thick, minimum size=7mm, align=center},
squarednode/.style={rectangle, draw=black!80, fill=gray!5, very thick, minimum size=5mm, align=center},
]
\node[squarednode]      (observation)                              {$Y(0,T)$};
\node[roundnode]        (rT)       [above=of observation] { $S(T)$ \\ $X(T)$};
\node[]                 (dots)     [left=of rT] {\dots};

\node[roundnode]        (rtau)     [left=of dots] { $S(\tau)$ \\ $X(\tau)$};
\node[squarednode]        (rtau1)    [left=of rtau] { $S(\tau-1)$ \\ $X(\tau-1)$};
\node[]                 (dots2)    [left=of rtau1] {\dots};
\node[squarednode]        (r1)       [left=of dots2] { $S(1)$ \\ $X(1)$};
\node[squarednode]        (r0)       [left=of r1] { $S(0)$ \\ $X(0)$};

\draw[->] (rT.south) -- (observation.north);
\draw[<-] (rT.west) -- (dots.east);
\draw[<-] (dots.west) -- (rtau.east);
\draw[<-] (rtau.west) -- (rtau1.east);
\draw[<-] (rtau1.west) -- (dots2.east);
\draw[<-] (dots2.west) -- (r1.east);
\draw[<-] (r1.west) -- (r0.east);

\end{tikzpicture}
\captionsetup{format=hang, singlelinecheck=false, justification=raggedright}
\caption{Bayesian network of the model. \textmd{The figure shows a discrete time version of the problem where $t = 0, \dots, T$, and the noisy view $Y(0,T)$ of the factors $X(T)$ is revealed at $t = 0$. An investor at time $0 \leq \tau \leq T$  knows the realizations of the factors $\{X(0), \dots, X(\tau - 1)\}$, the asset prices $\{S(0), \dots, S(\tau - 1)\}$, and the forward-looking views $Y(0,T)$.}}
\label{fig:figure1}
\end{figure}

\section{Market Dynamics Conditional on Expert Views}
\label{sec3}
We now derive the dynamics of the factors $X(t)$ and asset prices $S(t)$ conditioned on the forward-looking views $Y(0,T) = y$. Let \(\mathbb Q := \mathbb P( \, \cdot \, | \, Y(0,T) = y)\) denote the conditional probability measure. We denote by $X^y(t): = X(t) \,|\, Y(0,T) = y$ and $S^y(t) := S(t) \,|\, Y(0,T) = y$ the conditional risk factors and price processes. The following proposition gives the conditional dynamics of the risk factors~$X^y(t)$.
\begin{proposition}
\label{prop:X^y}
    Suppose that the risk factors \(X(t)\) satisfy \eqref{eq:dX} and views $Y(0,T)$ satisfy \eqref{eq:Y_view}. Conditional on $Y(0,T) = y$, the vector of risk factor process $X^y(t)$ has dynamics
    \begin{equation}
    \label{eq:dX^y}
        dX^y(t) = \tilde{\Theta}(t) \big(\tilde{\mu}(t,y)- X^y(t)\big)  dt   + L^X dW^\mathbb{Q}(t)
    \end{equation}
    where 
    \begin{equation*}
        \tilde{\Theta}(t) = \Theta + L^X \eta(t) P e^{-\Theta (T-t)} \in \mathbb{R}^{d \times d}
    \end{equation*}
    is a time-dependent reversion rate, and 
    \begin{equation*}
        \tilde{\mu}(t,y) = \mu +  \tilde{\Theta}(t)^{-1} L^X \eta(t) \left(y - P \mu \right) \in \mathbb{R}^d
    \end{equation*}
    is the time-varying long-term mean. The matrix \(\eta(t) \in \mathbb{R}^{d \times K}\) is given by
    \begin{equation}
    \begin{split}
        \eta(t) &= (P e^{-\Theta (T-t)} L^X)^\top \big(P (\Sigma - e^{-\Theta(T-t)} \Sigma e^{-\Theta^\top (T-t)})P^\top + \Omega\big)^{-1}
        \label{eq:eta}
    \end{split}
    \end{equation}
    where $\Sigma \in \mathbb{R}^{d \times d}$ is the long-run covariance matrix of $X$ satisfying the Lyapunov equation
    \mbox{\(\Theta \Sigma + \Sigma \Theta^\top = \Sigma^X\)},
    and $W^\mathbb{Q}(t)$ is a standard $N'$-dimensional Brownian motion under the conditional measure \(\mathbb Q\), adapted to the filtration $\mathcal{F}_t^Y$. 
\end{proposition}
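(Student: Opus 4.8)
The plan is to recognise $X^{y}$ as a Doob $h$-transform of the Ornstein--Uhlenbeck process $X$ --- i.e.\ as the diffusion $X$ \emph{conditioned} on the noisy terminal functional $Y(0,T)=y$ --- and then to read the conditional drift off the $h$-function. Let $h(t,x)$ denote the density of $Y(0,T)$, at the observed value $y$, given $X(t)=x$; by the Markov property of $X$ it equals the conditional density of $Y(0,T)$ given $\mathcal{F}_{t}$ evaluated along $X(t)$, and it is a strictly positive $\mathcal{F}_{t}$-martingale. Setting $p_{t}:=h(t,X(t))$ and applying It\^o's formula gives $\partial_{t}h+\mathcal{L}h=0$ and $d\langle p_{\cdot},W\rangle_{t}=(L^{X})^{\top}\nabla_{x}h(t,X(t))\,dt$, so the standard initial-enlargement result used in the bridge literature yields that, under the regular conditional probability $\mathbb{Q}=\mathbb{P}(\,\cdot\mid Y(0,T)=y)$, the process $W^{\mathbb{Q}}(t):=W(t)-\int_{0}^{t}(L^{X})^{\top}\nabla_{x}\log h(s,X(s))\,ds$ is an $N'$-dimensional $\mathcal{F}_{t}^{Y}$-Brownian motion. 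Substituting into \eqref{eq:dX} leaves the diffusion coefficient untouched and produces the drift $\Theta(\mu-X^{y}(t))+\Sigma^{X}\nabla_{x}\log h(t,X^{y}(t))$, so everything reduces to computing $\nabla_{x}\log h$.

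For that, I would solve \eqref{eq:dX} to get $X(T)\mid X(t)=x\sim\mathcal{N}\!\big(\mu+e^{-\Theta(T-t)}(x-\mu),\,\Sigma_{T\mid t}\big)$ with $\Sigma_{T\mid t}=\int_{t}^{T}e^{-\Theta(T-s)}\Sigma^{X}e^{-\Theta^{\top}(T-s)}\,ds$, and then check that $\Sigma_{T\mid t}=\Sigma-e^{-\Theta(T-t)}\Sigma e^{-\Theta^{\top}(T-t)}$: both sides vanish at $t=T$, and using that $\Theta$ commutes with $e^{-\Theta(T-t)}$ together with the Lyapunov identity $\Theta\Sigma+\Sigma\Theta^{\top}=\Sigma^{X}$ shows they satisfy the same linear ODE in $t$. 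Because $\epsilon(0,T)\sim\mathcal{N}(0,\Omega)$ is independent of $\mathcal{F}_{T}$, this gives $Y(0,T)\mid X(t)=x\sim\mathcal{N}\!\big(m(t,x),V(t)\big)$ with $m(t,x)=P\mu+Pe^{-\Theta(T-t)}(x-\mu)$ and $V(t)=P\Sigma_{T\mid t}P^{\top}+\Omega$, which is exactly the matrix inverted in \eqref{eq:eta}; differentiating the log-Gaussian density then gives $\nabla_{x}\log h(t,x)=\big(Pe^{-\Theta(T-t)}\big)^{\top}V(t)^{-1}\big(y-m(t,x)\big)$. This is precisely the continuous-time Kalman-smoothing step alluded to in the introduction.

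It remains to rearrange the drift. From \eqref{eq:eta} one has $L^{X}\eta(t)=\Sigma^{X}e^{-\Theta^{\top}(T-t)}P^{\top}V(t)^{-1}$, so $\Sigma^{X}\nabla_{x}\log h(t,x)=L^{X}\eta(t)\big(y-m(t,x)\big)$. Writing $y-m(t,x)=(y-P\mu)-Pe^{-\Theta(T-t)}(x-\mu)$ and substituting into $\Theta(\mu-x)+L^{X}\eta(t)(y-m(t,x))$, I would collect the terms linear in $x$ into $\tilde\Theta(t)=\Theta+L^{X}\eta(t)Pe^{-\Theta(T-t)}$ and the rest into $\tilde\Theta(t)\mu+L^{X}\eta(t)(y-P\mu)$, so that the drift equals $\tilde\Theta(t)\mu+L^{X}\eta(t)(y-P\mu)-\tilde\Theta(t)x$. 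Provided $\tilde\Theta(t)$ is nonsingular, this is $\tilde\Theta(t)\big(\tilde\mu(t,y)-x\big)$ with $\tilde\mu(t,y)=\mu+\tilde\Theta(t)^{-1}L^{X}\eta(t)(y-P\mu)$, i.e.\ exactly \eqref{eq:dX^y}. Since $\tilde\Theta(t)=\Theta+\Sigma^{X}\big(Pe^{-\Theta(T-t)}\big)^{\top}V(t)^{-1}\big(Pe^{-\Theta(T-t)}\big)$ is $\Theta$ --- invertible by Assumption~\ref{ass:nondeg}(ii) --- plus $\Sigma^{X}$ times a positive semidefinite matrix, I would establish its nonsingularity on $[0,T)$ separately from Assumption~\ref{ass:nondeg}, e.g.\ by working in the coordinates in which the Lyapunov solution $\Sigma$ is the identity.

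I expect the main obstacle to be the rigorous execution of the conditioning step: since $\{Y(0,T)=y\}$ is $\mathbb{P}$-null, $\mathbb{Q}$ must be handled as a regular conditional probability and $W^{\mathbb{Q}}$ identified as a $\mathbb{Q}$-Brownian motion through an initial-enlargement/$h$-transform argument rather than a naive Girsanov change of measure on $\mathcal{F}_{T}$, and one must also pin down the nonsingularity of $\tilde\Theta(t)$. The Gaussian moment computations and the subsequent rearrangement of the drift are otherwise routine.
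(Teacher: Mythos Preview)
Your proposal is correct and arrives at the same formulas, but the route differs from the paper's. You invoke the Doob $h$-transform / initial-enlargement picture: set $h(t,x)$ equal to the conditional density of $Y(0,T)$ at $y$ given $X(t)=x$, note it is a positive martingale, and read off the Girsanov kernel $(L^{X})^{\top}\nabla_{x}\log h$ directly. The paper instead works ``by hand'': it fixes $dt>0$, computes $\mathbb{E}[X(t+dt)\mid X(t),Y(0,T)=y]$ and $\mathbb{V}[X(t+dt)\mid X(t),Y(0,T)=y]$ via the Gaussian conditioning formula (with a separate lemma for the cross-covariance $\Cov(X(t+dt),Y(0,T)\mid X(t))$), lets $dt\to 0$ to identify drift and volatility, and then verifies with L\'evy's characterization that $X(t)-\int_{0}^{t}\text{drift}\,ds$ is an $\mathcal{F}^{Y}$-Brownian motion under $\mathbb{Q}$.

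What each buys: your approach is more conceptual and lines up exactly with the change-of-measure discussion the paper gives \emph{after} stating the proposition (equations \eqref{eq:change_of_measure}--\eqref{eq:Girsanov}); indeed your $(L^{X})^{\top}\nabla_{x}\log h$ is precisely the paper's $k(t)=\eta(t)(y-P\mathbb{E}[X(T)\mid\mathcal{F}_{t}])$. The paper's approach is more elementary in its toolkit --- it never appeals to an abstract initial-enlargement theorem, only to Gaussian algebra and L\'evy --- which sidesteps the very rigour issue you flag about conditioning on a null set. Your acknowledgment that the $h$-transform step needs a careful regular-conditional-probability argument is exactly right; the paper avoids that burden at the cost of a longer direct computation. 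The nonsingularity of $\tilde\Theta(t)$ is not argued in the paper either; it is simply used.
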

The proof is given in Appendix \ref{App:Proof_3.1}.


Equation \eqref{eq:dX^y} shows that after conditioning on the expert views, the risk factors \(X^y\) retain a mean-reverting structure. However, the dynamics are altered: the mean-reversion rate \(\tilde{\Theta}(t)\) and the long-term mean \(\tilde{\mu}(t,y)\) now depend on time and the views \(y\). Specifically, the mean is adjusted based on the ``innovation" term \(y - P\mu\); if the views \(y\) deviates from the prior long-run expectation \(P \mu\), then the factor's trajectory is adjusted accordingly.

\paragraph{Forward-looking Views and Change of Measure}
Given Figure \ref{fig:figure1}, \( Y(0,T) \) is a noisy observation of \( X(T) \). The distribution of \( X(t) \) given \( Y(0,T) = y \) is then similar to the Kalman smoother. Observing the views \( Y(0,T) = y \) modifies the investor's probability measure from \( \mathbb{P} \) to \( \mathbb{Q} = \mathbb{P}(\,\cdot \, \mid Y(0,T) = y) \), and hence modifies the investor's model of the risky factors from \eqref{eq:dX} to \eqref{eq:dX^y}. To formalize this, we can express the conditional dynamics \eqref{eq:dX^y} as an adjustment to the original drift
\begin{equation}
\label{eq:dX^y_2}
    dX^y(t) = \left(\Theta (\mu - X^y(t)) + L^X \eta(t) (y - P \mathbb{E}[X(T) \, | \, \mathcal{F}_t]) \right) dt + L^X dW^{\mathbb{Q}}(t),
\end{equation}
where 
\begin{equation*}
    \mathbb{E}[X(T) \,|\, \mathcal{F}_t] =  (I_d - e^{-\Theta(T-t)})\mu + e^{-\Theta (T-t)} X^y(t)
\end{equation*}
is the prior expectation of the terminal factors. The adjustment term \[    L^X \eta(t) (y - P \mathbb{E}[X(T) | X^y(t)])
\] is proportional to the difference between the expert view \(y\) and the investor's expectation of the view, scaled by the precision matrix \(L^X \eta(t)\).

It follows by Girsanov's Theorem (see \cite{GaussianBridges, Abdelhakmi2025_BL} for a discussion on the change of measure induced by forward-looking views in the case of a simple Brownian motion) that the relationship between the measures is defined by the Radon-Nikodym derivative
\begin{equation}
\label{eq:change_of_measure}
    \frac{d\mathbb{Q}}{d\mathbb{P}} = \exp\big(\int_0^T k(s)   dW(s) - \frac{1}{2}\int_0^T ||k(s)||^2 ds  \big),
\end{equation}
where
\begin{equation}
\label{eq:Radon-Nykodyn}
    k(t) = \eta(t) \left(y - P \mathbb{E}[X(T) | X(t)]\right), \quad t \in [0,T).
\end{equation}
We show in Appendix \ref{App:Novikov} that \(\mathbb{E}_\mathbb{P}[\frac{d\mathbb Q}{d \mathbb P}] = 1\). 
This implies that the Brownian motion $W(t)$ under the original measure can be expressed in the augmented filtration $\mathcal{F}_t^Y$ as
\begin{equation}
\label{eq:Girsanov}
\begin{split}
           dW(t) &= dW^\mathbb{Q}(t) + k(t) dt\\
           &= dW^\mathbb{Q}(t) +\eta(t) \left(y - P \mathbb{E}[X(T) \,|\, \mathcal{F}_t]\right) dt.
\end{split}
\end{equation}
The drift adjustment \(k(t)\) quantifies the impact of views on the noise dynamics. This change of measure framework directly impacts the asset price dynamics, as they are driven by the same Brownian motion \(W(t)\). The following corollary gives the conditional dynamics of the price process \(S^y(t)\).\footnote{The dynamics of $S^y(t)$ can also be derived by following the same steps as in the proof of Proposition \ref{prop:X^y}.}

\begin{corollary}
\label{corr:S^y}
    Suppose that the price process satisfies \eqref{eq:PricesVec} and views $Y(0,T)$ satisfy \eqref{eq:Y_view}. Conditional on $Y(0,T) = y$, the price process $S^y(t)$ has dynamics
    \begin{equation}
    \label{eq:dS^y}
        dS^y(t) = D(S^y(t))\left(\left(\tilde{\alpha}(t,y) + \tilde{\beta}(t) X^y(t) \right)dt + L^S dW^\mathbb{Q}(t)\right)
    \end{equation}
    where the drift \(\tilde{\alpha}(t,y) + \tilde{\beta}(t) X^y(t)\) is now an affine function of the risky factors $X^y(t)$ and the views $y$ with time-dependent coefficients
    \begin{equation}
    \label{eq:coeff_mu^S}
        \begin{cases}
            &\tilde{\alpha}(t,y) = \alpha + L^S \eta(t) \left( y -  P (I_d-e^{-\Theta (T-t)})\mu\right)  \\
            &\tilde{\beta}(t) = \beta -  L^S \eta(t) P e^{-\Theta (T-t)}
        \end{cases}
    \end{equation}
    and $\eta(t)$ is given by \eqref{eq:eta}.
\end{corollary}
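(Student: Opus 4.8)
\textbf{Proof strategy for Corollary~\ref{corr:S^y}.} The plan is to reuse the change of measure already identified in \eqref{eq:change_of_measure}--\eqref{eq:Girsanov} rather than redo the conditioning from scratch. Conditioning on $Y(0,T)=y$ means passing to $\mathbb{Q}=\mathbb{P}(\,\cdot\mid Y(0,T)=y)$, and because the price process \eqref{eq:PricesVec} and the factor process \eqref{eq:dX} are driven by the \emph{same} Brownian motion $W$, the single Radon--Nikodym derivative \eqref{eq:change_of_measure} -- whose Novikov-type integrability is verified in Appendix \ref{App:Novikov} -- governs both. Girsanov's theorem then gives that $W^\mathbb{Q}$ defined by \eqref{eq:Girsanov} is an $N'$-dimensional $\mathbb{Q}$-Brownian motion adapted to $\mathcal{F}_t^Y$, that the diffusion coefficient is unaffected (an equivalent change of measure alters only the finite-variation part, not the quadratic variation), and that the $\mathbb{P}$-drift of $S$ acquires the additional term $L^S k(t)$.

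Concretely, I would proceed as follows. (i) Write \eqref{eq:PricesVec} for $S$, substitute $dW(t)=dW^\mathbb{Q}(t)+k(t)\,dt$ from \eqref{eq:Girsanov}, and replace $S$ by $S^y$ and $X$ by $X^y$ (their realisations under $\mathbb{Q}$), obtaining
\[
dS^y(t)=D(S^y(t))\Big(\big(\alpha+\beta X^y(t)+L^S k(t)\big)\,dt+L^S dW^\mathbb{Q}(t)\Big).
\]
(ii) Insert $k(t)=\eta(t)\big(y-P\,\mathbb{E}[X(T)\mid\mathcal{F}_t]\big)$ from \eqref{eq:Radon-Nykodyn} together with the prior terminal expectation $\mathbb{E}[X(T)\mid\mathcal{F}_t]=(I_d-e^{-\Theta(T-t)})\mu+e^{-\Theta(T-t)}X^y(t)$. (iii) Expand $L^S k(t)=L^S\eta(t)\big(y-P(I_d-e^{-\Theta(T-t)})\mu\big)-L^S\eta(t)P e^{-\Theta(T-t)}X^y(t)$, add $\alpha+\beta X^y(t)$, and collect the terms that are constant in $X^y(t)$ into $\tilde\alpha(t,y)=\alpha+L^S\eta(t)(y-P(I_d-e^{-\Theta(T-t)})\mu)$ and the coefficient of $X^y(t)$ into $\tilde\beta(t)=\beta-L^S\eta(t)P e^{-\Theta(T-t)}$. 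These are exactly \eqref{eq:coeff_mu^S}, so the drift is affine in $X^y(t)$ and $y$, and \eqref{eq:dS^y} follows (on $[0,T)$, with the continuous extension to $t=T$ handled as for $X^y$).

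Since the substantive work -- the explicit form of $\eta(t)$, the measure change, and its integrability -- was already carried out for Proposition \ref{prop:X^y}, there is essentially no new obstacle here beyond careful algebraic bookkeeping in step (iii). The one point that merits an explicit line of justification is that $S^y$ remains a conditional It\^o diffusion with the \emph{same} volatility $L^S$: this holds because $\mathbb{Q}\sim\mathbb{P}$ on $\mathcal{F}_t$ for $t<T$, so the quadratic-variation term $D(S^y)L^S(L^S)^\top D(S^y)\,dt$ is invariant and only the drift is shifted by $k(t)$. As noted in the footnote, an alternative is to rederive \eqref{eq:dS^y} by repeating the conditioning/Kalman-smoothing computation of Proposition \ref{prop:X^y} for the augmented process $(X,\log S)$, which is affine in the driving Brownian integrals; this yields the same expressions but is considerably longer.
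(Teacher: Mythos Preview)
Your proposal is correct and follows essentially the same route as the paper: substitute the Girsanov relation $dW(t)=dW^{\mathbb Q}(t)+k(t)\,dt$ from \eqref{eq:Girsanov} into the price SDE \eqref{eq:PricesVec}, expand $k(t)$ using the prior expectation of $X(T)$, and collect the constant and $X^y(t)$-linear terms to read off $\tilde\alpha(t,y)$ and $\tilde\beta(t)$. The paper's proof is precisely this computation, and it likewise remarks that the alternative is to repeat the Kalman-smoothing argument of Proposition~\ref{prop:X^y}.
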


From \eqref{eq:dS^y}, we see that forward-looking  views on factors influence the asset price process \(S\) in two ways. First, they modify the model's coefficients, transforming the constant intercept \(\alpha\) and coefficients \(\beta\) into time-varying functions \(\tilde{\alpha}(t,y)\) and \(\tilde{\beta}(t)\). Second, the view \(y\) defines a new covariate in the drift. The conditional price drift remains affine in the state variables \(x\) and the view \(y\).

\begin{remark} When the noise driving the risk factors \(X\) and asset prices \(S\) are uncorrelated (i.e., \(L^S (L^X)^\top = 0\)), the coefficients in \eqref{eq:coeff_mu^S} remain unchanged: \(\tilde{\alpha}(t,y) = \alpha \) and \(\tilde{\beta}(t) = \beta\). In this case, \(S^y(t)\) follows the original model structure \eqref{eq:PricesVec} and views change the factor dynamics but not the dependence structure of asset prices on those factors. In Section \ref{sec5}, we show that in this case, the investor's optimal policy is not affected by views.
\end{remark}

\begin{remark}
   The framework extends to general price dynamics. If asset prices satisfy
     \[
    dS(t)
      = b\bigl(t,S(t),X(t)\bigr)\,dt
      + \Xi\bigl(t,S(t),X(t)\bigr)\,dW(t)
  \]
    the conditional dynamics can be derived directly from \eqref{eq:Girsanov}
    \[
    dS^y(t)
      = \left(b\bigl(t,S^y(t),X^y(t)\bigr)
             + \Xi\bigl(t,S^y(t),X^y(t)\bigr)\,k(t)\right)\,dt
      + \Xi\bigl(t,S^y(t),X^y(t)\bigr)\,dW^{\mathbb Q}(t)
  \]
    where \(k(t)\) is the drift adjustment from \eqref{eq:Girsanov}.
\end{remark}

While the conditional dynamics \eqref{eq:dX^y} and \eqref{eq:dS^y} provide the complete model needed to derive an optimal allocation strategy, we now delve deeper into the structure of these conditional processes. To build intuition, the next section introduces the \emph{Mean-reverting Bridge (MrB)}, a novel stochastic process that provides insights into how forward-looking information shapes the path of mean-reverting factors. Following that, in Section \ref{sec5}, we will formulate and solve the investor's dynamic portfolio choice problem.

\section{Mean-Reverting Bridge (MrB)}
\label{sec4}

The concept of a stochastic process conditioned on a future value is well-studied for Brownian motion where it is known as a \emph{Brownian bridge} (e.g., \cite{BrownianBridgesPINSKY, JeanBlanc}). This idea was extended by \cite{Abdelhakmi2025_BL} to accommodate noisy information about the future value of Brownian motion. Here, we generalize this concept to mean-reverting processes given by~\eqref{eq:dX}.

We introduce the term \emph{Mean-Reverting Bridge (MrB)} to define a new general class of processes that extends the classical Ornstein-Uhlenbeck (OU) bridge in two critical directions: The observation of the terminal value is noisy, and the underlying mean-reverting process can be multi-dimensional. We proceed in three steps: (i)  we define and characterize the one-dimensional MrB with a noise-free future view in Section \ref{sec41}, (ii) extend this to noisy views in Section \ref{sec42}, and (iii) generalize to the multi-dimensional setting in Section \ref{sec43}. This progression provides a clear, intuitive interpretation of the conditional dynamics derived in Proposition~\ref{prop:X^y}.

\subsection{The MrB in One Dimension}
\label{sec41}


The one-dimensional version of the MrB, when conditioned on a known future value, corresponds to the classical Ornstein-Uhlenbeck (OU) bridge. This process  and its properties are well studied in the literature (e.g., \cite{OUB2008}, \cite{OUB2013}, \cite{OUB2017}). We summarize them here to establish a foundation for the extension to noisy views and multiple dimensions that follow.


Consider a standard mean-reverting process (an OU process with zero mean)
\begin{equation}
    \label{eq:OU}
    \begin{cases}
                dX(t) &= - \theta X(t) dt + dW(t)\\
                X(0) &= a,
    \end{cases}
    \end{equation}
where $\theta > 0$ is the mean-reversion rate and $W(t)$ is a standard Brownian motion. The solution is
\(X(t) = e^{- \theta t} a + \int_0^t e^{-\theta (t-s)} dW(s)\). We now condition this process on its value at a future time \(T\).

\begin{definition}[Mean-Reverting Bridge]
\label{def:MrB}
    Let $X(t) \in \mathbb{R}$ be a standard mean-reverting process satisfying \eqref{eq:OU}. Then the process $\{B(t)=(X(t)|X(T)=y)\}$ is called a mean-reverting Bridge (MrB) from $a$ to $y$ with mean-reversion rate $\theta$ and hitting time $T$.
\end{definition}

An MrB constrains a mean-reverting process to start at $a$ and terminate $y$ at time $T$, effectively “pinning” the process at both ends of the interval $[0, T]$. The next proposition summarizes the properties of this process. The proof is detailed in Appendix \ref{Appendix:Sec4}.


\begin{proposition}
\label{Prop:MrB_properties}
    A stochastic process $B(t) \in \mathbb{R}$ is a mean-reverting bridge (MrB) from $a$ to $y$ with rate $\theta > 0 $ and hitting time $T > 0$ if  and only if
   \begin{enumerate}
    \item $B(0) = a$ and $B(T) = y$ with probability 1.
    \item It is a Gaussian process.
    \item Its expectation is $\mathbb{E}[B(t)] = e^{-\theta t} a + \frac{e^{-\theta(T-t)} - e^{-\theta(T+t)}}{1 - e^{-2\theta T}}(y - e^{-\theta T} a)$.
    \item The covariance $\Cov(B(t), B(s)) = \frac{1 - e^{-2\theta(T-t)}}{1 - e^{-2\theta T}} \Cov(X(t), X(s))$ for $s \le t \le T$.
    \item Its sample paths $t \to B(t)$ are continuous on $[0,T]$ with probability 1.
\end{enumerate}
Furthermore, the MrB is the unique solution to the SDE
\begin{equation}
\label{eq:sde_1dmrb}
\begin{cases}
    dB(t) &= \tilde{\theta}(t)(\tilde{\mu}(t,y) - B(t))dt + dW^{\mathbb{Q}}(t) \\
    B(0) &= a\sout{}
\end{cases}
\end{equation}
where $W^\mathbb{Q}$ is a Brownian motion under the conditional measure $\mathbb{Q} = \mathbb{P}(\cdot | X(T)=y)$ and
\begin{equation}
\label{eq:coeff_MrB}
\tilde{\theta}(t)
\;:=\;
\frac{1+e^{-2\theta\,(T-t)}}{1-e^{-2\theta\,(T-t)}}\, \theta
\;=\;
\coth\bigl(\theta\,(T-t)\bigr)\, \theta,
\qquad
\tilde{\mu}(t,y)
\;:=\;
\frac{2\,e^{-\theta\,(T-t)}}{1+e^{-2\theta\,(T-t)}}\,y
\;=\;
\sech\bigl(\theta\,(T-t)\bigr)\, y.
\end{equation}
\end{proposition}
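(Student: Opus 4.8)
The plan is to exploit the fact that the driving process \(X\) in \eqref{eq:OU} is Gaussian, so that conditioning on the single linear functional \(X(T)\) leaves it Gaussian; the whole list (1)--(5) then reduces to a conditional-moment computation, a path-continuity argument, and the identification of the SDE. The ``if and only if'' is then essentially automatic once we read ``\(B\) is an MrB'' as ``\(B\) has the law, as a process, of \((X(\cdot)\mid X(T)=y)\)'': a continuous Gaussian process is determined by its mean and covariance functions, so property (2) together with (3)--(4) forces the finite-dimensional distributions of \(B\) to match those of \((X(\cdot)\mid X(T)=y)\), while (5) selects the continuous version; conversely an MrB manifestly has all five properties by the computation below. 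So the real content is (i) the moment computation, (ii) the SDE derivation, and (iii) the behaviour at the singular endpoint \(t=T\).

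For the moments I would start from the explicit solution \(X(t)=e^{-\theta t}a+\int_0^t e^{-\theta(t-s)}\,dW(s)\), which gives \(\mathbb E[X(t)]=e^{-\theta t}a\) and \(\Cov(X(t),X(s))=\frac{e^{-\theta(t-s)}-e^{-\theta(t+s)}}{2\theta}\) for \(s\le t\); in particular \(\mathrm{Var}(X(T))=\frac{1-e^{-2\theta T}}{2\theta}\) and \(\Cov(X(t),X(T))=\frac{e^{-\theta(T-t)}-e^{-\theta(T+t)}}{2\theta}\). Applying the Gaussian conditioning formulas \(\mathbb E[X(t)\mid X(T)=y]=\mathbb E[X(t)]+\tfrac{\Cov(X(t),X(T))}{\mathrm{Var}(X(T))}\bigl(y-\mathbb E[X(T)]\bigr)\) and \(\Cov(B(t),B(s))=\Cov(X(t),X(s))-\tfrac{\Cov(X(t),X(T))\,\Cov(X(s),X(T))}{\mathrm{Var}(X(T))}\), then simplifying (the factorization \(e^{-\theta(T-t)}-e^{-\theta(T+t)}=e^{-\theta T}(e^{\theta t}-e^{-\theta t})\) does most of the work), yields exactly the expressions in (3) and (4). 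Property (2) is inherited Gaussianity, and evaluating (3)--(4) at \(t=0\) and \(t=T\) gives zero variance with mean \(a\) and \(y\) respectively, hence (1).

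For the SDE \eqref{eq:sde_1dmrb} the cleanest route is Doob's \(h\)-transform: \(X\) conditioned to hit \(y\) at \(T\) is a diffusion with the same unit diffusion coefficient and drift \(-\theta x+\partial_x\log p(t,x;T,y)\), where \(p(t,x;T,y)\) is the OU transition density, Gaussian in \(y\) with mean \(e^{-\theta(T-t)}x\) and variance \(\tfrac{1-e^{-2\theta(T-t)}}{2\theta}\). A one-line differentiation gives \(\partial_x\log p(t,x;T,y)=\tfrac{2\theta e^{-\theta(T-t)}}{1-e^{-2\theta(T-t)}}\,y-\tfrac{2\theta e^{-2\theta(T-t)}}{1-e^{-2\theta(T-t)}}\,x\); collecting the \(x\)-terms gives coefficient \(\theta\tfrac{1+e^{-2\theta(T-t)}}{1-e^{-2\theta(T-t)}}=\theta\coth(\theta(T-t))=\tilde\theta(t)\), and the \(y\)-term equals \(\tilde\theta(t)\tilde\mu(t,y)\) since \(\tfrac{2\theta e^{-\theta(T-t)}}{1-e^{-2\theta(T-t)}}=\tfrac{\theta}{\sinh(\theta(T-t))}=\theta\coth(\theta(T-t))\sech(\theta(T-t))\), matching \eqref{eq:coeff_MrB}; equivalently this is the degenerate (noise-free) \(d=1\), \(\mu=0\) specialization of Proposition~\ref{prop:X^y}. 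That \(W^{\mathbb Q}\) is a \(\mathbb Q\)-Brownian motion follows from Girsanov's theorem, the relevant Radon--Nikodym density on \([0,T)\) being the positive \(\mathbb P\)-martingale \(p(t,X(t);T,y)/p(0,a;T,y)\) (cf. the change-of-measure discussion of Section~\ref{sec3} and \cite{Abdelhakmi2025_BL, GaussianBridges}). Existence and uniqueness on \([0,T)\) are standard since the SDE is linear with coefficients continuous there; the explicit solution is \(B(t)=\Phi(t)a+\Phi(t)\int_0^t\Phi(s)^{-1}\tilde\theta(s)\tilde\mu(s,y)\,ds+\Phi(t)\int_0^t\Phi(s)^{-1}\,dW^{\mathbb Q}(s)\) with \(\Phi(t)=\exp\!\bigl(-\int_0^t\tilde\theta(u)\,du\bigr)=\tfrac{\sinh(\theta(T-t))}{\sinh(\theta T)}\), and one can independently verify its mean and covariance reproduce (3)--(4), so by uniqueness of a Gaussian law it equals the MrB.

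The main obstacle is the singular endpoint: \(\tilde\theta(t)=\theta\coth(\theta(T-t))\to\infty\) as \(t\uparrow T\), so \eqref{eq:sde_1dmrb} is a bridge-type SDE whose well-posedness on the closed interval \([0,T]\) is not covered by Lipschitz theory. I would handle this as for the classical Brownian bridge: solve uniquely on each \([0,T-\varepsilon]\), then use \(\Phi(T)=0\) together with a Doob/\(L^2\) maximal estimate on the stochastic-integral term (or the Kolmogorov continuity criterion applied to the Gaussian process with covariance (4), whose variance vanishes linearly at \(t=T\)) to conclude that \(B(t)\to y\) almost surely with continuous paths, which delivers both property (5) and the terminal pinning in (1). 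The residual work — the \(\coth\)/\(\sech\) trigonometric rewritings and checking the explicit solution's moments against (3)--(4) — is routine and I would only sketch it.
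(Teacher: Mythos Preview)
Your proposal is correct and, for items (1)--(5), follows exactly the paper's route: both of you compute the conditional mean and covariance via the standard Gaussian conditioning formulas applied to the explicit OU solution, and both deduce Gaussianity and path continuity by inheritance from \(X\).

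For the SDE the two arguments diverge slightly. The paper simply declares \eqref{eq:sde_1dmrb} to be the one-dimensional, noise-free (\(d=K=1\), \(\mu=0\), \(L^X=1\), \(P=1\), \(\Omega=0\)) specialization of Proposition~\ref{prop:X^y} and reads off \(\tilde\theta(t)\), \(\tilde\mu(t,y)\) from \eqref{eq:eta}. Your primary route---Doob's \(h\)-transform, differentiating \(\log p(t,x;T,y)\)---is self-contained and avoids the mild awkwardness that Proposition~\ref{prop:X^y} is proved under Assumption~\ref{ass:nondeg}(iii) (\(\Omega\succ0\)), so the \(\Omega=0\) specialization is, strictly speaking, a limiting case rather than a direct instance. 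You also note the equivalence with the paper's route, so nothing is lost. Beyond that, you treat two points more carefully than the paper does: the ``only if'' direction (which you reduce to uniqueness of continuous Gaussian laws given mean and covariance) and the singular endpoint \(t\uparrow T\), where \(\tilde\theta(t)\to\infty\) and Lipschitz well-posedness fails; the paper is silent on both.
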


 The time-varying reversion rate \(\tilde{\theta}(t)\) increases as \(t \to T\), pulling the process ever more strongly towards its time-varying mean \(\tilde{\mu}(t,y)\). Simultaneously, \(\tilde{\mu}(t,y)\) converges to the target value \(y\). This dual effect ensures the bridge hits its target \(y\) at time \(T\) precisely, as illustrated in Figure \ref{fig:Standard_MrB}. When \(\theta \to 0\), the dynamics reduce to the standard Brownian bridge SDE. The mean-reverting bridge with noise-free view described in Proposition \ref{Prop:MrB_properties} is a special case of the general conditional factor dynamics in Proposition~\ref{prop:X^y}.


 \begin{figure}[ht]
 \begin{center}
       \includegraphics[width= 0.8 \textwidth]{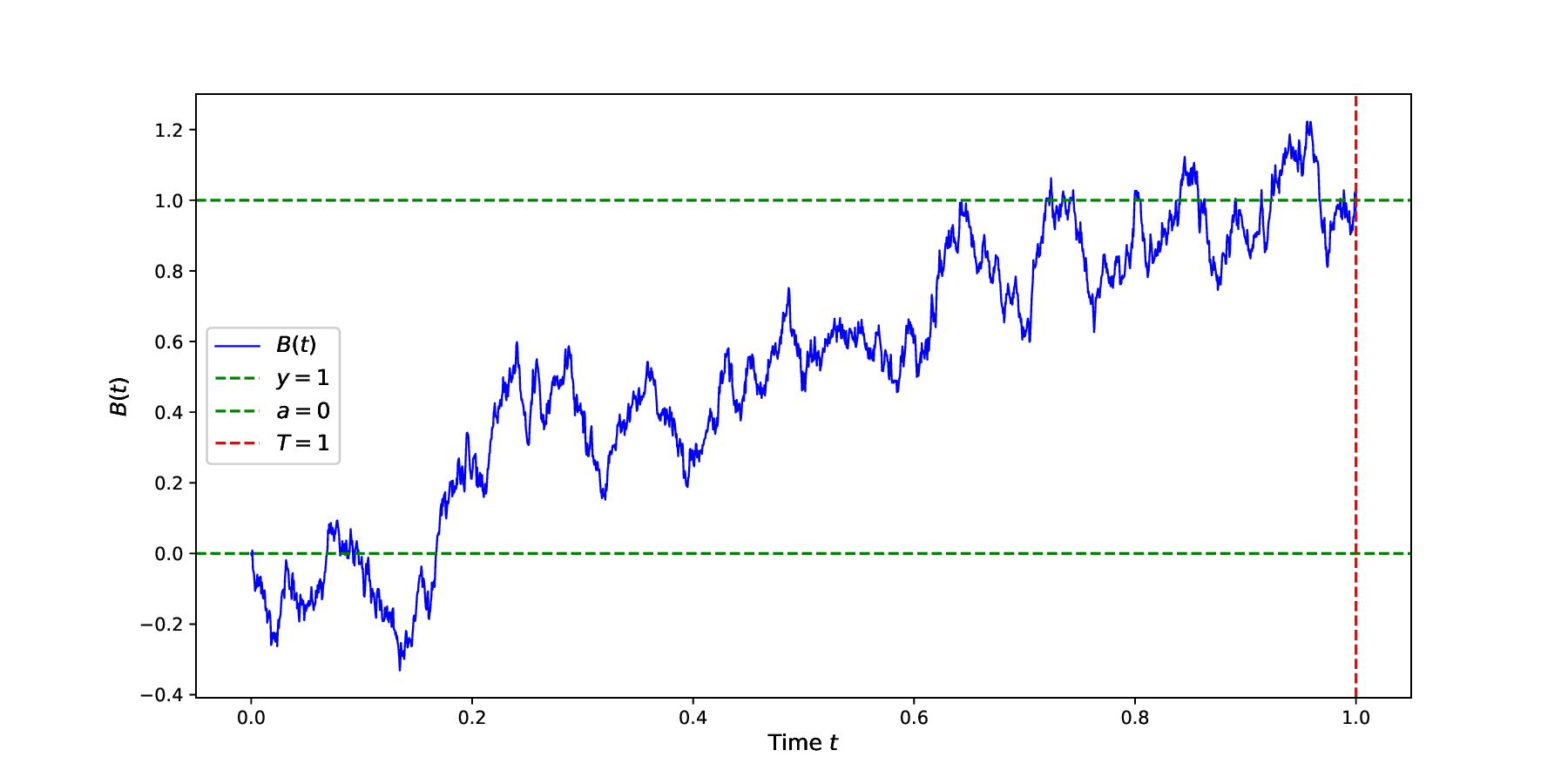}
      \caption{Simulation of the MrB process with initial condition  $a = 0$, terminal value $y = 1$, hitting time $T = 1$, and mean-reversion rate $\theta = 0.5$. \textmd{The process converges gradually to its terminal point $y$, and it reaches this point at $T$ with probability $1$.}}
      \label{fig:Standard_MrB}
 \end{center}
\end{figure}



\subsection{One-dimensional MrB with a Noisy View}
\label{sec42}

 We now consider the case when the view is noisy. We show that the conditional process is still an MrB but  with a modified hitting time and terminal value that are determined by the statistical properties of the view.



\begin{proposition}
\label{prop:1d_MrB}
Let $X(t)$ be a mean-reverting process from \eqref{eq:OU}. At $t=0$, we observe a noisy view of its terminal state, $Y(0,T) = X(T) + \epsilon$, where the noise $\epsilon \sim \mathcal{N}(0, \omega^2 / (2\theta))$ is independent of $X(t)$. Then the conditional process $\{B(t) = (X(t) \,|\, Y(0,T)=y), t \in [0,T]\}$ is the restriction to \([0,T]\) of a mean-reverting bridge from initial value $a$ to a  target $\tilde{y}$ and hitting time $\tilde{T}\geq T$, where
\begin{align*}
    \tilde{T} &= T + \delta \quad (\text{new hitting time}) \\
    \tilde{y} &= e^{-\theta\delta} y \quad (\text{new target value}).
\end{align*}
The time extension $\delta = \frac{\ln(1 + \omega^2)}{2\theta} \ge 0$ is increasing in the noise in the view \(\omega^2\).
\end{proposition}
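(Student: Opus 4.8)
The plan is to realize the noisy-view conditioning as a \emph{noise-free} conditioning of the same OU process evaluated at a later time $T+\delta\ge T$, and then invoke the MrB characterization in Proposition~\ref{Prop:MrB_properties}. Fix a constant $\delta\ge 0$, to be pinned down below, and extend the driving Brownian motion $W$ — hence the solution of \eqref{eq:OU} — to the interval $[0,T+\delta]$. The OU semigroup identity gives
\begin{equation*}
    X(T+\delta) \;=\; e^{-\theta\delta}X(T) \;+\; \xi, \qquad \xi:=\int_T^{T+\delta} e^{-\theta(T+\delta-s)}\,dW(s),
\end{equation*}
where $\xi$ is mean-zero Gaussian with variance $\tfrac{1-e^{-2\theta\delta}}{2\theta}$ and is independent of $\mathcal F_T$, hence of the whole path $(X(t))_{t\in[0,T]}$.

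Next I would use that, since $\tilde y:=e^{-\theta\delta}y$, the event $\{X(T+\delta)=\tilde y\}$ coincides with $\{X(T)+e^{\theta\delta}\xi=y\}$; thus conditioning $(X(t))_{t\in[0,T]}$ on $X(T+\delta)=\tilde y$ is the same as conditioning it on $X(T)+\tilde\epsilon=y$ with $\tilde\epsilon:=e^{\theta\delta}\xi\sim\mathcal N\!\big(0,\tfrac{e^{2\theta\delta}-1}{2\theta}\big)$ independent of the path. Comparing with the view $Y(0,T)=X(T)+\epsilon$, $\epsilon\sim\mathcal N(0,\omega^2/(2\theta))$ independent of $X$, the pairs $\big((X(t))_{t\le T},X(T)+\tilde\epsilon\big)$ and $\big((X(t))_{t\le T},Y(0,T)\big)$ are jointly Gaussian with the same mean and with the same covariance exactly when $\operatorname{Var}(\tilde\epsilon)=\operatorname{Var}(\epsilon)$, i.e.\ $e^{2\theta\delta}=1+\omega^2$, i.e.\ $\delta=\tfrac{\ln(1+\omega^2)}{2\theta}$, which is nonnegative and strictly increasing in $\omega^2$. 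For this $\delta$ the two conditional laws of $(X(t))_{t\in[0,T]}$ given the respective observations equal to $y$ therefore coincide, because a Gaussian conditional law is determined by its first two moments.

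Hence $\{B(t)=(X(t)\mid Y(0,T)=y)\}_{t\in[0,T]}$ has the law of $\{(X(t)\mid X(T+\delta)=\tilde y)\}_{t\in[0,T]}$, which by Definition~\ref{def:MrB} is the restriction to $[0,T]$ of the mean-reverting bridge from $a$ to $\tilde y=e^{-\theta\delta}y$ with rate $\theta$ and hitting time $\tilde T=T+\delta$; its SDE and path properties follow from Proposition~\ref{Prop:MrB_properties}. As an independent check one can specialize Proposition~\ref{prop:X^y} to $d=N'=K=1$, $\mu=0$, $L^X=P=1$, $\Sigma=\tfrac1{2\theta}$, $\Omega=\tfrac{\omega^2}{2\theta}$, and verify that $\tilde\Theta(t)$ and $\tilde\mu(t,y)$ from \eqref{eq:eta}--\eqref{eq:dX^y} collapse to $\theta\coth(\theta(\tilde T-t))$ and $\sech(\theta(\tilde T-t))\,\tilde y$ of \eqref{eq:coeff_MrB} under $\tilde T=T+\delta$, $\tilde y=e^{-\theta\delta}y$, $e^{2\theta\delta}=1+\omega^2$. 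The one delicate step is the identification ``conditioning on the noisy view $=$ conditioning on the extended endpoint'': it hinges on the observation noise being Gaussian, mean-zero, and independent of the path, so that only a single scalar variance must be matched, and on joint Gaussianity reducing the conditioning to a first-two-moments computation. One should also assert equality of laws only on $[0,T]$ — we do not couple the full bridge on $[0,\tilde T]$ with anything external — and note that $\tilde T=T+\delta>0$ so the target bridge is well defined; the case $\omega^2=0$ degenerates to $\delta=0$ and recovers Proposition~\ref{Prop:MrB_properties}.
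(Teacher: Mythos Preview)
Your proof is correct and takes a genuinely different route from the paper's. The paper's argument works ``forward'': it computes the posterior $X(T)\mid Y(0,T)=y\sim\mathcal N(m_T,\tilde\sigma^2)$ via Bayesian updating, and then attempts to match $m_T$ and $\tilde\sigma^2$ to the corresponding quantities for an OU process pinned at the extended time $\tilde T$, finally appealing to the fact that Gaussian finite-dimensional distributions are determined by mean and covariance. Your argument works ``backward'': you extend the OU path to $[0,T+\delta]$, write $X(T+\delta)=e^{-\theta\delta}X(T)+\xi$ with $\xi\perp\mathcal F_T$, and observe that conditioning on $X(T+\delta)=e^{-\theta\delta}y$ is \emph{literally} conditioning on $X(T)+e^{\theta\delta}\xi=y$; one then only needs to match a single scalar variance, $\operatorname{Var}(e^{\theta\delta}\xi)=\omega^2/(2\theta)$, to conclude equality of the two joint Gaussian laws of $\big((X(t))_{t\le T},\text{observation}\big)$ and hence of the conditional laws.

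What your approach buys: the coupling is explicit and the argument is a one-line variance match, bypassing the posterior computation entirely; it also makes transparent why the result hinges on the noise being Gaussian, centered, and independent of the path (otherwise a single variance match would not suffice). What the paper's approach buys: it produces the posterior moments $m_T,\tilde\sigma^2$ along the way, which can be useful in their own right, and it more naturally foreshadows the multi-dimensional precision-matching condition of Theorem~\ref{thm:m-MrB}. Your closing SDE check against Proposition~\ref{prop:X^y} is a nice independent confirmation that the resulting coefficients agree with \eqref{eq:coeff_MrB} at the shifted horizon $\tilde T$.
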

The detailed proof of this proposition is given in Appendix \ref{Appendix:Proof_1d_MrB}.

This behavior highlights an important effect: Noise in the view \(\omega^2 > 0\) pushes the hitting time beyond the view's horizon \(T\) to a new time \(\tilde{T}\). The bridge now targets an adjusted value \(\tilde{y}\) at $\tilde{T}$, which is the original view \(y\) discounted back from the new hitting time \(\tilde{T}\) back to $T$. The greater the uncertainty \(\omega^2\), the larger the time extension \(\delta\), and the more the target \(\tilde{y}\) is discounted. If the view is certain (\(\omega^2 = 0\)), then \(\delta = 0\), and we recover the noise-free case. This is illustrated in Figure \ref{fig:Noisy_MrB}.


 \begin{figure}[ht]
 \begin{center}
       \includegraphics[width= 0.8\textwidth]{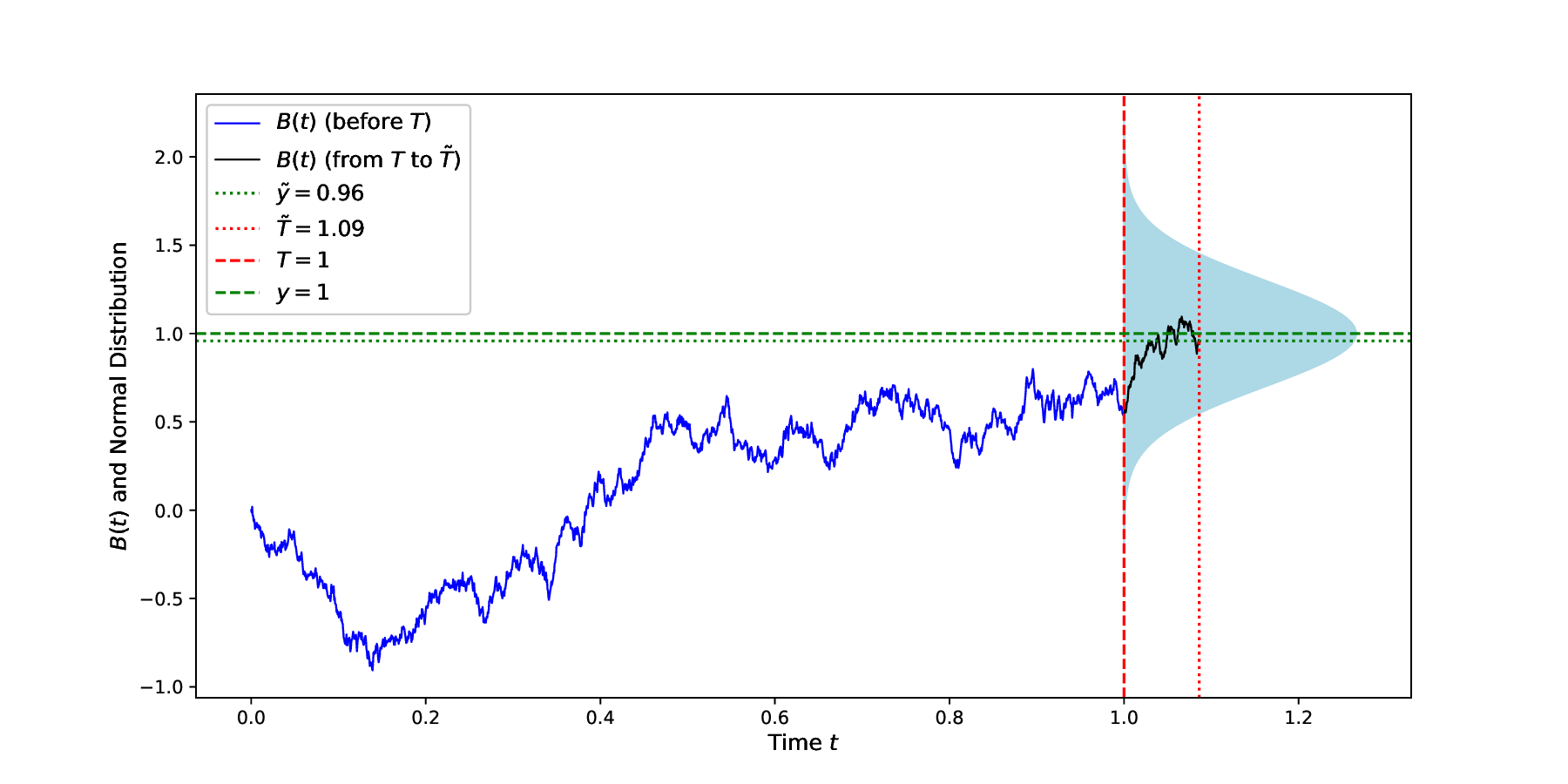}
     \caption{An MrB conditioned on a noisy view. Here $a=0, y=1, T=1, \theta=0.5, \omega=0.3$. The noise extends the hitting time to $\tilde{T} > T$ and discounts the target to $\tilde{y} < y$. The process (blue) evolves until $T=1$, with its future trajectory (black) converging to $\tilde{y}$ at time $\tilde{T}$.}
      \label{fig:Noisy_MrB}
 \end{center}
\end{figure}

\subsubsection{Application to the Factor Model}
This result connects directly to our asset pricing framework from Section \ref{sec3}. Consider a single risk factor \(X(t)\) with general mean reverting dynamics
\begin{equation*}
\begin{cases}
     dX(t) &= \theta (\mu - X(t)) dt + \sigma^X dW(t),\\
     X(0) &= a.
\end{cases}
\end{equation*}
This can be written as a scaled and shifted version of the standard process in \eqref{eq:OU}
\begin{equation*}
    \begin{split}
        X(t)  = (1 - e^{-\theta t})\mu + \sigma \bar{X}(t)
    \end{split}
\end{equation*}
where $\bar{X}$ is a mean-reverting process satisfying \eqref{eq:OU} with $\bar{X}(0) = a/\sigma$.
If an expert provides a noisy view $Y(0,T) = X(T) + \epsilon$ with $\epsilon \sim \mathcal{N}\big(0, \frac{\omega^2}{2\theta}\big)$,  observing \(X(T)\) with noise is equivalent to observing \(\bar{X}(T)\) with noise. From Proposition \ref{prop:1d_MrB}, we know that conditioning on this observation converts \(\bar{X}(t)\) into a MrB $B(t)$ from $a/\sigma$ with a terminal value
\begin{equation*}
    \tilde{y} = e^{-\theta \delta} \cdot \frac{y - (1-e^{-\theta T})\mu}{\sigma}
\end{equation*}
at the extended time $\tilde{T} = T + \delta$ where
\(\delta = \frac{1}{2\theta} \ln(1 + \omega^2 / \sigma^2)\). The conditional factor process \(X^y(t)\) is simply a scaled and shifted MrB
\begin{equation*}
    \begin{split}
        X^y(t) =(1 - e^{-\theta t})\mu + \sigma B(t).
    \end{split}
\end{equation*}
Thus, the MrB framework provides a complete and intuitive characterization of the factor's behavior under a forward-looking expert view.

\subsection{The MrB in a Multi-Dimensional Setting}
\label{sec43}
Extending the MrB concept to a multi-dimensional setting introduces significant complication. When factors are correlated, information about the future value of one factor provides information about the others. This creates complex cross-dependencies, and the intuitive ``pinning'' behavior of the one-dimensional bridge does not always carry over when views are noisy.

This can be observed even in the simpler case of conditioning one component of the factors on the terminal value of another component, as illustrated in the following example.

\begin{example}
Consider a two-dimensional mean-reverting process $X(t) \in \mathbb{R}^2$ with dynamics
\begin{eqnarray*}
\begin{cases}
dX_i(t) = -\theta_i X_i(t) dt + dW(t), & \quad i \in {1,2}, \\
X_i(0) = 0, & \quad i \in {1,2},
\end{cases}
\end{eqnarray*}
where both components $X_1(t)$ and $X_2(t)$ are driven by the same Brownian motion, but with distinct mean-reversion rates $\theta_1 \neq \theta_2$. Suppose we observe the noisy terminal state $Y(0,T) = X_2(T) + \epsilon$ with $\epsilon \sim \mathcal{N}(0, \omega^2)$, and define the conditional process
\(B(t) = \left(X(t)\,|\,Y(0,T)=y\right), \, t \in [0,T].
\)
It follows from Proposition \ref{prop:1d_MrB} that $B_2(t)$ is the restriction to $[0,T]$ of a MrB $(X_2(t) | X_2(\tilde{T}_2) = \tilde{y}_2)$ with terminal value $\tilde{y}_2 = e^{-\theta_2 \delta_2} y$ at $\tilde{T}_2 = T + \delta_2$, with $\delta_2 = \dfrac{1}{2\theta_2}\ln(1 + 2 \theta_2 \omega^2)$. However, one can verify there is no suitable terminal point \(\tilde{y}_1\) and hitting time \(\tilde{T}_1\) for $B_1(t)$ that satisfies \(B_1(t) \stackrel{(d)}{=} (X_1(t) \, | \, X_1(\tilde{T}_1) = \tilde{y}_1), \, t \in [0.T] \). 
\end{example}

This example shows that a formal definition is required to specify what constitutes a multidimensional bridge and its characteristics. 

\begin{definition}[Multidimensional MrB]
\label{def:m-MrB}
Let $X(t) \in \mathbb{R}^d$ be a mean-reverting process satisfying 
\begin{equation}
\label{eq:multi_OU}
        dX(t) = -\Theta X(t) dt + L^X dW(t),\quad
        X(0) = a.
\end{equation}
Let  $\tilde{T} = (\tilde T_1, \dots, \tilde T_d)^\top$ be a vector of hitting times. The process
\begin{equation*}
    B(t) = \left( X(t) \mid X_1(\tilde T_1)=y_1, \dots, X_d(\tilde T_d)=y_d \right)
\end{equation*}
is called a multidimensional-MrB (m-MrB) from $a$ to $y = (y_1, \dots, y_d)^\top$ with mean-reversion rate $\Theta$ and hitting times $\tilde{T}$.
\end{definition}

An m-MrB is then a process where each component is individually pinned to a specific target \(y_i\) at a specific time \(\tilde T_i\). The properties of such a process are a natural extension of the one-dimensional case (see Proposition \ref{prop:m-MrB_Properties} in the Appendix). 

We now establish conditions under which a noisy view \(Y(0,T) = PX(T) + \epsilon\) transforms a mean-reverting process into such an m-MrB. This extends  Proposition \ref{prop:1d_MrB} to the multi-dimensional setting and provides a characterization of the conditional factors dynamics derived in Proposition \ref{prop:X^y} in terms of m-MrB. The key insight is that this occurs when the \emph{information} from the noisy view is equivalent to the information from an exact, but time-extended, future observation. To formalize this, we first characterize the gain in precision of an m-MrB.

\begin{proposition}
\label{prop:precision_gain}
Let $\delta = (\delta_1,\cdots\delta_d)^\top$ where $\delta_i> 0$, \(\tilde{T} = T \, \mathbf{1}_d + \delta\) be a time extension, and    \begin{equation}
\label{eq:precision_gain1}
        \mathcal{P}(\delta; \Theta, \Sigma^X) :=  \mathbb{V}[X(T) \, | \, X(\tilde{T})]^{-1} - \mathbb{V}[X(T)\, | \, X(0)]^{-1} \in \mathbb{R}^{d \times d}.
    \end{equation}
    be the gain in the precision of $X(T)$ after conditioning on $X(\tilde{T})$.

    When \(X(t)\) satisfies \eqref{eq:multi_OU}, the gain in precision is 
    \begin{equation}\label{eq:precision_gain2}
    \mathcal{P}(\delta; \Theta, \Sigma^X) = F(\delta)^\top C(\delta)^{-1} F(\delta)\end{equation}
where \(
    C(\delta) := \mathbb{V}[X(\delta) \, | \, X(0)] \in \mathbb{R}^{d \times d}\) is the covariance of \(X\) at time \(\delta\) with elements  \[C(\delta)_{ij}
\;=\;\int_0^{\min\{\delta_i,\delta_j\}}
e_i^\top e^{-\Theta(\delta_i-u)}\,\Sigma^X\,e^{-\Theta^\top(\delta_j-u)}e_j\,du,
\]
and  \(F(\delta) \in \mathbb{R}^{d \times d}\) with \(i^{th}\) column \((F(\delta))_i = e^{-\Theta \delta_i}e_i\)
  where $e_i$ the $i^{th}$ canonical basis vector in $\mathbb{R}^d$.
\end{proposition}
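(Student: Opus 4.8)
The plan is to compute both conditional variances appearing in the definition \eqref{eq:precision_gain1} using the fact that $(X(T), X(\tilde T))$ is jointly Gaussian (as a linear functional of the Gaussian process $X$), and then simplify the difference of precision matrices via the Woodbury/Schur identity. First I would record the two basic moment formulas for the OU process \eqref{eq:multi_OU}: for $s \le u$, $X(u) \mid X(s)$ is Gaussian with mean $e^{-\Theta(u-s)}X(s)$ and with covariance $\int_0^{u-s} e^{-\Theta v}\Sigma^X e^{-\Theta^\top v}\,dv$, which by stationarity-of-increments-in-law equals $C(u-s)$ in the one-dimensional-time case and more generally gives the stated integral form for the cross-covariances of the vector $X(\tilde T)$ whose components live at different times $\tilde T_i = T + \delta_i$. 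In particular $\mathbb{V}[X(T)\mid X(0)] =: \Sigma_0$ is the covariance of $X(T)$ conditioned on the (known) start, and I would denote by $\Sigma_{T\tilde T}$ and $\Sigma_{\tilde T}$ the relevant cross- and auto-covariance blocks of $(X(T), X(\tilde T))$ given $X(0)$.

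The key structural observation is the Markov/tower property: conditioning jointly on $X(0)$ and on $X(\tilde T)$, and using that each $X_i(\tilde T_i) = X_i(T+\delta_i)$ is obtained from $X(T)$ by running the OU dynamics forward for time $\delta_i$, one gets $X_i(\tilde T_i) \mid X(T) = e_i^\top e^{-\Theta\delta_i} X(T) + (\text{indep.\ Gaussian noise})$, i.e. the measurement matrix is exactly $F(\delta)^\top$ with rows $e_i^\top e^{-\Theta\delta_i}$, and the measurement-noise covariance is exactly $C(\delta)$ with the stated entrywise integral (the overlap region $[0,\min\{\delta_i,\delta_j\}]$ arises because the forward increments of components $i$ and $j$ share the same Brownian driver only up to the shorter horizon). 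Then the standard Gaussian-conditioning (information-filter) update gives
\[
\mathbb{V}[X(T)\mid X(0), X(\tilde T)]^{-1} = \mathbb{V}[X(T)\mid X(0)]^{-1} + F(\delta)^\top C(\delta)^{-1} F(\delta),
\]
and since conditioning on $X(\tilde T)$ already determines $X(0)$-irrelevant information — more precisely, since $\mathbb{V}[X(T)\mid X(\tilde T)] = \mathbb{V}[X(T)\mid X(0),X(\tilde T)]$ because $X(0)$ is ancestral and $X(\tilde T)$ screens it off only in the reversed sense; here I would instead argue directly that both sides of \eqref{eq:precision_gain1} are computed under the unconditional (or $X(0)$-conditional, same thing up to the fixed mean) law and that the precision increment is exactly the added Fisher information $F^\top C^{-1} F$ — rearranging yields \eqref{eq:precision_gain2}.

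The main obstacle I anticipate is bookkeeping the different time horizons $\delta_i$ cleanly: the vector $X(\tilde T)$ is not $X$ evaluated at a single time, so the "measurement noise" $X(\tilde T) - F(\delta)^\top X(T)$ has a non-trivial correlation structure across components, and I need to verify carefully that its covariance is precisely the matrix $C(\delta)$ with entries $\int_0^{\min\{\delta_i,\delta_j\}} e_i^\top e^{-\Theta(\delta_i-u)}\Sigma^X e^{-\Theta^\top(\delta_j-u)} e_j\,du$. This follows by writing $X_i(\tilde T_i) - e_i^\top e^{-\Theta\delta_i}X(T) = e_i^\top \int_T^{T+\delta_i} e^{-\Theta(T+\delta_i - s)} L^X\,dW(s)$, substituting $u = s - T$, and applying the Itô isometry to a pair of such integrals with upper limits $\delta_i$ and $\delta_j$ — the integrand overlap truncates at $\min\{\delta_i,\delta_j\}$. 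A secondary point to handle with care is invertibility of $C(\delta)$ (needed for \eqref{eq:precision_gain2} to make sense), which follows from Assumption~\ref{ass:nondeg}(i) since $\Sigma^X \succ 0$ makes each diagonal block, hence the Gram-type matrix $C(\delta)$, positive definite when all $\delta_i > 0$; I would note this explicitly. Everything else is routine Gaussian linear algebra.
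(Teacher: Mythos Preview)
Your approach is correct and matches the paper's almost exactly: both recognise that $X(\tilde T)\mid X(T)$ is a linear-Gaussian observation of $X(T)$ with ``design matrix'' built from the rows $e_i^\top e^{-\Theta\delta_i}$ and noise covariance $C(\delta)$, and then invoke the Woodbury/information-filter identity to read off the precision increment. The paper phrases the Woodbury step as $\mathbb{V}[X_T]^{-1}\mathrm{cov}(X_T,X_{\tilde T})\,\mathbb{V}[X_{\tilde T}\mid X_T]^{-1}\,\mathrm{cov}(X_{\tilde T},X_T)\mathbb{V}[X_T]^{-1}$ and then identifies $\mathbb{V}[X_{\tilde T}\mid X_T]=C(\delta)$ and $\mathrm{cov}(X_{\tilde T},X_T)=F(\delta)^\top\mathbb{V}[X_T]$, which is exactly your information-filter update written out. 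Your It\^o-isometry verification of the $C(\delta)$ formula is the same computation the paper needs. The one place you hesitate unnecessarily is the $X(0)$ conditioning: since $X(0)=a$ is deterministic in \eqref{eq:multi_OU}, $\mathbb{V}[X(T)\mid X(\tilde T)]=\mathbb{V}[X(T)\mid X(0),X(\tilde T)]$ trivially, so there is no screening subtlety to resolve.
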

Several observations to note: (i) the expression of $\mathcal P(\delta; \Theta, \Sigma^X)$ does not depend on the actual realization of $X(\tilde{T})$, since \(X\) is Gaussian; (ii) and the gain in precision depends only on the magnitude of the extension $\delta$ because \(X\) is Markovian. The proof of this result is given in Appendix \ref{EC:Proof_PrecisionMatrices}.

The following result shows that the mean-reverting process conditional on noisy views is an m-MrB  if and only if the precision gained from the expert view, \(P^\top \Omega^{-1} P\), aligns with the gain in precision \eqref{eq:precision_gain2} from known $X(\tilde{T})$ for some time extension \(\delta\).
\begin{theorem}
\label{thm:m-MrB}
Let $X(t)$ be a mean-reverting process satisfying \eqref{eq:multi_OU} and $Y(0,T)$ be a noisy view satisfying \eqref{eq:Y_view}. The conditional process $B(t) := (X(t) \, | \, Y(0,T) = y)$ is an m-MrB if and only if there exists a time-extension vector $\delta \in (0, \infty)^d$ such that
\begin{equation}
\label{eq:alignment_condition}
    P^\top \Omega^{-1} P = \mathcal{P}(\delta; \Theta, \Sigma^X).
\end{equation}
Furthermore, when this alignment condition holds:
\begin{enumerate}
    \item The centered process $B(t) - \mathbb{E}[B(t)]$ is an m-MrB from $\mathbf{0}$ to $\mathbf{0}$ with hitting times $\tilde{T} = T \mathbf{1}_d + \delta$.
    \item If, in addition, $P$ has full column rank, then $B(t)$ is an m-MrB from $a$ to a unique target $\tilde{y} \in \mathbb{R}^d$ with hitting times $\tilde{T}$, where the target is given by
    \begin{equation*}
        \tilde{y} = C(\delta) F(\delta)^{-\top} P^\top\Omega^{-1} y.
    \end{equation*}
\end{enumerate}
\end{theorem}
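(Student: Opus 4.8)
The plan is to work entirely with Gaussian laws and reduce the whole statement to a comparison of precision matrices. Since $X(t)$ is Gaussian and the view $Y(0,T) = PX(T)+\epsilon$ is a linear-Gaussian observation of $X(T)$, the conditional process $B(t) = (X(t)\mid Y(0,T)=y)$ is Gaussian. A Gaussian process is determined by its mean and covariance functions, so $B(t)$ is an m-MrB (in the sense of Definition \ref{def:m-MrB}) if and only if its covariance function coincides with that of some process $(X(t)\mid X_1(\tilde T_1)=y_1,\dots,X_d(\tilde T_d)=y_d)$, and — for the mean, under the full-column-rank hypothesis in part 2 — its mean function coincides with the corresponding conditional mean for a suitable target $\tilde y$. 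The natural first step is therefore to write down both covariance functions explicitly.

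For the view-conditional process, standard Gaussian conditioning gives, for $s\le t$,
\[
\Cov_{\mathbb Q}(B(s),B(t)) = \Cov(X(s),X(t)) - \Cov(X(s),X(T))\,\big(P\mathbb V[X(T)]P^\top+\Omega\big)^{-1}P\,\Cov(X(T),X(t)),
\]
and the key algebraic identity I would use is the Woodbury/precision form: conditioning on $Y$ modifies the precision of $X(T)$ from $\mathbb V[X(T)]^{-1}$ to $\mathbb V[X(T)]^{-1}+P^\top\Omega^{-1}P$, and more generally the conditional covariance of the whole path can be expressed through the precision increment at the terminal time. Doing the same for the process pinned at times $\tilde T = T\mathbf 1_d+\delta$: conditioning on $X(\tilde T)$ changes the precision of $X(T)$ by exactly $\mathbb V[X(T)\mid X(\tilde T)]^{-1}-\mathbb V[X(T)]^{-1} = \mathcal P(\delta;\Theta,\Sigma^X)$, which is computed in Proposition \ref{prop:precision_gain}. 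Because $X$ is Markov, both conditionings act on the path $\{X(s):s\le T\}$ only through the conditional law of $X(T)$ — conditioning on $X(\tilde T_i)$ for $\tilde T_i > T$ factors through $X(T)$. Hence the two covariance functions on $[0,T]$ agree if and only if the two precision increments at time $T$ agree, i.e.\ $P^\top\Omega^{-1}P = \mathcal P(\delta;\Theta,\Sigma^X)$. This gives both directions of the ``if and only if'': necessity by reading off the precision increment from the covariance of a putative m-MrB, sufficiency by constructing $\tilde T$ from any $\delta$ solving \eqref{eq:alignment_condition}.

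For the remaining claims under the alignment condition: item 1 (the centered process) is immediate, since centering removes the mean and the covariance identification above is exactly the m-MrB covariance with hitting times $\tilde T = T\mathbf 1_d+\delta$ and zero endpoints. For item 2, under full column rank of $P$ the target is pinned down by matching means: the view-conditional mean of $X(T)$ is $\mathbb E[X(T)]+\mathbb V[X(T)]\,P^\top\Omega^{-1}(y-P\mathbb E[X(T)])$ modulo the Woodbury rearrangement, while the mean of the $\tilde T$-pinned process at time $T$ is an affine function of $\tilde y$ through the matrices $F(\delta)$ and $C(\delta)$ of Proposition \ref{prop:precision_gain}; equating the two and using that $F(\delta)$ is invertible (it is block-triangular with nonsingular diagonal, or simply $e^{-\Theta\delta_i}e_i$ spanning when $P$, hence the system, is non-degenerate) yields $\tilde y = C(\delta)F(\delta)^{-\top}P^\top\Omega^{-1}y$; uniqueness follows from invertibility of the map $\tilde y\mapsto$ (conditional mean). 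I would present the mean computation as a short lemma so that the endpoint values $B(0)=a$, $B_i(\tilde T_i)=\tilde y_i$ can be verified directly as a sanity check.

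The main obstacle is the covariance bookkeeping in the ``only if'' direction: one must show that \emph{no} choice of hitting times other than a common shift $\tilde T = T\mathbf 1_d+\delta$ can work, and that the precision-increment characterization is genuinely equivalent to equality of the full covariance \emph{functions} on $[0,T]$, not merely of $\mathbb V[X(T)\mid\cdot]$. Concretely, I need the fact that for a Markov Gaussian process the conditional covariance kernel $\Cov(X(s),X(t)\mid \mathcal G)$ for $s,t\le T$ depends on the conditioning $\sigma$-algebra $\mathcal G$ (generated either by $Y$ or by $X(\tilde T)$) only through the conditional law of the ``bridge pin'' $X(T)$ — this is a reverse-martingale / two-sided-Markov argument that should be isolated as a lemma. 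Once that reduction is in place, everything else is linear algebra: solving \eqref{eq:alignment_condition} for $\delta$ and matching means. I would also note in passing that existence of a solution $\delta$ is not claimed here (it is the subject of the earlier discussion on sufficient conditions); the theorem is a characterization conditional on such a $\delta$ existing.
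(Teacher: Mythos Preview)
Your proposal is correct and follows essentially the same route as the paper: reduce the comparison of Gaussian laws on $[0,T]$ to a comparison of precision increments at $X(T)$, using the Markov property to factor both conditionings through the terminal pin, then match means under the full-rank hypothesis via the $F(\delta), C(\delta)$ map. The paper carries out the covariance step by explicit computation (writing both kernels as $\Cov(X(t),X(s)) - V(t)e^{-\Theta^\top(T-t)}\bigl(V(T)^{-1}+\text{gain}\bigr)^{-1}e^{-\Theta(T-s)}V(s)$ and reading off the gain), which is exactly your Markov reduction made concrete.

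One place you are working harder than needed is the necessity direction. You worry about showing that equality of the full covariance \emph{functions} forces the precision match and about ruling out exotic hitting-time configurations. The paper dispatches this in one line: if $B$ is an m-MrB with hitting times $\tilde T = T\mathbf 1_d+\delta$, then in particular $\mathbb V[B(T)] = \mathbb V[X(T)\mid X(\tilde T)]$, and comparing with $\mathbb V[B(T)] = \mathbb V[X(T)\mid Y]$ (which you already know equals $(\mathbb V[X(T)]^{-1}+P^\top\Omega^{-1}P)^{-1}$ by Woodbury) gives the alignment condition immediately. No reverse-martingale lemma is needed for this direction; the delicate Markov argument is only doing work in sufficiency, where you must upgrade a single-time precision match to equality of the whole kernel.
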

\begin{proof}
See Appendix \ref{Appendix:secC4} for the full proof.
\end{proof}
To understand the condition \eqref{eq:alignment_condition}, notice that \[P^\top\Omega^{-1} P = \mathbb{V}[X(T) \, | \, Y(0,T) = y]^{-1} - \mathbb{V}[X(T)\, | \, X(0)]^{-1}\]
is the gain in precision of \(X(T)\) when observing the views \(Y(0,T) = y\). Thus, condition \eqref{eq:alignment_condition} requires that the investor gain the same amount of precision from the views \(Y(0,T) =y\) as she would have gained from knowing the true value of \(X\) at some future time \(\tilde{T}\). When \(P\) is not full rank, the views do not contain enough information to uniquely determine the target vector \(\tilde y\), which is why only the centered process \(B(t) - \mathbb{E}[B(t)]\) is guaranteed to be an m-MrB.

This alignment condition becomes clearer in specific cases. For instance, if the factors are dynamically independent (i.e., \(\Theta\) and \(\Sigma^X\) are diagonal), a bridge forms only if the view provides decoupled information about each factor, which requires \(P^\top \Omega^{-1} P\) to also be diagonal. 

In the single factor case \((d=1)\), the precision-gain operation simplifies to 
\[\mathcal{P}(\delta; \theta, \sigma_X) = \frac{2\theta}{\sigma_X^2}(e^{2\theta \delta} - 1)^{-1}.\]
with \(\sigma_X^2 = L^X (L^X)^\top\). When \(K\) views are given on a single factor \(X(t) \in \mathbb{R}\) with \(P = \mathbf{1}_K \in \mathbb{R}^{K \times 1}\) and \(\epsilon \sim \mathcal{N}\left(0, \Omega\right) \in \mathbb{R}^K\), then~\eqref{eq:alignment_condition} always holds and the conditional process \((X(t) \, | \, Y(0,T) = y)\) is a MrB with time extension
\[\delta = \frac{1}{2\theta} \ln\left(1 + (P^\top \Omega^{-1} P)^{-1} \frac{2 \theta}{\sigma_X^2}\right).\]
Since \(P\) has full column rank, the target value of the MrB implied by Theorem \ref{thm:m-MrB} is
\[\tilde{y} = (P^\top \Omega^{-1}P)^{-1} e^{-\theta \delta} P^\top \Omega^{-1} y.\]

\begin{remark}
    While the alignment condition \eqref{eq:alignment_condition} provides helpful intuition, it is \emph{not} required for solving the control problem. Regardless of whether \eqref{eq:alignment_condition} holds, the conditional process \(\{X(t) \, | \, Y(0,T) = y\}\) is well-defined and satisfies the SDE in Proposition \ref{prop:X^y}. This is sufficient for the portfolio choice results in Section \ref{sec5}.
\end{remark}

\section{Control Problem}
\label{sec5}
We now solve the dynamic portfolio problem for an investor seeking to maximize the expected utility of terminal wealth. The investor begins with access to expert views \(Y(0,T) = y\) and operates within the conditional market dynamics derived in Section \ref{sec3}. The asset prices and risk factors evolve according to 
\begin{equation*}
    \begin{split}
        dS^y(t) &= D(S^y(t))\left(\left(\tilde{\alpha}(t,y) + \tilde{\beta}(t) X^y(t) \right)dt + L^S dW^\mathbb{Q}(t)\right),\\
        dX^y(t) &= \tilde{\Theta}(t) \left(\tilde{\mu}(t,y) - X^y(t)\right)  dt   + L^X dW^\mathbb{Q}(t).
    \end{split}
\end{equation*}
Let $\pi_i(t)$ be the fraction of wealth invested in the $i^{th}$ risky asset at time $t$. This implies that $1 - \pi(t)^\top \mathbf{1}_N$ is the fraction of wealth invested in the risk-free asset with fixed return $r_f$. We assume that $\pi(t) \in \Pi$ where the class of admissible policies
\begin{equation*}
    \Pi = \left\{ \pi: [0,T] \to \mathbb{R}^N, \pi \text{ is adapted to } \{\mathcal{F}_t^Y\}_{t \in [0,T]}, \int_{0}^T |\pi(t)|^2 dt \leq \infty\right\}.
\end{equation*}

We denote by $Z(t)$ the wealth process and assume that the portfolio is self-financing, the investor's wealth thus satisfies
\begin{equation}
\label{eq:dZ}
    dZ(t) = Z(t)\bigg(r_f dt + \pi(t)^\top \big(\mu^S(t,X^y(t),y) - r_f \mathbf{1}_N\big) dt + \pi(t)^\top L^S dW^\mathbb{Q}(t)\bigg).
\end{equation}
The investor's objective is to maximize the expected isoelastic (CRRA) utility of terminal wealth 
\begin{equation*}
    U(Z) = \frac{z^{1-\gamma}}{1-\gamma}
\end{equation*}
where $\gamma$ is the coefficient of relative risk aversion. Her value function is
\begin{equation*}
    V(t,z,x) = \max_{\pi \in \Pi} \mathbb{E}_\mathbb{Q}\bigl[U(Z(T)) | Z(t) = z, X^y(t) = x\bigr].
\end{equation*}
Throughout the analysis, we assume the investor is risk-averse, with \(\gamma > 1\).

\subsection{Value Function and Optimal Policy}
\label{sec51}
The value function \(V(t,z,x)\) satisfies the Hamilton-Jacobi-Bellman (HJB) equation
\begin{equation}
\label{eq:HJB}
\begin{split}
    \max_{\pi} \Big\{&\frac{\partial V}{\partial t} +  z \left( r_f + \pi(t)^\top\left(\tilde{\alpha}(t,y) + \tilde{\beta}(t) x - r_f \mathbf{1}_N \right) \right) \nabla_z V
  +  \left(\tilde{\Theta}(t)(\tilde{\mu}(t,y) - x)\right)^\top \nabla_x V  \\ &   + \frac{1}{2} z^2 \pi(t)^\top \Sigma^S \pi(t)  \nabla^2_z V + \frac{1}{2} \Tr(\Sigma^X \nabla_x^2 V)  + z \pi(t)^\top \Sigma^{S,X} \nabla^2_{x,z} V\Big\} = 0
\end{split}
\end{equation}
with terminal condition \(V(T,z,x) = U(z)\). Here,  $\Sigma^{S,X} = L^S(L^X)^\top$ is the covariance matrix between asset and factor innovations. The first-order condition yields the optimal policy's structure
\begin{equation*}
    \pi^*(t) = \underbrace{- (\Sigma^S)^{-1} \frac{\nabla_z V}{ z \nabla_z^2 V} (\tilde{\alpha}(t,y) + \tilde{\beta}(t) x - r_f \mathbf{1}_N \big)}_{\text{Mean-Variance Holding}} \underbrace{-(\Sigma^S)^{-1}\Sigma^{S,X} \frac{ \nabla^2_{x,z}V}{ z \nabla_z^2 V}}_{\text{Hedging}}.
\end{equation*}
The policy consists of a \emph{mean-variance} component, which is optimal for a myopic mean-variance problem, and an \emph{intertemporal hedging demand}, which hedges changes in investment opportunities (i.e., movements in the factors \(X^y(t)\)); see \cite{ChackoViceira2005}. 

The investor's problem \eqref{eq:HJB} can be solved by standard HJB methods for CRRA utility (see, e.g., \cite{MERTON1971373, oksendal2003}). For \(\gamma > 1\), the value function is given by 
\begin{equation*}
    V(t,z,x) = \frac{z^{1-\gamma}}{1-\gamma}e^{g(t,x)}
\end{equation*}
where
\begin{equation*}
    g(t,x) = \frac{1}{2}x^\top A(t) x + x^\top b(t) + c(t).
\end{equation*}
The matrix $A(t) \in \mathbb{R}^{d \times d}$ is symmetric negative semi-definite and satisfies the Riccati equation
\begin{equation}
\label{eq:Riccati_views}
    \begin{cases}
        A'(t) &+ \frac{1-\gamma}{\gamma} \tilde{\beta}(t)^\top (\Sigma^S)^{-1} \tilde{\beta}(t) + A(t) \big(\Sigma^X + \frac{1-\gamma}{\gamma} (\Sigma^{S,X})^\top (\Sigma^S)^{-1} \Sigma^{S,X}\big) A(t) \\
        &+ A(t) \big(\frac{1-\gamma}{\gamma}(\Sigma^{S,X})^\top (\Sigma^S)^{-1} \tilde{\beta}(t) - \tilde{\Theta}(t)\big) + \big(\frac{1-\gamma}{\gamma}(\Sigma^{S,X})^\top (\Sigma^S)^{-1} \tilde{\beta}(t) - \tilde{\Theta}(t)\big)^\top A(t) = 0,\\
        A(T) &= 0,
    \end{cases}
\end{equation}
where
\begin{equation*}
    \begin{cases}
        \eta(t) &= (P e^{-\Theta (T-t)} L^X)^\top \big(P (\Sigma - e^{-\Theta(T-t)} \Sigma e^{-\Theta^\top (T-t)})P^\top + \Omega\big)^{-1},\\
        \tilde{\Theta}(t) &= \Theta + L^X \eta(t) P e^{-\Theta (T-t)},\\
        \tilde{\beta}(t) &= \beta -  L^S \eta(t) P e^{-\Theta (T-t)}    .
    \end{cases}
\end{equation*}
The coefficient vector $b(t) \in \mathbb{R}^d$ solves the ODE system
\begin{equation}
\label{eq:ODEsys_views}
    \begin{cases}
        b'(t) &+ \frac{1-\gamma}{\gamma} \big( \tilde{\beta}(t)^\top + A(t) (\Sigma^{S,X})^\top\big) (\Sigma^S)^{-1} \big( \Sigma^{S,X} b(t) + \tilde{\alpha}(t,y) - r_f \mathbf{1}_N \big) \\ &+ \big(A(t) \Sigma^X - \tilde{\Theta} (t)^\top\big) b(t) + A(t) \tilde{\Theta}(t)\tilde{\mu}(t,y) = 0, \\
        b(T) &= 0,
    \end{cases}
\end{equation}
where 
\begin{equation*}
\begin{cases}
     \tilde{\mu}(t,y) &= \mu +  \tilde{\Theta}(t)^{-1} L^X \eta(t) \left(y - P \mu \right)\\
    \tilde{\alpha}(t,y) &= \alpha + L^S \eta(t) \left(y - P (1-e^{-\Theta (T-t)})\mu \right).
    \end{cases}
\end{equation*}
The scalar  $c(t) \in \mathbb{R}$ is found by integrating the following ODE
\begin{equation}
\label{eq:Int_C_views}
    \begin{cases}
        c'(t) &+(1-\gamma)r_f + \frac{1}{2}\Tr(\Sigma^X A(t)) + \tilde{\mu}(t,y)^\top b(t) + \frac{1}{2}b(t)^\top \Sigma^X b(t)\\
        &+ \frac{1-\gamma}{2\gamma}\big(\Sigma^{S,X} b(t) + \tilde{\alpha}(t,y)   - r_f \mathbf{1}_N \big)^\top (\Sigma^S)^{-1}\big(\Sigma^{S,X} b(t) + \tilde{\alpha}(t,y) - r_f \mathbf{1}_N \big) = 0,\\
        c(T) &= 0.
    \end{cases}
\end{equation}
The unique optimal allocation strategy is
\begin{equation}
\label{eq:optimalPolicy}
    \pi^*(t) = \frac{1}{\gamma} (\Sigma^S)^{-1} \left(\tilde{\alpha}(t,y) + \tilde{\beta}(t) x - r_f \mathbf{1}_N\right) + \frac{1}{\gamma}  (\Sigma^S)^{-1}\Sigma^{S,X}\frac{\partial g}{\partial x}(t,x)
\end{equation}
where
\begin{equation*}
    \frac{\partial g}{\partial x}(t,x) = A(t) x + b(t).
\end{equation*}

A key consequence of this framework is that access to expert views is, on average, always beneficial.
\begin{proposition}
\label{prop:Views_are_good}
     Let $V_0(t,z,x)$ be the value function of an investor without access to views. Then
     \begin{equation*}
         \mathbb{E}_y [V(t,z,x) \,|\, X^y(t) = x, Z(t) = z] \geq V_0(t,z,x).
     \end{equation*}
 \end{proposition}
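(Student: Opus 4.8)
The plan is to invoke the elementary principle that extra information can only help: the view-aware investor can always discard the views and run the best no-views strategy, so her conditional optimum dominates $V_0$ for \emph{every} realization of the view, and averaging over the view via the tower property yields the claim. To set this up, let $\Pi_0$ denote the policies on $[t,T]$ that are adapted to the Brownian filtration $\{\mathcal F_s\}_{s\in[t,T]}$ and satisfy the same square-integrability requirement as $\Pi$, so that $V_0(t,z,x)=\sup_{\pi\in\Pi_0}\mathbb E_{\mathbb P}\!\left[U(Z^\pi(T))\mid X(t)=x,\,Z(t)=z\right]$, with $Z^\pi$ evolving under the unconditioned dynamics \eqref{eq:PricesVec} and \eqref{eq:dZ}. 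The no-views control problem is the special case of \eqref{eq:HJB} with $\eta\equiv 0$ (equivalently $\tilde\alpha\to\alpha$, $\tilde\beta\to\beta$, $\tilde\Theta\to\Theta$, $\tilde\mu\to\mu$), so by the same CRRA/affine HJB verification that produces \eqref{eq:optimalPolicy} there is an optimal $\pi_0^\star\in\Pi_0$, affine in the factors; since $X$ has finite second moments it satisfies the integrability constraint. (If one wishes to avoid the verification theorem, take an $\varepsilon$-optimal $\pi_0^\varepsilon\in\Pi_0$ and send $\varepsilon\downarrow 0$ at the end.)

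Next I would use $\mathcal F_s\subseteq\mathcal F_s^Y$. Every element of $\Pi_0$ is $\{\mathcal F_s^Y\}$-adapted, hence admissible in $\Pi$ for the view-conditional problem. Fix a realization $Y(0,T)=y$ and write $V(t,z,x;y)$ to make the dependence of the value function on the realized view (through $\tilde\alpha,\tilde\beta,\tilde\Theta,\tilde\mu$) explicit. Running the fixed strategy $\pi_0^\star$ under $\mathbb Q=\mathbb P(\,\cdot\mid Y(0,T)=y)$ produces precisely the conditioned wealth process, because $\mathbb E_{\mathbb Q}[\,\cdot\,]=\mathbb E_{\mathbb P}[\,\cdot\mid Y(0,T)=y]$ and the wealth SDE \eqref{eq:dZ} under $\mathbb Q$ is just the $\mathbb Q$-law of the self-financing wealth process built from \eqref{eq:PricesVec}. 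Since $\pi_0^\star$ is a suboptimal (but admissible) choice in the view-conditional problem, for every $y$
\[
V(t,z,x;y)\;\ge\;\mathbb E_{\mathbb Q}\!\big[U(Z^{\pi_0^\star}(T))\mid X^y(t)=x,\,Z(t)=z\big]\;=\;\mathbb E_{\mathbb P}\!\big[U(Z^{\pi_0^\star}(T))\mid X(t)=x,\,Z(t)=z,\,Y(0,T)=y\big].
\]

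Finally I would average over the view. The conditioning ``$X^y(t)=x$'' is read as fixing the time-$t$ factor value $X(t)=x$ and integrating $V(t,z,x;Y(0,T))$ against the conditional law of $Y(0,T)$ given $\{X(t)=x\}$ (the wealth level $Z(t)=z$ is irrelevant for the factor/view block). Applying $\mathbb E_y[\,\cdot\mid X^y(t)=x,\,Z(t)=z]$ to both sides of the displayed inequality and using the tower property on the right-hand side to collapse the conditioning on $Y(0,T)$ gives
\[
\mathbb E_y\!\big[V(t,z,x)\mid X^y(t)=x,\,Z(t)=z\big]\;\ge\;\mathbb E_{\mathbb P}\!\big[U(Z^{\pi_0^\star}(T))\mid X(t)=x,\,Z(t)=z\big]\;=\;V_0(t,z,x),
\]
the last equality by optimality of $\pi_0^\star$ for the no-views problem (or $\ge V_0(t,z,x)-\varepsilon$ in the $\varepsilon$-optimal version, then $\varepsilon\downarrow0$). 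All expectations are well defined: $\gamma>1$ gives $U\le 0$, and the affine-Gaussian structure of $(Z,X)$ keeps the relevant conditional expectations finite.

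I expect the only genuine subtlety — and the step to write out carefully — to be the measure-change bookkeeping in the middle: verifying that ``execute the fixed $\mathcal F$-adapted strategy $\pi_0^\star$, evaluate under $\mathbb Q$, then average over $y$'' reproduces ``execute $\pi_0^\star$, evaluate under $\mathbb P$.'' This is exactly the law of total expectation once one records that $\mathbb Q$ is the regular conditional distribution $\mathbb P(\,\cdot\mid Y(0,T)=y)$ and that $\pi_0^\star$ does not depend on $y$; none of the Girsanov/drift-adjustment machinery of Section~\ref{sec3} is needed here beyond this identification. Everything else is monotonicity of conditional expectation together with Bellman optimality of $\pi_0^\star$.
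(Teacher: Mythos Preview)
Your proposal is correct and follows essentially the same argument as the paper's proof: use the inclusion $\Pi_0\subset\Pi$ to feed the no-views optimizer $\pi_0^\star$ into the view-conditional problem, obtain $V(t,z,x;y)\ge \mathbb E_{\mathbb Q}[U(Z^{\pi_0^\star}(T))\mid X^y(t)=x,\,Z(t)=z]$ for every $y$, then average over $y$ and collapse the iterated expectation via the tower property to recover $V_0(t,z,x)$. Your additional remarks on $\varepsilon$-optimality and the measure-change bookkeeping are more careful than the paper's own exposition, but the core idea is identical.
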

 The proof of the results in this section are given in Appendix  \ref{App:ProofHJB} and \ref{App:Proof-prop-views-good}.


\subsection{Optimal Policy Simplification}
From \eqref{eq:optimalPolicy}, the optimal policy \(\pi^*(t)\) consists of a mean-variance component 
 \begin{eqnarray}
 \label{eq:MV_term}
     \frac{1}{\gamma} (\Sigma^S)^{-1} \left(\tilde{\alpha}(t,y) + \tilde{\beta}(t) x - r_f \mathbf{1}_N\right),
 \end{eqnarray}
 and an intertemporal hedging demand 
  \begin{eqnarray}
 \label{eq:Hedging_term}
     \frac{1}{\gamma}  (\Sigma^S)^{-1}\Sigma^{S,X}\frac{\partial g}{\partial x}(t,x) =  \frac{1}{\gamma}  (\Sigma^S)^{-1}\left(\Sigma^{S,X} A(t) x + b(t)\right).
 \end{eqnarray}
The properties  $(P, \Omega)$  of the views and the realization $y$ affect the optimal policy by modifying the dynamics of the ODEs \eqref{eq:Riccati_views} and \eqref{eq:ODEsys_views}. These modifications are complicated and it is unclear (qualitatively) how they affect the optimal policy \eqref{eq:optimalPolicy}.

The following result provides a simpler expression for the optimal policy that isolates the impact of the views. In particular, views change the terminal conditions of ODEs that characterize the optimal portfolio where there are no views. 


\begin{theorem}
\label{thm:Optimal_policy}
The optimal policy with views \eqref{eq:optimalPolicy} can be written as
\begin{equation}
\label{eq:Optimal_Policy_simple}
    \pi^*(t) = \frac{1}{\gamma} (\Sigma^S)^{-1} \left( \alpha + \beta x - r_f \mathbf{1}_N\right) + \frac{1}{\gamma} (\Sigma^S)^{-1} \Sigma^{S,X} \left(A_1(t)x + b_1(t) \right)
\end{equation}
where $A_1(t) \in \mathbb{R}^{d \times d}$ solves the constant-coefficients Riccati 
\begin{equation}
\label{eq:Riccati_no_views}
    \begin{split}
        A_1'(t) &+ \frac{1-\gamma}{\gamma} \beta^\top (\Sigma^S)^{-1} \beta + A_1(t) \big(\Sigma^X + \frac{1-\gamma}{\gamma} (\Sigma^{S,X})^\top (\Sigma^S)^{-1} \Sigma^{S,X}\big) A_1(t) \\
        &+ A_1(t) \big(\frac{1-\gamma}{\gamma}(\Sigma^{S,X})^\top (\Sigma^S)^{-1} \beta - \Theta\big) + \big(\frac{1-\gamma}{\gamma}(\Sigma^{S,X})^\top (\Sigma^S)^{-1} \beta - \Theta\big)^\top A_1(t) = 0
    \end{split}
\end{equation}
and \(b_1(t) \in \mathbb{R}^d\) solve the ODEs system
\begin{equation}
\label{eq:ODEsys_no_views}
    \begin{split}
        b_1'(t) &+ \frac{1-\gamma}{\gamma} \big( \beta^\top + A_1(t) (\Sigma^{S,X})^\top\big) (\Sigma^S)^{-1} \big( \Sigma^{S,X} b_1(t) + \alpha - r_f \mathbf{1}_N \big) \\ &+ \big(A_1(t) \Sigma^X - \Theta^\top\big) b_1(t) + A_1(t) \Theta\mu = 0.
    \end{split}
\end{equation}
The information from the expert views is incorporated exclusively through the terminal conditions
\begin{equation*}
        A_1(T) = - P^\top \Omega^{-1} P, \quad \text{and}\quad
        b_1(T) = P^\top \Omega^{-1}y.
\end{equation*}
Furthermore, \(A_1(t)\) is negative semi-definite on \([0,T]\). If \(A_1^{(1)}(t)\) and \(A_1^{(2)}(t)\) are solutions to \eqref{eq:Riccati_no_views} corresponding to view covariance matrices \(\Omega^{(1)}\) and \(\Omega^{(2)}\), respectively, where   \((\Omega^{(1)})^{-1} \succeq (\Omega^{(2)})^{-1} \), then \(A_1^{(1)}(t) \preceq A_1^{(2)}(t)\). This means \(A_1(t)\) becomes more negative semi-definite as the views precision, represented through \(\Omega^{-1}\), increases.
\end{theorem}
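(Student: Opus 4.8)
\medskip

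The plan is to show that the two representations of the value-function coefficients coincide after an affine reparametrization. Concretely, I would guess that the "views-free" objects $A_1(t), b_1(t)$ with the nonzero terminal data are related to the "views" objects $A(t), b(t)$ of \eqref{eq:Riccati_views}--\eqref{eq:ODEsys_views} by an explicit change of variables that absorbs the $\eta(t)$ terms appearing in $\tilde\Theta(t),\tilde\beta(t),\tilde\alpha(t,y),\tilde\mu(t,y)$. The natural candidate comes from Corollary \ref{corr:S^y}: since $\tilde\beta(t) = \beta - L^S\eta(t)Pe^{-\Theta(T-t)}$ and $\tilde\Theta(t) = \Theta + L^X\eta(t)Pe^{-\Theta(T-t)}$, the combination $\Sigma^{S,X}A(t) + \tilde\beta(t)^\top(\text{stuff})$ should be expressible purely through $\beta,\Theta$ once $A(t)$ is shifted by a term built from $\eta(t)Pe^{-\Theta(T-t)}$. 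So the first step is to \emph{postulate} $A_1(t) = A(t) + \Phi(t)$ and $b_1(t) = b(t) + \psi(t)$ for correction terms $\Phi,\psi$ to be determined, substitute into \eqref{eq:Riccati_no_views}--\eqref{eq:ODEsys_no_views}, and use the Riccati/ODE satisfied by $A(t),b(t)$ to reduce everything to an equation for $\Phi,\psi$ alone.

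\medskip

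The second step is to identify $\Phi(t)$ and $\psi(t)$ explicitly and verify the terminal conditions. Because $A(T)=0$ and $b(T)=0$ while we need $A_1(T) = -P^\top\Omega^{-1}P$ and $b_1(T)=P^\top\Omega^{-1}y$, the correction terms must carry all the terminal data; I expect $\Phi(t)$ to be something like $-e^{-\Theta^\top(T-t)}P^\top\big(P(\Sigma - e^{-\Theta(T-t)}\Sigma e^{-\Theta^\top(T-t)})P^\top + \Omega\big)^{-1}Pe^{-\Theta(T-t)}$ (which at $t=T$ gives $-P^\top\Omega^{-1}P$, using $\Omega(T,T)=0$ style reasoning, i.e. the bracket collapses to $\Omega$), with $\psi(t)$ an analogous expression linear in $y-P\mu$ that collapses to $P^\top\Omega^{-1}y$ at $t=T$. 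The key algebraic identity to exploit is that $\eta(t)$ in \eqref{eq:eta} is exactly $(Pe^{-\Theta(T-t)}L^X)^\top$ times the inverse of that same bracket, so $L^X\eta(t)Pe^{-\Theta(T-t)} = \Sigma^X \Phi_0(t)$-type relations hold where $\Phi_0$ is the "ungained" part of $\Phi$; this lets the $\Sigma^X A$ and $\Sigma^{S,X}$ cross terms telescope. Alternatively — and this may be cleaner — one can bypass the substitution entirely by re-deriving the HJB ansatz directly against the \emph{unconditional} price dynamics \eqref{eq:PricesVec} but over the enlarged filtration $\mathcal F_t^Y$, using \eqref{eq:Girsanov} to rewrite $dW^{\mathbb Q}$; then the "views" enter only through a terminal penalty of the form $\exp(\tfrac12 x^\top A_1(T) x + x^\top b_1(T))$ coming from the Radon--Nikodym density \eqref{eq:change_of_measure} conditioned on $X(T)$, which is Gaussian in $X(T)$ with precision $P^\top\Omega^{-1}P$ and mean-shift $P^\top\Omega^{-1}y$. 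That route makes the terminal conditions $A_1(T)=-P^\top\Omega^{-1}P$, $b_1(T)=P^\top\Omega^{-1}y$ transparent and reduces the body of the argument to matching the generator, which is constant-coefficient because we are back under $\mathbb P$-dynamics.

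\medskip

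The third step is the monotonicity/comparison claims: $A_1(t)\preceq 0$ on $[0,T]$, and $(\Omega^{(1)})^{-1}\succeq(\Omega^{(2)})^{-1}\implies A_1^{(1)}(t)\preceq A_1^{(2)}(t)$. For negative semi-definiteness I would invoke a standard comparison theorem for symmetric matrix Riccati equations: \eqref{eq:Riccati_no_views} is the Riccati equation associated with a linear-quadratic control/filtering problem, $A_1(T)=-P^\top\Omega^{-1}P\preceq0$, and the quadratic term $A_1(\Sigma^X + \tfrac{1-\gamma}{\gamma}(\Sigma^{S,X})^\top(\Sigma^S)^{-1}\Sigma^{S,X})A_1$ has a fixed sign (the bracket is positive definite for $\gamma>1$ by Assumption \ref{ass:nondeg}, using a Schur-complement argument on the joint covariance of $(S,X)$-innovations), while the inhomogeneous term $\tfrac{1-\gamma}{\gamma}\beta^\top(\Sigma^S)^{-1}\beta$ is negative semi-definite for $\gamma>1$; hence $A_1(t)$ stays $\preceq0$ for $t\le T$ by the usual invariance argument (write the solution via its linearization / the associated Hamiltonian flow, or show the set of negative semi-definite matrices is forward-invariant for the reversed-time flow). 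For the $\Omega$-comparison, subtract the two Riccati equations, set $\Delta(t)=A_1^{(1)}(t)-A_1^{(2)}(t)$, show $\Delta$ solves a \emph{linear} matrix ODE of the form $\Delta' + M(t)^\top\Delta + \Delta M(t) + (\text{quadratic in }\Delta, \text{sign-definite}) = 0$ with terminal value $\Delta(T) = -(\Omega^{(1)})^{-1}+(\Omega^{(2)})^{-1}\preceq0$ in the $P^\top(\cdot)P$ sense, and conclude $\Delta(t)\preceq0$ by the same invariance/comparison principle. The main obstacle I anticipate is \textbf{Step 2's algebra} — getting the correction terms $\Phi(t),\psi(t)$ exactly right and checking they kill every $\eta(t)$-dependent term in the transformed Riccati and ODE systems is bookkeeping-heavy; I would mitigate this by using the alternative "enlarged-filtration + Radon--Nikodym terminal penalty" derivation, which sidesteps the substitution and makes the constant-coefficient structure and the terminal data fall out directly from the Gaussian conditioning in \eqref{eq:change_of_measure}--\eqref{eq:Radon-Nykodyn}.
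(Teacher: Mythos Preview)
Your proposal is correct and follows essentially the same route as the paper: the paper's Lemma in the appendix introduces exactly the decomposition $A(t)=A_1(t)+\hat A(t)$, $b(t)=b_1(t)+\hat b(t)$ with $\hat A(t)=(Pe^{-\Theta(T-t)})^\top\big(P(\Sigma-e^{-\Theta(T-t)}\Sigma e^{-\Theta^\top(T-t)})P^\top+\Omega\big)^{-1}Pe^{-\Theta(T-t)}$ (your $-\Phi(t)$), verifies that $\hat A$ solves the auxiliary Riccati $\hat A'=\Theta^\top\hat A+\hat A\Theta+\hat A\Sigma^X\hat A$ with $\hat A(T)=P^\top\Omega^{-1}P$, and then checks the algebraic identity $\Sigma^{S,X}(\hat A(t)x+\hat b(t))=(\alpha+\beta x)-(\tilde\alpha(t,y)+\tilde\beta(t)x)$ so that the mean-variance and hedging terms recombine into \eqref{eq:Optimal_Policy_simple}; the monotonicity claims are handled, as you suggest, by writing the difference of two Riccati solutions as a homogeneous Riccati with sign-definite data and invoking a standard comparison result. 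Your alternative ``Radon--Nikodym terminal penalty'' derivation is not what the paper does, but it is a legitimate and arguably more conceptual shortcut that would indeed make the terminal conditions $A_1(T)=-P^\top\Omega^{-1}P$, $b_1(T)=P^\top\Omega^{-1}y$ emerge directly from the Gaussian density of $Y(0,T)$ given $X(T)$.
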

The proof of this theorem is given in Appendix \ref{App:Proof-thm-sec5}.

The result enables us to compare the optimal policy with views to the one without. From Section \ref{sec51}, the optimal policy without views is
 \begin{equation}
 \label{eq:pi_0}
     \pi_0^*(t) = \frac{1}{\gamma} (\Sigma^S)^{-1} \big(\alpha + \beta x - r_f \mathbf{1}_N\big) + \frac{1}{\gamma}  (\Sigma^S)^{-1}\Sigma^{S,X}\left(A_0(t)x + b_0(t)\right)
 \end{equation}
where \((A_0(t), b_0(t))\) solve the same ODEs as \((A_1(t), b_1(t))\) but with zero terminal conditions, \(A_0(T) = 0\) and \(b_0(T) = 0\).\footnote{This can be directly derived from the optimal policy characterization in Section \ref{sec51} by letting the view uncertainty \(\Omega \to \infty\). In this case, \(\tilde{\alpha}(t,y) \to \alpha\), \(\tilde{\beta}(t) \to \beta\), \(\tilde{\Theta}(t) \to \Theta\), and \(\tilde{\mu}(t,y) \to \mu\).} The aggregate impact of views is to change the terminal conditions of the ODEs \eqref{eq:Riccati_views} and \eqref{eq:ODEsys_views} which affects the second term in \eqref{eq:pi_0}. 
The optimal policy when there are views can now be written in terms of the no-views policy:
  \begin{equation}
    \label{eq:OptimalPolicy=pi_0+hedge}
        \pi^*(t) = \pi_0^*(t) + H(t,x)
    \end{equation}
    where \[H(t,x) = \frac{1}{\gamma} (\Sigma^S)^{-1}\Sigma^{S,X} \big( Q(t) x + q(t) \big)\] is the adjustment to the policy induced by expert views with  \(Q(t) := A_1(t) - A_0(t)\) and \(q(t):= b_1(t) - b_0(t)\).  The sensitivity of this adjustment to factor movements is
    \begin{eqnarray*}
    \frac{\partial H}{\partial x}(t,x) = \frac{1}{\gamma} (\Sigma^S)^{-1}\Sigma^{S,X} Q(t).
\end{eqnarray*}
From Theorem \ref{thm:Optimal_policy}, the matrix \(Q(t)\) is negative semi-definite and becomes more negative as the view precision \(\Omega^{-1}\) increases. Consequently, the magnitude of the views-induced adjustment  \(||H(t,x)||_2\) increases with precision.
When shocks to factors and assets are independent (i.e., \(\Sigma^{S,X} = 0\)), the adjustment term vanishes and views have no effect on the optimal policy.

\section{Unknown Drift and Dynamic Learning}
\label{sec6}
The analysis so far assumes that the model parameters are fully known at the start of the investment horizon. We now address the case where the long-run mean \(\alpha \in \mathbb{R}^N\) is not known but can be learned online using market observations. The investor now continuously updates her beliefs about the value of~\(\alpha\) while optimizing her portfolio.


We assume price dynamics of the form
\[dS(t) = D(S(t)) \left( \left(\alpha + \beta X(t) \right) dt + L^S dW(t) \right).\]
We adopt a normal prior \[\alpha \sim \mathcal{N}\left(\alpha_0, \Gamma_0\right)\]
and assume that it is independent of the risk factors \(X\). It follows that it is also independent of the expert views \(Y(0,T)\) and the conditional asset price dynamics from Corollary \ref{corr:S^y} still hold, with \(\alpha\) now treated as an unobserved random variable
\begin{equation*}
    dS^y(t) = D(S^y(t))\big((\alpha + \tilde{\lambda}(t,y) + \tilde{\beta}(t) X^y(t)) dt + L^S dW^\mathbb{Q}(t)\big)
\end{equation*}
where \begin{equation}
\begin{split}
\label{eq:tilde_lambda_beta}
    \tilde{\lambda}(t,y) &:= \tilde{\alpha}(t,y) - \alpha = L^S \eta(t) \left(y - ( 1-e^{-\Theta (T-t)})\mu\right),\\
    \tilde{\beta}(t) &= \beta -  L^S \eta(t) P e^{-\Theta (T-t)}
    \end{split}
\end{equation} 
(see Equation \eqref{eq:coeff_mu^S}). The conditional factor dynamics for \(X^y(t)\) are given by Proposition \ref{prop:X^y}.
\subsection{Learning the Drift from Market Data}
\label{sec61}
To learn the value of the long-run mean of the drift \(\alpha\), the investor must filter it from incoming market data. The most direct source of information is the asset prices themselves. We can construct an observation process \(dN_1^y(t)\) from the vector of asset log-returns by stripping away the known component of the drift
\begin{equation}
\label{eq:Observations_dN}
    dN_1^y(t) := dR^y(t) -\left( \tilde{\lambda}(t,y) + \tilde{\beta}(t) X^y(t) - \frac{1}{2}\diag\left(L^S (L^S)^\top\right)\right) dt = \alpha dt + L^S dW^\mathbb{Q}(t),
\end{equation}
where 
\[
    R^y(t) :=  \left[\ln\left(\frac{S_1^y(t)}{S_1^y(0)}\right), \cdots,\ln\left(\frac{S_N^y(t)}{S_N^y(0)}\right)\right]^\top \in \mathbb{R}^N
\]
is the conditional log-returns vector. Intuitively, observing \(dN_1^y(t)\) allows the investor to estimate \(\alpha\), though the signal contains market noise \(L^S dW^\mathbb{Q}(t)\).

However, this is not the only source of information. When asset and factor innovations are correlated (\(\Sigma^{S,X} \neq 0\)), the factor dynamics contain information about the shared noise source \(dW^\mathbb{Q}(t)\). To exploit this, we define a second observation process from the factor innovations 
\begin{equation}
    \label{eq:Obs2}
    dN_2^y(t) := dX^y(t) - \tilde{\Theta}(t)(\tilde{\mu}(t,y) - X^y(t)) dt = L^X dW^\mathbb{Q}(t).
\end{equation}
By observing \(dN_2^y(t)\), the investor gains a partial view of the noise term \(dW^\mathbb{Q}(t)\), which reduces the noise in the returns signal \(dN_1^y(t)\). The amount of information gained depends on the correlation structure \(\Sigma^{S,X} = L^S (L^X)^\top\). 

Combining these into a single observation vector \(dN^y(t) = [dN_1^y(t)^\top, dN_2^y(t)^\top]^\top \in \mathbb{R}^{N + d}\) yields the joint observation model
\begin{equation}
\label{eq:Joint-Observation}
    dN^y(t) = H \alpha dt + G dW^\mathbb{Q}(t)
\end{equation}
where the observation matrix \(H\) and the joint diffusion matrix \(G\) are
\begin{equation*}
    H = \begin{pmatrix}
        I_N \\ \mathbf{0}_{d \times N}
    \end{pmatrix}, \quad G = \begin{pmatrix}
        L^S \\ L^X
    \end{pmatrix}.
\end{equation*}
Given the linear structure of the observations \eqref{eq:Joint-Observation}, the posterior distribution of \(\alpha\) is normal with mean and variance given by the Kalman filter (see, for example Lemma 6.2.2 of \cite{oksendal2003}).

\begin{proposition}\label{prop:KF}The estimated drift \[\hat{\alpha}(t) := \mathbb{E}\bigr[\alpha \, | \, \mathcal{F}_t^Y\bigl]\] evolves according to the Kalman filter equation
    \begin{equation}
    \label{eq:hat_alpha}
            d\hat{\alpha}(t) = K(t)(dN^y(t) - H \hat{\alpha}(t) dt),
    \end{equation}
    with initial condition \(\hat{\alpha}(0) = \alpha_0\).
    The Kalman Gain matrix \(K(t) \in \mathbb{R}^{N \times (N+d)}\) is given by
    \begin{equation} \label{eq:KF_Gain} K(t) = \Gamma(t) H^\top (GG^\top)^{-1}.\end{equation}
    The estimation error covariance \(\Gamma(t) = \mathbb E\big[(\alpha - \hat{\alpha}(t))(\alpha - \hat{\alpha}(t))^\top \, | \, \mathcal F_t^Y \big] \in \mathbb{R}^{N \times N}\) is deterministic and given explicitly by 
        \begin{equation}
    \label{eq:Gamma_t}
        \Gamma(t) =  \left(\Gamma_0^{-1} + t \, (\Sigma^S - \Sigma^{S,X} (\Sigma^{X})^{-1} (\Sigma^{S,X})^\top)^{-1}\right)^{-1}.
    \end{equation} 
    The innovation process \begin{equation}
        \label{eq:Innovations_KF}
        V^\mathbb{Q}(t) := N^y(t) - \int_0^t H \hat{\alpha}(s) ds
    \end{equation}
    is a Brownian motion with quadratic variation \(dV^\mathbb{Q}(t) (dV^\mathbb{Q}(t))^\top = GG^\top dt\).
\end{proposition}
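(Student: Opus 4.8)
The statement is a linear--Gaussian filtering result, so the plan is to cast it as a Kalman--Bucy problem with a \emph{constant} (zero-dynamics) signal $\alpha$ and then specialise the generic filter to the block structure of \eqref{eq:Joint-Observation}. First I would pin down the information set: because the coefficient functions $\tilde\lambda(\cdot,y)$, $\tilde\beta(\cdot)$, $\tilde\Theta(\cdot)$, $\tilde\mu(\cdot,y)$ appearing in \eqref{eq:Observations_dN}--\eqref{eq:Obs2} are deterministic and known to the investor, observing the paths $(S^y(s),X^y(s))_{s\le t}$ together with the view $y$ is equivalent to observing $(N^y(s))_{s\le t}$ and $y$; hence $\hat\alpha(t)=\mathbb E_{\mathbb Q}[\alpha\mid\mathcal F^Y_t]=\mathbb E_{\mathbb Q}[\alpha\mid \sigma(N^y(s):s\le t)]$. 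I would also record that, since the Radon--Nikodym density \eqref{eq:change_of_measure} and the conditioning event $\{Y(0,T)=y\}$ do not involve $\alpha$ and $\alpha\perp(W,Y)$ under $\mathbb P$, the prior $\alpha\sim\mathcal N(\alpha_0,\Gamma_0)$ is preserved under $\mathbb Q$ and stays independent of $W^{\mathbb Q}$; together with $N^y(0)=0$ this gives $\hat\alpha(0)=\alpha_0$.

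Next I would reduce the (possibly rank-deficient) joint observation \eqref{eq:Joint-Observation} to a non-degenerate one by projecting out the ``noise-only'' channel $dN_2^y=L^X\,dW^{\mathbb Q}$, which is observed exactly. Since the best linear predictor of $L^S\,dW^{\mathbb Q}$ given $L^X\,dW^{\mathbb Q}$ is $\Sigma^{S,X}(\Sigma^X)^{-1}L^X\,dW^{\mathbb Q}$, the cleaned observation
\[
d\tilde N(t):=dN_1^y(t)-\Sigma^{S,X}(\Sigma^X)^{-1}\,dN_2^y(t)=\alpha\,dt+\bigl(L^S-\Sigma^{S,X}(\Sigma^X)^{-1}L^X\bigr)\,dW^{\mathbb Q}(t)
\]
has innovation covariance equal to the Schur complement $\bar\Sigma:=\Sigma^S-\Sigma^{S,X}(\Sigma^X)^{-1}(\Sigma^{S,X})^\top$, which I take to be positive definite (equivalently $GG^\top\succ0$, a mild strengthening of Assumption~\ref{ass:nondeg}), and $(N^y,y)$ and $(\tilde N,N_2^y,y)$ generate the same filtration. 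The problem is then the textbook one of estimating an unknown constant from $d\tilde N(t)=\alpha\,dt+\bar L\,dW^{\mathbb Q}(t)$ with a Gaussian prior: by the Kalman--Bucy theorem (e.g.\ Lemma~6.2.2 of \cite{oksendal2003}) the posterior is Gaussian with a deterministic covariance solving $\dot\Gamma=-\Gamma\bar\Sigma^{-1}\Gamma$, $\Gamma(0)=\Gamma_0$ --- equivalently $\Gamma(t)^{-1}=\Gamma_0^{-1}+t\bar\Sigma^{-1}$, which is \eqref{eq:Gamma_t} --- and mean satisfying $d\hat\alpha(t)=\Gamma(t)\bar\Sigma^{-1}\bigl(d\tilde N(t)-\hat\alpha(t)\,dt\bigr)$.

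It then remains to re-express this filter through the original observation $N^y$ and to verify it coincides with \eqref{eq:hat_alpha}--\eqref{eq:KF_Gain}. Applying the block-inverse formula to $GG^\top=\begin{pmatrix}\Sigma^S&\Sigma^{S,X}\\(\Sigma^{S,X})^\top&\Sigma^X\end{pmatrix}$ with $H=\begin{pmatrix}I_N\\\mathbf 0\end{pmatrix}$ yields $H^\top(GG^\top)^{-1}=\bigl(\bar\Sigma^{-1},\,-\bar\Sigma^{-1}\Sigma^{S,X}(\Sigma^X)^{-1}\bigr)$ and $H^\top(GG^\top)^{-1}H=\bar\Sigma^{-1}$, so that $\Gamma(t)H^\top(GG^\top)^{-1}\bigl(dN^y-H\hat\alpha\,dt\bigr)=\Gamma(t)\bar\Sigma^{-1}\bigl(d\tilde N-\hat\alpha\,dt\bigr)$; this identifies the gain as $K(t)=\Gamma(t)H^\top(GG^\top)^{-1}$ and shows the reduced Riccati is consistent with $\dot\Gamma=-\Gamma H^\top(GG^\top)^{-1}H\Gamma$. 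For the innovation process I would write $dV^{\mathbb Q}(t)=dN^y(t)-H\hat\alpha(t)\,dt=H(\alpha-\hat\alpha(t))\,dt+G\,dW^{\mathbb Q}(t)$: its quadratic variation is $GG^\top\,dt$ (the absolutely continuous drift does not contribute), and it is an $\{\mathcal F^Y_t\}$-martingale since for $u\ge s$ the tower property gives $\mathbb E_{\mathbb Q}[\alpha-\hat\alpha(u)\mid\mathcal F^Y_s]=\hat\alpha(s)-\hat\alpha(s)=0$; a continuous martingale with deterministic quadratic variation $GG^\top t$ is a Brownian motion with that covariance (L\'evy's characterisation applied to $(GG^\top)^{-1/2}V^{\mathbb Q}$). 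The main obstacle is bookkeeping rather than analytical depth: one must carefully argue that $\mathcal F^Y_t$ is generated by the observable processes so that Kalman--Bucy applies with the stated conditioning, and one must flag that $GG^\top\succ0$ --- i.e.\ the absence of a noiseless linear functional of $\alpha$ --- is needed for \eqref{eq:KF_Gain}--\eqref{eq:Gamma_t} to be well posed, which is slightly stronger than what Assumption~\ref{ass:nondeg} literally asserts.
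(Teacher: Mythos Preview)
Your proposal is correct and follows the same Kalman--Bucy route as the paper, but you carry out more of the work explicitly. The paper's proof simply invokes Theorem~6.3.1 of \cite{oksendal2003} on the joint observation \eqref{eq:Joint-Observation}, states the gain $K(t)=\Gamma(t)H^\top(GG^\top)^{-1}$ and the Riccati $\dot\Gamma=-\Gamma\,\bar\Sigma^{-1}\Gamma$ with $\bar\Sigma=\Sigma^S-\Sigma^{S,X}(\Sigma^X)^{-1}(\Sigma^{S,X})^\top$, and then verifies \eqref{eq:Gamma_t} solves it; the identity $H^\top(GG^\top)^{-1}H=\bar\Sigma^{-1}$ is left implicit. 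Your detour through the cleaned observation $d\tilde N=dN_1^y-\Sigma^{S,X}(\Sigma^X)^{-1}dN_2^y$ makes that Schur-complement structure transparent and explains \emph{why} the factor channel accelerates learning, which the paper only remarks on afterwards in \eqref{eq:Precision}. You also address points the paper omits entirely: that the $\mathcal N(\alpha_0,\Gamma_0)$ prior survives the change of measure to $\mathbb Q$, that $\mathcal F_t^Y$ is generated by the observables, and the L\'evy-characterisation argument for $V^{\mathbb Q}$. Your flag that $GG^\top\succ0$ is needed (and not literally guaranteed by Assumption~\ref{ass:nondeg}) is a fair observation that the paper does not make.
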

The precision of the drift estimate \(\Gamma(t)^{-1}\) highlights the values of factors in refining the estimate \(\hat{\alpha}(t)\). From \eqref{eq:Gamma_t}, we can express the rate of change in the precision as
\begin{equation}
\label{eq:Precision}
\begin{split}
        \frac{d}{dt}\Gamma(t)^{-1} &= (\Sigma^S - \Sigma^{S,X} (\Sigma^{X})^{-1} (\Sigma^{S,X})^\top)^{-1}\\
        &= \underbrace{(\Sigma^S)^{-1}}_{\text{Learning from asset data}} + \underbrace{(\Sigma^S)^{-1}\Sigma^{S,X}\left(\Sigma^X - (\Sigma^{S,X})^{\top}(\Sigma^S)^{-1}\Sigma^{S,X}\right)^{-1} (\Sigma^{S,X})^\top (\Sigma^S)^{-1}}_{\text{Learning from factor movement}}
\end{split}
\end{equation}
The first term \((\Sigma^S)^{-1}\) represents the precision gain from observing asset returns. The second term, which is positive semi-definite, represents the \emph{additional} precision gained by observing the correlated factors. Notably, when shocks to assets and factors are independent (i.e., \(\Sigma^{S,X} = 0\)), the second term becomes zero.
Note too that the \emph{precision} of the drift estimation is independent of the views \(Y(0,T)\), meaning the  uncertainty in the investor's estimate of $\alpha$ declines at the same rate with and without views.

\subsection{Optimal Policy under Drift Uncertainty}
\label{subsec:HJB_KF}

The investor's belief about the unknown drift \(\hat{\alpha}(t)\) evolves according to the Kalman filter equations derived in Proposition \ref{prop:KF}. Because this estimate changes stochastically as new market data arrives, \(\hat{\alpha}(t)\) becomes an additional \emph{state variable} in the investor's control problem. 

To formalize this, observe from \eqref{eq:Innovations_KF} that the dynamics of the drift estimate \(\hat{\alpha}(t)\) can be expressed as
\begin{equation}
    d\hat{\alpha}(t) = K(t) dV^\mathbb{Q}(t)
\end{equation}
where \(V^\mathbb{Q}(t)\) is a Brownian motion with volatility \(G\), 
while the dynamics of the risk factors from \eqref{eq:dX^y} remain
\begin{equation*}
\begin{split}
        dX^y(t) &= \tilde{\Theta}(t) (\tilde{\mu}(t,y) - X^y(t)) \, dt + L^X dW^\mathbb{Q}(t).
\end{split}
\end{equation*}
The conditional dynamics of the asset prices is
\begin{equation}
\label{eq:Price-process-filtering}
    dS^y(t) = D(S^y(t)) \bigg( \left(\hat{\alpha}(t) + \tilde{\lambda}(t,y) + \tilde{\beta}(t)X^y(t) \right)dt + L^S dW^\mathbb{Q}(t) \bigg)
\end{equation}

To solve the control problem, we define an augmented state vector \(M(t) = [X^y(t)^\top, \hat{\alpha}(t)^\top]^\top \in \mathbb{R}^{d+N}\) that combines both the observable risk factors and the learned drift estimate. The joint dynamics of this augmented state can be written in a mean-reverting form
\begin{equation}
\label{eq:M(t)}
    dM(t) = \Theta^M(t) (\mu^M(t) - M(t)) dt + \begin{pmatrix}
        L^X & K(t)
    \end{pmatrix}^\top dW^{M,\mathbb{Q}}(t)
\end{equation}
where 
\begin{equation*}
    W^{M,\mathbb{Q}}(t) := [W^\mathbb{Q}(t)^\top, V^\mathbb{Q}(t)^\top]
\end{equation*}
is a Brownian motion and the mean-reverting and long-run mean coefficients are
\[\tilde{\Theta}^M(t) = \begin{pmatrix} \tilde{\Theta}(t) & \mathbf{0} \\ \mathbf{0} & \mathbf{0} \end{pmatrix}, \quad \tilde{\mu}^M(t, y) = \begin{pmatrix} \tilde{\mu}(t,y) \\ \mathbf{0} \end{pmatrix}.\]
Observe that 
\[dM(t) (dM(t))^\top = L_t^M (L_t^M)^\top\] with 
\begin{equation*}
    L^M_t := \begin{pmatrix} L^X \\ K(t) G \end{pmatrix} \in \mathbb{R}^{(d + N ) \times N'}.
\end{equation*}
The conditional asset prices \eqref{eq:Price-process-filtering}  can now be written as
\begin{equation*}
    dS^y(t) = D(S^y(t)) \left(\left(\tilde{\lambda}(t,y) + \tilde{\beta}^M(t) M(t) \right)dt + L^S dW^\mathbb{Q}(t) \right)
\end{equation*}
where
\(\tilde{\beta}^M(t) = \begin{pmatrix}
    \tilde{\beta}(t) & I_N
\end{pmatrix} \in \mathbb{R}^{N \times (d+N)}\) and the investor's wealth satisfies 
\begin{equation}
\label{eq:weath_filtering}
    dZ(t) = Z(t) \bigg( r_f dt + \pi(t)^\top \big(\tilde{\lambda}(t,y) + \tilde{\beta}^M(t) M(t) - r_f \mathbf{1}_N\big) dt + \pi(t)^\top L^S dW^\mathbb{Q}(t) \bigg).
\end{equation}

With this augmented formulation, the control problem now mirrors the structure  of the problem solved in Section \ref{sec5} but with \(M(t)\) now playing the role as the factor process. The investor's value function 
\begin{equation*}
    V(t,z,m) = \max_{\pi \in \Pi} \mathbb{E}_{\mathbb{Q}}\bigl[U(Z(T)) \,|\, Z(t) = z, M(t) = m \bigr]
\end{equation*}
with $M(t)$ and $Z(t)$  given by \eqref{eq:M(t)}--\eqref{eq:weath_filtering} satisfies the HJB equation
\begin{align}
\max_{\pi}\Biggl\{\, & \frac{\partial V}{\partial t} + z\Bigl[r_f + \pi^\top\Bigl\{\tilde{\lambda}(t,y) + \tilde{\beta}^M(t) m - r_f\,\mathbf{1}_N\Bigr\}\Bigr]\nabla_z V
+ \Bigl(\tilde{\Theta}^M(t) (\tilde{\mu}^M(t,y) - m)\Bigr)^\top \nabla_m V \notag\\[1mm]
&  + \frac{1}{2} z^2 \pi(t)^\top \Sigma^S \pi(t)  \nabla^2_z V + \frac{1}{2} \Tr(\Sigma^M_t \nabla_m^2 V)  + z \pi(t)^\top \Sigma^{S,M}_t \nabla^2_{m,z} V\Big\} = 0,
\label{eq:HJB_Filtering}
\end{align}
where \(\Sigma^M_t = L^M_t (L^M_t)^\top\) and \(\Sigma^{S,M}_t = L^S (L^M_t)^\top\), with terminal conditional \(
V(T,z,m) = U(z) = \frac{z^{1-\gamma}}{1-\gamma}.
\) 
The value function and optimal policy can now be characterized using the results in Section \ref{sec51}. 

\begin{corollary}
    \label{prop:Policy_KF}
Let $\gamma > 1$. The solution to the HJB equation \eqref{eq:HJB_Filtering} is 
\[V(t, z, m) = \frac{z^{1-\gamma}}{1-\gamma}\exp\left(g(t, m)\right),\]
where the function $g$ is quadratic in the augmented state vector $m = [x^\top, \hat{\alpha}^\top]^\top$
\begin{equation}
\label{eq:g_filtering}
\begin{split}
    g(t,m) &= \frac{1}{2}m^\top A(t) m + m^\top b(t) + c(t)\\
   &=  \frac{1}{2}x^\top A^x(t)x  + \frac{1}{2}\hat{\alpha}^\top A^{\alpha}(t)\hat{\alpha} + x^\top A^{x,\alpha}(t)\hat{\alpha} + x^\top b^x(t) + \hat{\alpha}^\top b^\alpha(t)  + c(t)
    \end{split}
\end{equation}
The symmetric matrix \[A(t) = \begin{pmatrix}
    A^x(t) & A^{x,\alpha}(t)\\ A^{x,\alpha}(t)^\top & A^\alpha(t) 
\end{pmatrix} \in \mathbb{R}^{(d+N)\times(d+N)}\] is negative semi-definite and solves the matrix Riccati equation
\begin{equation*}
\begin{cases}
    A'(t) + \frac{1-\gamma}{\gamma} \tilde{\beta}^M(t)^\top (\Sigma^S)^{-1} \tilde{\beta}^M(t) + A(t) \left(\Sigma_t^M + \frac{1-\gamma}{\gamma} (\Sigma_t^{S,M})^\top (\Sigma^S)^{-1} \Sigma_t^{S,M}\right) A(t) \\
    \quad + A(t) \left(\frac{1-\gamma}{\gamma}(\Sigma_t^{S,M})^\top (\Sigma^S)^{-1} \tilde{\beta}^M(t) - \tilde{\Theta}^M(t)\right) \\
    \quad + \left(\frac{1-\gamma}{\gamma}(\Sigma_t^{S,M})^\top (\Sigma^S)^{-1} \tilde{\beta}^M(t) - \tilde{\Theta}^M(t)\right)^\top A(t) = 0,\\
    A(T) = 0.
\end{cases}
\end{equation*}
 The vector $b(t)  = [b^x(t)^\top, b^\alpha(t)^\top]^\top \in \mathbb{R}^{d+N}$ solves the ODE system
 \begin{equation*}
\begin{cases}
    b'(t) + \frac{1-\gamma}{\gamma} \left( \tilde{\beta}^M(t)^\top + A(t) (\Sigma_t^{S,M})^\top\right) (\Sigma^S)^{-1} \left( \Sigma_t^{S,M} b(t) + \tilde{\lambda}(t,y) - r_f \mathbf{1}_N \right) \\
    \quad + \left(A(t) \Sigma_t^M - \tilde{\Theta}^M(t)^\top\right) b(t) + A(t) \tilde{\mu}^M(t,y) = 0, \\
    b(T) = 0.
\end{cases}
\end{equation*}
 The scalar-valued function $c(t)$ is given in \eqref{eq:EC_c_filtering} in Online Appendix \ref{Appsec:KF}.
The optimal investment policy 
\begin{equation}
\pi^*(t,x,\hat{\alpha}) = \underbrace{\frac{1}{\gamma}(\Sigma^S)^{-1}\left(\hat{\alpha}(t) + \tilde{\lambda}(t,y) + \tilde{\beta}(t)x - r_f\mathbf{1}_N\right)}_{\text{Mean-Variance Holding}} + \underbrace{H
^x(t, x, \hat{\alpha})}_{\text{Factor Hedging}} + \underbrace{H^\alpha(t, x, \hat{\alpha}).}_{\text{Estimation Risk Hedging}}
\label{eq:opt_policy_KF}
\end{equation}
consists of a mean-variance holding and intertemporal hedging demands for shifts in the risk factors $X^y(t)$
$$H^x(t, x, \hat{\alpha}) = \frac{1}{\gamma}(\Sigma^S)^{-1}\Sigma^{S,X}\left(A^x(t)x + b^x(t) + A^{x,\alpha}\hat{\alpha}\right)$$
and changes in the estimate of the drift $\hat{\alpha}(t)$
$$H^\alpha(t, x, \hat{\alpha}) = \frac{1}{\gamma}(\Sigma^S)^{-1}\Gamma(t)\left(A^\alpha(t)\hat{\alpha} + b^\alpha(t) + (A^{x,\alpha})^\top x\right).$$
\end{corollary}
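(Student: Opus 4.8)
The plan is to reduce the learning problem to the one already solved in Section~\ref{sec51}. Proposition~\ref{prop:KF} shows that, on the investor's filtration $\mathcal F_t^Y$, the augmented state $M(t)=[X^y(t)^\top,\hat\alpha(t)^\top]^\top$ together with the innovations process $V^{\mathbb Q}$ describes the observed market: $M$ solves the affine, mean-reverting SDE~\eqref{eq:M(t)} with state-independent diffusion $L^M_t$, the conditional asset prices are affine in $M$ with time-dependent intercept $\tilde\lambda(t,y)$ and loading $\tilde\beta^M(t)=[\tilde\beta(t)\ \ I_N]$, and the wealth process~\eqref{eq:weath_filtering} has exactly the form~\eqref{eq:dZ} with $X^y$ replaced by $M$, $\Sigma^X$ by $\Sigma^M_t$, $\Sigma^{S,X}$ by $\Sigma^{S,M}_t$, and $\tilde\alpha(t,y)$ by $\tilde\lambda(t,y)$. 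So I would first record this translation carefully, in particular checking that $V^{\mathbb Q}$ is an $\mathcal F_t^Y$-Brownian motion (so the control problem is genuinely well posed in the observed filtration), that $d\hat\alpha(t)=K(t)\,dV^{\mathbb Q}(t)$, and that the asset-return innovation seen by the investor has covariation $\Sigma^S\,dt$ with itself and $\Gamma(t)\,dt$ with $d\hat\alpha$. This is the separation-principle step: once it is in place, $M(t)$ is a bona fide Markovian state driven by a Brownian motion with the claimed covariance structure, and $\hat\alpha$ simply becomes an extra state variable.

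Given this, the HJB equation~\eqref{eq:HJB_Filtering} is structurally identical to~\eqref{eq:HJB}, so I would replay the argument of Section~\ref{sec51}: posit $V(t,z,m)=\frac{z^{1-\gamma}}{1-\gamma}e^{g(t,m)}$ with $g(t,m)=\tfrac12 m^\top A(t)m+m^\top b(t)+c(t)$, compute the first-order condition in $\pi$, and use $-\nabla_z V/(z\nabla^2_z V)=1/\gamma$ and $-\nabla^2_{m,z}V/(z\nabla^2_z V)=\nabla_m g/\gamma$ to get $\pi^*(t)=\tfrac1\gamma(\Sigma^S)^{-1}\big(\tilde\lambda(t,y)+\tilde\beta^M(t)m-r_f\mathbf 1_N\big)+\tfrac1\gamma(\Sigma^S)^{-1}\Sigma^{S,M}_t(A(t)m+b(t))$. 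Substituting $\pi^*$ back into~\eqref{eq:HJB_Filtering} and collecting the terms that are quadratic, linear, and constant in $m$ yields, respectively, the matrix Riccati equation for $A(t)$, the linear ODE system for $b(t)$, and the scalar ODE for $c(t)$, each with terminal value $0$ coming from $V(T,z,m)=U(z)$. The only feature not already present in Section~\ref{sec51} is that $\Sigma^M_t$ and $\Sigma^{S,M}_t$ are now time-dependent, which changes nothing in the derivation. Negative semidefiniteness of $A(t)$ on $[0,T]$ and global existence of the Riccati solution follow from the same comparison and sign arguments as in Section~\ref{sec51} and Theorem~\ref{thm:Optimal_policy} (the flow from $A(T)=0$ stays in the negative-semidefinite cone because $\tfrac{1-\gamma}{\gamma}<0$, and the same sign precludes finite-time blow-up). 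A standard verification argument (admissibility of $\pi^*\in\Pi$ plus a uniform-integrability/transversality estimate) confirms that the candidate is the value function and that $\pi^*$ is optimal.

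It remains to obtain the decomposition~\eqref{eq:opt_policy_KF}. The key identity is $\Sigma^{S,M}_t=L^S(L^M_t)^\top=[\,L^S(L^X)^\top\ \ L^S G^\top K(t)^\top\,]=[\,\Sigma^{S,X}\ \ \Gamma(t)\,]$, which holds because $L^S G^\top=[\Sigma^S\ \Sigma^{S,X}]=H^\top GG^\top$ and $K(t)^\top=(GG^\top)^{-1}H\Gamma(t)$, so $L^S G^\top K(t)^\top=H^\top GG^\top(GG^\top)^{-1}H\Gamma(t)=H^\top H\,\Gamma(t)=\Gamma(t)$. Partitioning $A(t)$ and $b(t)$ into $X^y$- and $\hat\alpha$-blocks as in~\eqref{eq:g_filtering} and using $\tilde\beta^M(t)m=\tilde\beta(t)x+\hat\alpha$, the mean-variance term of $\pi^*$ becomes $\tfrac1\gamma(\Sigma^S)^{-1}\big(\hat\alpha+\tilde\lambda(t,y)+\tilde\beta(t)x-r_f\mathbf 1_N\big)$, while the hedging term $\tfrac1\gamma(\Sigma^S)^{-1}[\,\Sigma^{S,X}\ \Gamma(t)\,](A(t)m+b(t))$ splits exactly into the $\Sigma^{S,X}$-weighted block $H^x(t,x,\hat\alpha)$ and the $\Gamma(t)$-weighted block $H^\alpha(t,x,\hat\alpha)$, which gives~\eqref{eq:opt_policy_KF}.

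The main obstacle is the reduction step itself rather than any single calculation: one must argue rigorously that passing to the innovations filtration makes $M(t)$ Markovian and driven by an $\mathcal F_t^Y$-Brownian motion with the stated covariances, and that admissibility and the verification hypotheses are unaffected by replacing $\alpha$ with its filter $\hat\alpha(t)$. Once the model is put in the form~\eqref{eq:M(t)}--\eqref{eq:weath_filtering}, the rest is a mechanical replay of Section~\ref{sec51} together with the short block identity $\Sigma^{S,M}_t=[\Sigma^{S,X}\ \Gamma(t)]$.
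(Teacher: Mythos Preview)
Your proposal is correct and follows essentially the same route as the paper: both recognize that once the filtering step of Proposition~\ref{prop:KF} delivers the augmented Markov state $M(t)$ with affine drift and state-independent diffusion, the control problem is structurally identical to the one in Section~\ref{sec51}, so the value-function ansatz, the Riccati/ODE system, and the negative-semidefiniteness argument carry over verbatim with the augmented coefficients $\tilde\beta^M,\tilde\Theta^M,\Sigma^M_t,\Sigma^{S,M}_t$. Your write-up is in fact more complete than the paper's on one point: you explicitly derive the block identity $\Sigma^{S,M}_t=[\,\Sigma^{S,X}\ \ \Gamma(t)\,]$ via $L^SG^\top=H^\top GG^\top$ and $K(t)^\top=(GG^\top)^{-1}H\Gamma(t)$, which is exactly what is needed to split the hedging term into $H^x$ and $H^\alpha$ as stated in~\eqref{eq:opt_policy_KF}; the paper's proof leaves this decomposition implicit.
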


The detailed derivation of the optimal solution and the explicit ODEs are given in Appendix \ref{Appsec:KF}.

Corollary \ref{prop:Policy_KF} shows that the optimal policy now includes a hedging demand \(H_\alpha\) for changes in the posterior mean of \(\alpha\). The innovations in the asset prices that derive the learning process (i.e., the updates to \(\hat{\alpha}\)) also represent a source of risk to the investor's portfolio. The term \(H_\alpha\) is the demand for assets that will perform well when the drift estimate is revised downwards, thus providing a hedge. Notably, because the state variables \(x\) and \(\hat{\alpha}\) are correlated through the common Brownian motion \(W^\mathbb{Q}\), the hedging demands are coupled. The optimal hedge against factor risk \(H_x\) depends on the current estimate \(\hat{\alpha}\), and conversely, the hedge against estimation risk \(H_\alpha\) depends on the current state of the factors \(x\).

 \begin{remark}
    One can apply the decomposition from Theorem \ref{thm:Optimal_policy}
    to this augmented problem to isolate the effect of expert views. 
    
    \end{remark}

\begin{remark}
\label{remark:alpha_mean-reverting}
    The model can be extended so that the \emph{true} \(\alpha\) follows mean-reverting dynamics and may be correlated with the risk factors.  
In that case, it follows from \eqref{eq:M(t)} that the optimal policy in \eqref{eq:opt_policy_KF} retains the same structural form, but the coefficients of the augmented state dynamics in \eqref{eq:M(t)} change to reflect the stochastic evolution~of~\(\alpha\).
\end{remark}




\section{Experiments}
\label{sec7}
In this section, we evaluate the impact of forward-looking  views on portfolio performance. We compare the optimal dynamic policy from Theorem \ref{thm:Optimal_policy} against two  benchmarks.
\begin{itemize}
    \item \textbf{Dynamic (No Views)}: This investor uses the optimal dynamic policy \eqref{eq:pi_0}. Comparing to this benchmark isolates the value of forward-looking information
    
    \item \textbf{Static (Black-Litterman Variant)}: This investor incorporates views using a one-step Bayesian update and rebalances myopically at each period. This follows the classic framework of \cite{Black_1991, Black_1992}. A full derivation is in Appendix \ref{App:BL}\footnote{Note that our implementation of the Black-Litterman approach uses our results for the conditional dynamics derived in Section \ref{sec3}.}. Comparing our main strategy to this benchmark isolates the value of dynamic intertemporal hedging over a static approach.

\end{itemize}

\subsubsection*{Data and Simulation Setup} Our emprical analysis uses five Exchange-Traded Funds (ETFs): SPY (S\&P 500), DBC (commodities), LQD (corporate bonds), VNQ (real estate), and TLT (long-term Treasury bonds). We use their respective dividend yields as predictive signals as they are economically interpretable and have been shown to have good predictive power in equity returns (see, for example, \cite{Xia,Cornell2012}). Parameters for factor dynamics, asset returns, and dividend yield predictors are calibrated using historical data, with details provided in Appendix~\ref{App:Calibration}.

We simulate a one-year investment horizon (\(T = 1\)) with monthly rebalancing.  In each simulation run, we generate factors, asset prices, and views according to \eqref{eq:dX}, \eqref{eq:PricesVec}, and \eqref{eq:Y_view}, respectively. We consider \mbox{\(K = 3\)} forward-looking on the factors realization at \(t = T\). We set the views matrix $P = (p_1, p_2, p_3)^\top \in \mathbb{R}^{K \times N}$ with $p_1 = (1, -1, 0 ,0 ,0)$, $p_2 = (0, 0, 1 ,0, 0)$, and $p_3 = (0, 1, 0, 0 ,-1)$. Views are given about the difference in dividend-yield between SPY and DBC, and between DBC and TLT, and about the dividend-yield of LQD, while no view is given about VNQ.

The confidence in these views is controlled by the parameter \(\tau\) in the view covariance matrix 
\begin{equation}
    \label{eq:Cov_Sim}
    \Omega = \tau \; P \mathbb{V}[X(T) \, | \, X(0)] P^\top
\end{equation}
following  \cite{Meucci2006, He2002}.



\subsection{Views vs. No views: Quantifying the Value of Information}
We first evaluate the direct impact of incorporating expert views into the dynamic model. Following \cite{IDZOREK200717}, we set the baseline confidence parameter in \eqref{eq:Cov_Sim} to \(\tau = 0.05\).

\begin{figure}[ht]
     \centering
     \begin{subfigure}[b]{0.49\textwidth}
         \centering
             \captionsetup{font=footnotesize} 
          \caption{Efficient Frontier}    
\includegraphics[width=\textwidth]{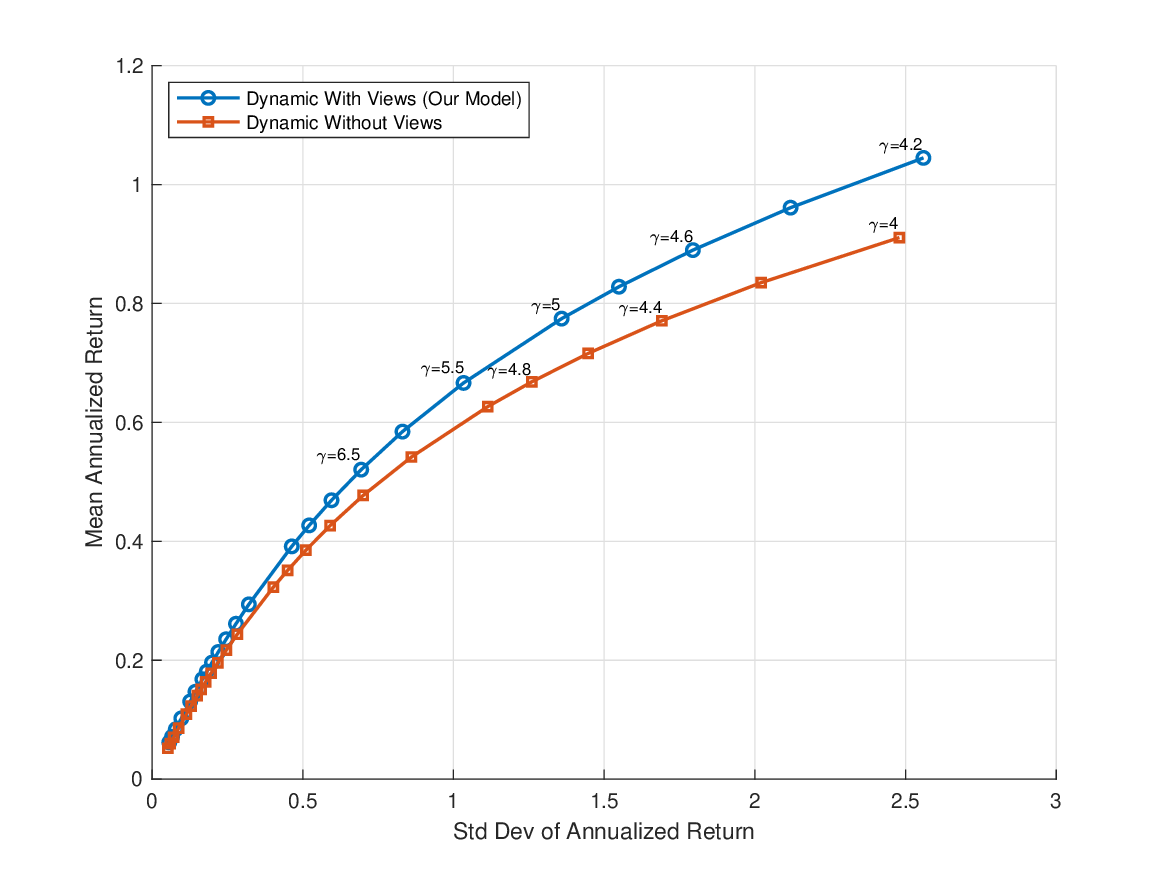}
         \label{fig:EF_CER_a}
     \end{subfigure}
     \begin{subfigure}[b]{0.49\textwidth}
         \centering
         \captionsetup{font=footnotesize}
          \caption{Certainty Equivalent Return (CER)}
         \includegraphics[width=\textwidth]{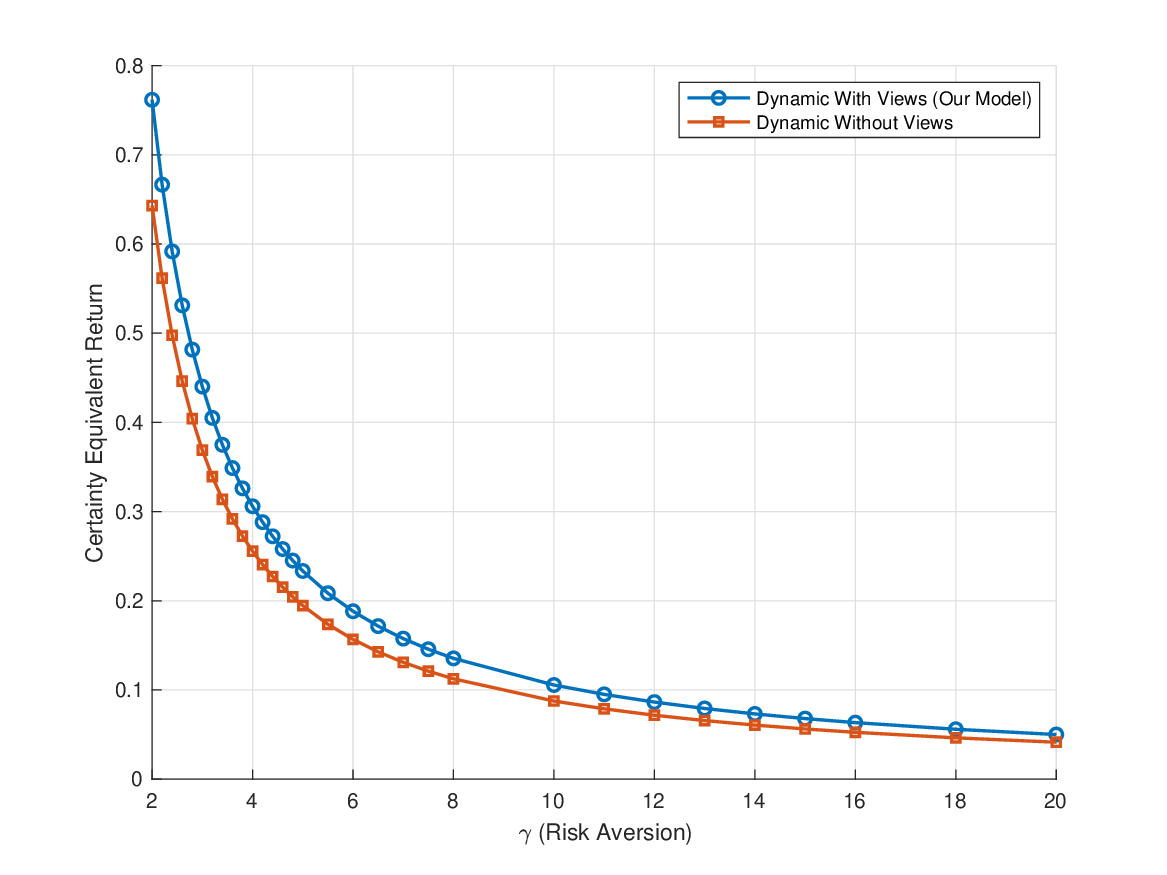}
         \label{fig:EF_CER_b}
     \end{subfigure}
        \caption{\textbf{Performance Comparison: With Views vs. No Views.} This figure illustrates the performance benefits of incorporating experts views into the dynamic model. Panel (a) shows that the efficient frontier for the portfolio with views consistently dominates the one without, leading to a higher expected return for any given level of risk. Panel (b) quantifies this by plotting the Certainty Equivalent Return (CER) against investor risk aversion.}
        \label{fig:EF_CER_dynamic}
\end{figure}

Figure \ref{fig:EF_CER_dynamic} shows that incorporating views yields clear performance benefits. Figure \ref{fig:EF_CER_a} shows that the efficient frontier for the strategy with views strictly dominates the no-views case, resulting in higher returns for any level of risk, with a more clear difference for less risk-averse investors. Figure \ref{fig:EF_CER_b} compares the Certainty Equivalent Return (CER) between the two policies, where the CER is defined as the constant rate of return \(r_c\) that provides  the same utility as the investment strategy. It satisfies 
\[U(z_0 e^{T r_{c}}) = \mathbb E\big[U(Z(T) \, | \, Z(0) = z_0, X(0)\big] \]
where \(U(z) = z^{1-\gamma}/(1-\gamma)\) is the investor's utility. The figure shows that the policy with views has a uniformly higher CER, and the performance gap is more significant for less risk-averse investors highlighting that they benefit most from the aggressive intertemporal hedging that the views enable.




To understand the drivers of this performance gain, we analyze its sensitivity to view precision and market structure.
\begin{figure}[ht]
    \centering
    \includegraphics[width=0.5\linewidth]{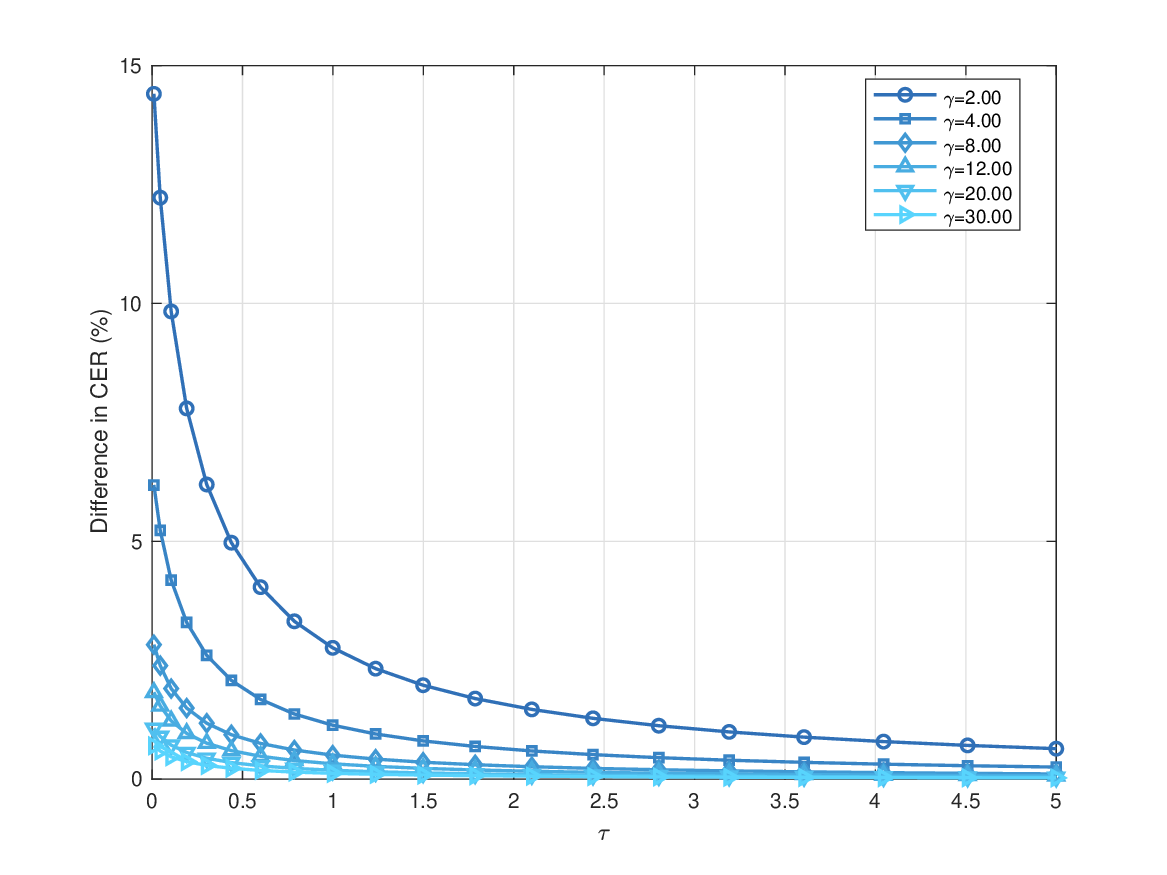}
    \caption{\textbf{Impact of View Precision on Performance Gains.} This figure displays the difference in CER between the dynamic strategies with and without views as a functions of the noise (\(\tau\)) in the views. The performance gain is positive for all noise levels and decreases as the views become less precise (i.e., as \(\tau\) increases).}
    \label{fig:DeltaCER_tau}
\end{figure}

Figure \ref{fig:DeltaCER_tau} compares the difference in CER between the policy with views and the one without. It shows that the benefit of the views is directly related to their precision. The performance gain is positive for all noise levels but decreases as the views become less precise (i.e., as \(\tau\) increases). This is intuitive: as confidence in the views diminishes, the optimal policy converges to the no-views case. Notably, significant performance gains persist even for relatively high levels of noise (\(\tau > 1\)).


To explore how market structure affects this performance gain, we examine the role of the correlation between asset and factor innovations. To do so, we modify the model to explicitly control the correlation between the Brownian motion driving the factors and assets
\begin{equation*}
\begin{split}
     dX(t) &= \Theta(\mu - X(t)) dt + L^X dW^X(t),\\
    dS(t) &= D(S(t)) \big((\alpha + \beta X(t)) dt + L^S dW^S(t)\big)
\end{split}
\end{equation*}
where we set \(dW^S(t) (dW^X(t))^\top = \rho \, dt \,  I_{N'}\). The instantaneous covariance between asset and factor shocks is now \(\rho \, L^S (L^X)^\top\). The parameter \(\rho \in [0,1]\) allows us to analyze the full spectrum from independent shocks \((\rho = 0)\) to the fully correlated shocks of our main model \((\rho = 1)\).


\begin{figure}[ht]
    \centering
    \includegraphics[width=0.5\linewidth]{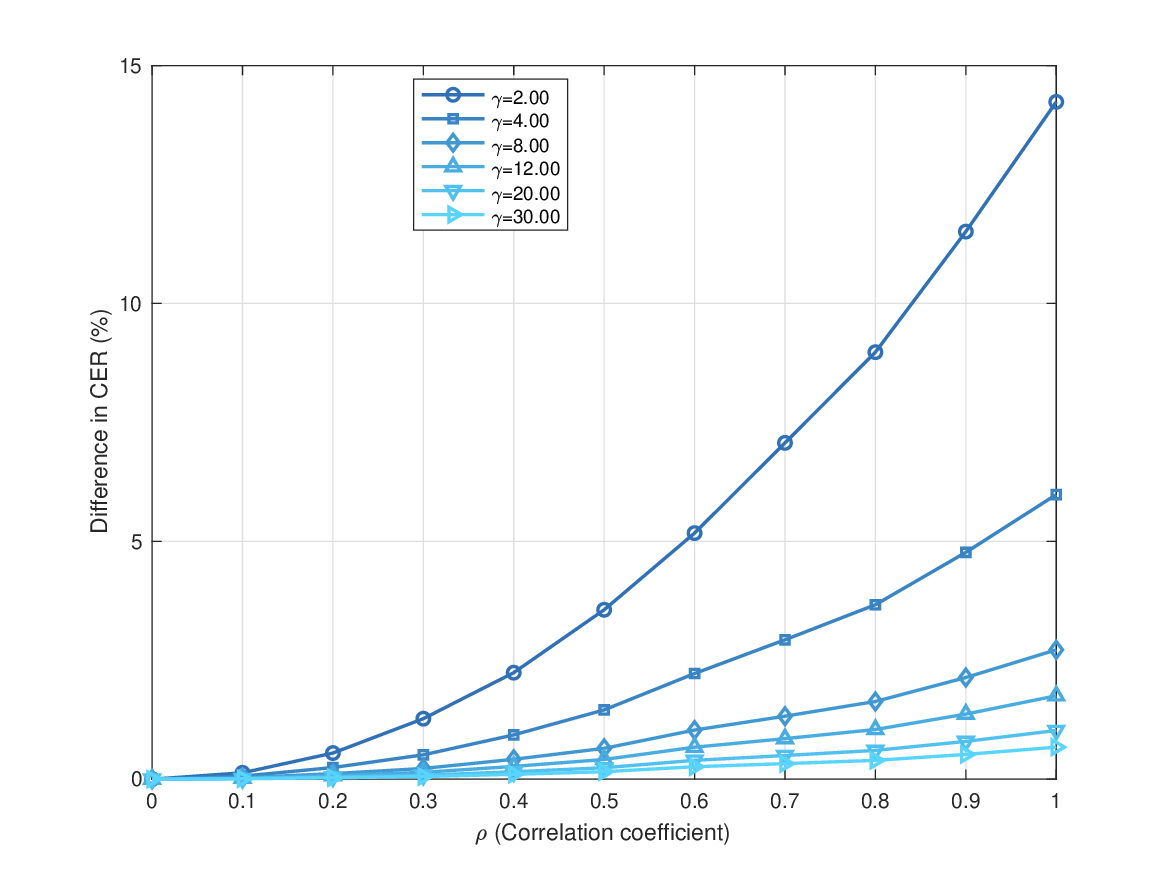}
    \caption{\textbf{Impact of Market Correlation on Performance Gains.} This figure plots the difference in CER as a function of the correlation \(\rho\) between the shocks driving asset prices and risk factors. The performance benefit from incorporating views increases with the correlation. This highlights that forward-looking information on factors is most valuable when the unpredictable shocks in those factors have a strong impact on asset returns.}
    \label{fig:DeltaCER_rho}
\end{figure}

Figure \ref{fig:DeltaCER_rho} shows that the value of information on factors is contingent on their connection to asset prices. The performance gain from views increases with the correlation \(\rho\). As prescribed by the optimal policy in Theorem \ref{thm:Optimal_policy}, if the factors are uncorrelated with assets \((\rho = 0)\), the hedging demand driven by the views disappears, and the performance gain vanishes.


\subsection{Dynamic vs. Static: Quantifying the Value of Intertemporal Hedging}

We now compare the dynamic optimal policies, both with and without views, to the static, periodically rebalanced Black-Litterman variant to isolate the value of the intertemporal hedging component. While both strategies use the same views and rebalancing frequency, the dynamic policy optimizes over the entire investment horizon, whereas the static policy is myopic.

Figure \ref{fig:Static_vs_Dynamic_a} shows that the dynamic strategies deliver a superior risk-return tradeoff. The efficient frontiers for the dynamic policies (both with and without views) strictly dominate the frontier for the static strategy. This shows the significant economic value of the intertemporal hedging demand, which allows the dynamic investor to manage future shifts in the risk factors. The efficiency gain is so large that the dynamic policy without views still outperforms the static policy with views.




\begin{figure}[ht]
     \centering
     \begin{subfigure}[b]{0.49\textwidth}
         \centering
             \captionsetup{font=footnotesize}
          \caption{Efficient Frontier}
 
\includegraphics[width=\textwidth]{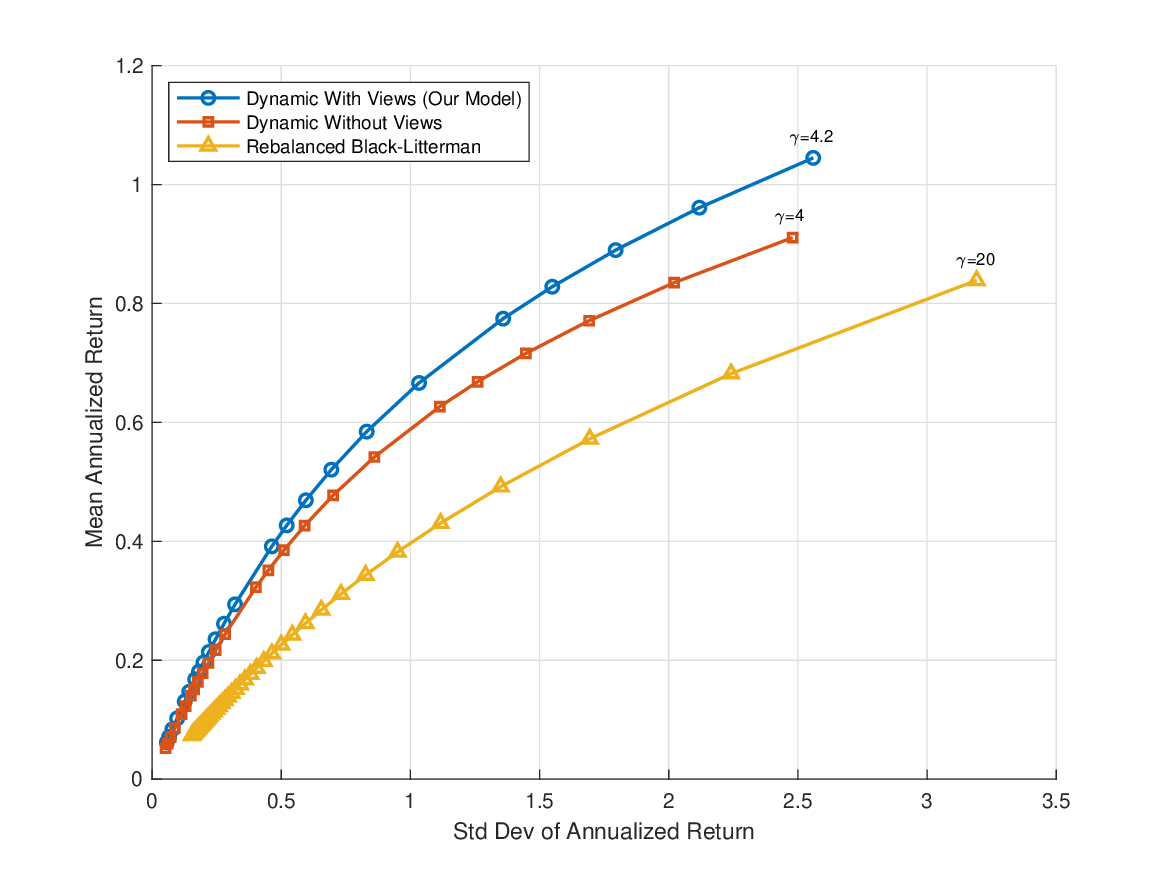}
         \label{fig:Static_vs_Dynamic_a}
     \end{subfigure}
     \begin{subfigure}[b]{0.49\textwidth}
         \centering
         \captionsetup{font=footnotesize}
          \caption{Turnover}
         \includegraphics[width=\textwidth]{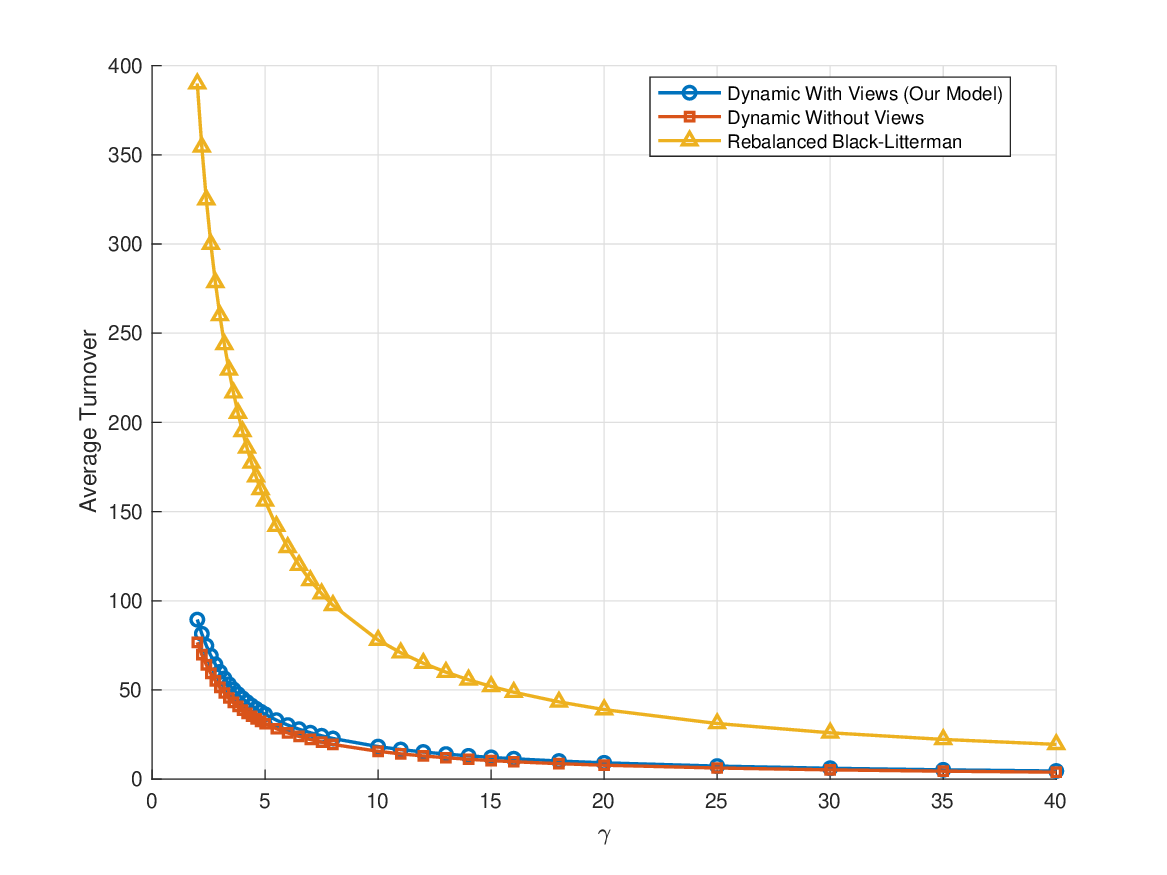}
         \label{fig:Static_vs_Dynamic_b}
     \end{subfigure}
        \caption{\textbf{Performance and Turnover Comparison: Dynamic vs. Static Strategies.} This figure compares the performance and trading activity of the proposed dynamic strategies against a static Black-Litterman variant. Panel (a) shows that the efficient frontier of the dynamic strategies (both with and without views) dominate that of the static approach, which is a result of the dynamic strategies' intertemporal hedging component. Panel (b) reveals that the dynamic strategies achieve this superior performance with significantly lower portfolio turnover, as the hedging component stabilizes the portfolio and reduces the need for frequent, aggressive rebalancing.}
        \label{fig:Static_vs_Dynamic}
\end{figure}

We now analyze the trading volume of these strategies by computing their turnover. Let \( n_i(t) = \pi_i(t) \frac{Z(t)}{S_i(t)}\)    
be the number of shares of asset $i \in [N]$ held at time $t$. We define turnover as the expected total absolute change in holdings during the investment horizon 
\[
\text{Turnover} = \sum_{i \in [N]} \mathbb{E}\left[ \sum_{t \in \mathcal{T}} \left| n_i(t+\Delta t) - n_i(t) \right| \right].
\]
A higher turnover indicates higher trading costs.

Figure \ref{fig:Static_vs_Dynamic_b} plots turnover against the risk aversion parameter \(\gamma\). When compared at the same level of \(\gamma\), the dynamic strategies exhibit dramatically lower turnover. This is a direct consequence of the hedging demand; by hedging against future factor movements, the dynamic portfolio remains more stable and requires fewer, less aggressive adjustments. The myopic static investor, by contrast, must react sharply to every new piece of information at each rebalancing date, incurring higher trading volume.

However, comparing turnover for the same \(\gamma\) is not an apples-to-apples comparison, as the resulting portfolios have different risk profiles. A more meaningful comparison is between portfolios with the same realized risk (i.e., the same standard deviation on the x-axis of Panel (a)). When viewed on this risk-adjusted basis, the turnover for the dynamic strategy is of a similar magnitude to the static one. However, for the same level of risk and turnover, the dynamic strategies achieve higher expected returns.

This is a notable result. Given that the dynamic policy maintains an active intertemporal hedge against future factor movements, one might expect it to require higher trading volumes. However, Figure \ref{fig:Static_vs_Dynamic} shows that it achieves a dominant risk-return profile without incurring additional trading costs.

Finally, consider the two dynamic variants. Although the policy with views hedges more aggressively than the one without views, Figure \ref{fig:Static_vs_Dynamic} shows their turnover levels are nearly identical, with the strategy with views having a better risk-return profile.

\section{Conclusion}
\label{sec8}
This paper addresses the challenge of integrating forward-looking expert forecasts into continuous-time dynamic factor models. We develop a continuous-time framework for modeling expert views and deriving the conditional dynamics of factors and assets based on these views. Our primarily theoretical contribution is deriving the complete conditional dynamics for both factors and asset prices. To offer deeper intuition into these conditional dynamics, we introduce and characterize the Mean-reverting Bridge (MrB), a stochastic process that generalizes the classical Brownian bridge to mean-reverting process. Additionally, we show that when factor and asset price innovations are correlated, expert views alter not only factor dynamics but the asset price process as well through a new time-dependent covariate that depends on the view.

We apply our framework to dynamic portfolio allocation. We derive the optimal investment policy and show that forward-looking views influence the optimal allocation strategy exclusively through the intertemporal hedging demand. Investors with views hedge more aggressively compared to those without. We further extend the core model to accommodate drift uncertainty, where decision-makers must infer drift parameters from observed data. Beyond the particular application considered, the developed framework provides a direct approach for modeling expert views and integrating them into dynamic factor models, with applications beyond portfolio allocation.

\section*{Acknowledgements}

Anas Abdelhakmi is partially supported by `Fondation Ibn Rochd Pour Les Sciences et L'Innovation (FIRSI)', under its 2021 SEEDS Initiative. Andrew Lim is supported by the  Ministry of Education, Singapore under its 2024 Academic Research Fund Tier 2 grant call (Award ref: MOE-T2EP20224-0018).

\bibliographystyle{plainnat} 
\bibliography{reference}

\newpage

\begin{appendices}
\renewcommand{\theequation}{EC.\arabic{equation}}
\setcounter{equation}{0}

\setcounter{page}{1}
\renewcommand{\thepage}{ec\arabic{page}}

\section{Experiments Calibration}
\label{App:Calibration}

For the empirical analysis, we select five ETFs: a broad equity market ETF (SPY), a commodity ETF (DBC), a corporate bond ETF (LQD), a real estate ETF (VNQ), and a long-term treasury bond ETF (TLT). The risk factors, $X(t)$, are the corresponding trailing 12-month dividend yields for each of these ETFs.

The parameters for the continuous-time model described in Section \ref{sec21} are calibrated using monthly historical data from January 2005 to December 2019 ($\Delta t = 1/12$), which was retrieved from Yahoo finance. The estimation procedure is detailed below.

\subsection{Asset Price Dynamics $(\alpha, \beta)$}
The SDE for the price of asset $i$, as given by \eqref{eq:PricesVec}, is
\begin{equation*}
 \frac{dS_i(t)}{S_i(t)} = \left(\alpha_i + \sum_{j=1}^{d} \beta_{ij} X_{j}(t)\right) dt + L^S_i dW(t)
\end{equation*}
We assume the matrix $\beta$ is diagonal as economic evidence shows an asset's own valuation ratio (e.g., dividend yield) is the strongest predictor of its returns, while cross-asset predictability is weak and unstable (see, for example \cite{Campbell1988,Chochrane2008}). This simplifies the model without losing explanatory power, as off-diagonal terms were statistically insignificant in our preliminary tests.

Using It\^o's lemma, the corresponding dynamics for the log-price are
\begin{equation*}
 d(\log S_i(t)) = \left(\alpha_i + \sum_{j=1}^{d} \beta_{ij} X_{j}(t) - \frac{1}{2}\Sigma^S_{ii}\right) dt + L^S_i dW(t)
\end{equation*}
where $\Sigma^S_{ii} = (L^S (L^S)^T)_{ii}$ is the $i$-th diagonal element of the asset covariance matrix. A first-order discretization of this process gives the following linear relationship for the log-returns
\begin{equation*}
 \log\left(\frac{S_{i,t+\Delta t}}{S_{i,t}}\right) \approx \left(\alpha_i - \frac{1}{2}\Sigma^S_{ii}\right)\Delta t + \left(\sum_{j=1}^{d} \beta_{ij} X_{j,t}\right) \Delta t + \epsilon_{S_i, t+\Delta t}
\end{equation*}
For each asset $i=1, \dots, 5$, we estimate the parameters by running a regression on the discrete-time model
\begin{equation*}
 \log\left(\frac{S_{i,t+\Delta t}}{S_{i,t}}\right) = c_i + b_{ii} X_{i,t} + \epsilon_{S_i, t+\Delta t}
\end{equation*}
While we considered regularized methods such as Ridge and Lasso, Ordinary Least Squares (OLS) regression yielded more stable results. Given that each regression involves only a single predictor variable, the complexity that Ridge and Lasso are designed to manage is not present, making OLS the most direct and effective estimation method. The continuous-time parameters are then recovered from the estimated regression coefficients $\hat{c}_i$ and $\hat{b}_{ii}$
\begin{align*}
 \hat{\beta}_{ii} &= \frac{\hat{b}_{ii}}{\Delta t} \\
 \hat{\alpha}_i &= \frac{\hat{c}_i}{\Delta t} + \frac{1}{2}\hat{\Sigma}^S_{ii}
\end{align*}
where $\hat{\Sigma}^S_{ii}$ is the annualized sample variance of the asset's log-returns. The calibrated parameters are
\begin{eqnarray*}
\begin{split}
    \alpha &= \begin{pmatrix} -0.2044 & -0.0320 & -0.0589 & -0.1824 & -0.1501 \end{pmatrix}^\top, \\
\beta &= \mathrm{diag}(14.1731, 3.9467, 1.7666, 5.2718, 5.6344).
\end{split}
\end{eqnarray*}

\subsection{Factor Dynamics $(\Theta, \mu)$}
The factors are the dividend yields, which we model as Ornstein-Uhlenbeck processes. Assuming a diagonal mean-reversion matrix $\Theta$, the SDE for each factor $j$ is
\begin{equation*}
 dX_j(t) = \Theta_{jj}(\mu_j - X_j(t))dt + L^X_j dW(t)
\end{equation*}
We assume a diagonal mean-reversion matrix $\Theta$ because each factor is constructed to represent a distinct economic signal (e.g., equity, commodity, credit yields). Critically, specifying a diagonal $\Theta$ does not imply that the factors are uncorrelated; rather, the significant cross-sectional dependence between factors is captured more directly through the off-diagonal elements of the joint diffusion covariance matrix, $\hat{\Sigma}^X$.

The exact discrete-time solution to this SDE is a vector AR(1) process
\begin{equation*}
 X_{j,t+\Delta t} = \mu_j(1 - e^{-\Theta_{jj}\Delta t}) + e^{-\Theta_{jj}\Delta t} X_{j,t} + \epsilon_{X_j, t+\Delta t}
\end{equation*}
We estimate this by fitting the AR(1) model $X_{j,t+\Delta t} = a_j + b_j X_{j,t} + \epsilon_{X_j, t+\Delta t}$ via OLS for each factor $j=1,\dots,5$. The continuous-time parameters are recovered from the estimated AR(1) coefficients $\hat{a}_j$ and $\hat{b}_j$ as follows
\begin{align*}
 \hat{\Theta}_{jj} &= -\frac{\log(\hat{b}_j)}{\Delta t} \\
 \hat{\mu}_j &= \frac{\hat{a}_j}{1 - \hat{b}_j}
\end{align*}
The calibrated parameters for the factor dynamics are
\begin{eqnarray*}
\begin{split}
\mu &= \begin{pmatrix} 0.0200 & 0.0068 & 0.0265 & 0.0429 & 0.0235 \end{pmatrix}^\top,\\
\Theta &= \mathrm{diag}(0.7412, 0.6080, 0.0677, 0.7872, 0.1751).
\end{split}
\end{eqnarray*}

\subsection{Diffusion and Covariance Structure $(L^X, L^S)$}
To ensure the model reproduces the complete correlation structure of the market, we calibrate the diffusion matrices $L^X$ and $L^S$ jointly from the data. 
Let $S_t$ be the vector of log-returns at time $t$ and $X_t$ be the vector of dividend-yield factors. We first compute their sample means, $\bar{S}$ and $\bar{X}$. Then, the monthly sample covariance matrices are calculated as
\begin{align*}
\hat{\Sigma}^S_{\text{monthly}} &= \frac{1}{T-1} \sum_{t=1}^{T} (S_t - \bar{S})(S_t - \bar{S})^T \\
\hat{\Sigma}^X_{\text{monthly}} &= \frac{1}{T-1} \sum_{t=1}^{T} (X_t - \bar{X})(X_t - \bar{X})^T \\
\hat{\Sigma}^{XS}_{\text{monthly}} &= \frac{1}{T-1} \sum_{t=1}^{T} (X_t - \bar{X})(S_t - \bar{S})^T
\end{align*}
These estimates are then annualized by dividing by the time step $\Delta t = 1/12$
\begin{equation*}
\hat{\Sigma}^S = \frac{\hat{\Sigma}^S_{\text{monthly}}}{\Delta t}, \quad \hat{\Sigma}^X = \frac{\hat{\Sigma}^X_{\text{monthly}}}{\Delta t}, \quad \hat{\Sigma}^{XS} = \frac{\hat{\Sigma}^{XS}_{\text{monthly}}}{\Delta t}
\end{equation*}
With these annualized components, we construct the full $(d+N) \times (d+N)$ joint empirical covariance matrix
\begin{equation*}
 \hat{\Sigma} =
 \begin{pmatrix}
 \hat{\Sigma}^X & \hat{\Sigma}^{XS} \\
 (\hat{\Sigma}^{XS})^T & \hat{\Sigma}^S
 \end{pmatrix}
\end{equation*}
This joint matrix fully captures the empirically observed variance and covariance structure between all asset returns and factors. The joint diffusion matrix for the entire system, $\hat{L} \in \mathbb{R}^{(d+N) \times (d+N)}$, is then calculated by taking the Cholesky decomposition of this empirical covariance matrix
\begin{equation*}
 \hat{L} \hat{L}^T = \hat{\Sigma}
\end{equation*}
This procedure determines both the structure of $\hat{L}$ and the required dimension of the common Brownian motion, $N' = d+N=10$. The resulting matrix $\hat{L}$ is then partitioned to obtain the final calibrated diffusion matrices
\begin{equation*}
 \hat{L} =
 \begin{pmatrix}
 \hat{L}^X \\
 \hat{L}^S
 \end{pmatrix}
\end{equation*}
where $\hat{L}^X$ consists of the first $d=5$ rows of $\hat{L}$, and $\hat{L}^S$ consists of the final $N=5$ rows. 
We find 


\begin{align*}
\hat L^{X} &=
\begin{pmatrix}
 1.05\times10^{-2} & 0 & 0 & 0 & 0 & 0 & 0 & 0 & 0 & 0 \\
 8.52\times10^{-3} & 2.92\times10^{-2} & 0 & 0 & 0 & 0 & 0 & 0 & 0 & 0 \\
 1.07\times10^{-2} & 1.73\times10^{-2} & 2.39\times10^{-2} & 0 & 0 & 0 & 0 & 0 & 0 & 0 \\
 4.08\times10^{-2} & 1.29\times10^{-2} & 4.94\times10^{-3} & 2.64\times10^{-2} & 0 & 0 & 0 & 0 & 0 & 0 \\
 8.31\times10^{-4} & 1.63\times10^{-2} & 2.36\times10^{-2} & 3.63\times10^{-3} & 7.60\times10^{-3} & 0 & 0 & 0 & 0 & 0
\end{pmatrix},
\\[8pt]
\hat L^{S} &= \resizebox{\matrixwidth}{!}{$\displaystyle
\begin{pmatrix}
  9.31\times10^{-3} & -2.65\times10^{-2} & -1.38\times10^{-2} &  7.39\times10^{-3} & -2.03\times10^{-3} &  1.41\times10^{-1} & 0 & 0 & 0 & 0 \\
  2.99\times10^{-3} &  8.44\times10^{-3} & -2.70\times10^{-3} &  1.74\times10^{-2} & -2.90\times10^{-2} &  1.04\times10^{-1} & 1.56\times10^{-1} & 0 & 0 & 0 \\
  4.22\times10^{-3} & -9.62\times10^{-4} &  3.47\times10^{-3} &  1.47\times10^{-3} & -4.55\times10^{-3} &  2.13\times10^{-2} & -5.13\times10^{-3} & 6.80\times10^{-2} & 0 & 0 \\
  3.30\times10^{-3} & -4.49\times10^{-2} &  1.12\times10^{-2} &  2.55\times10^{-2} & -6.98\times10^{-3} &  1.65\times10^{-1} & -1.32\times10^{-2} & 4.91\times10^{-2} & 1.51\times10^{-1} & 0 \\
 -2.07\times10^{-2} &  4.26\times10^{-3} &  1.49\times10^{-2} &  4.06\times10^{-4} &  4.21\times10^{-3} & -3.69\times10^{-2} & -2.62\times10^{-2} & 8.03\times10^{-2} & 1.84\times10^{-2} & 8.83\times10^{-2}
\end{pmatrix}$}.
\end{align*}

\section{Section \ref{sec3}}
\subsection{Proof of Proposition \ref{prop:X^y}}
\label{App:Proof_3.1}
The proof of this proposition is split into two parts: We first derive the drift and volatility of the conditional process \(X^y(t)\) using a variant of Kalman smoothing equations, then use their expressions to prove that the conditional process can be written as a solution to a Stochastic Differential Equation (SDE).

We start by deriving the conditional distribution of the risk factors.

\subsubsection{Conditional Distribution of the Risk Factors}
The risk factors \(X(t) \in \mathbb{R}^d\) are mean reverting and solve the SDE 
\begin{eqnarray*}
    dX(t) = \Theta (\mu - X(t)) dt + L^X dW(t).
\end{eqnarray*}
It follows that given an initial value \(X(0) = x_0\), the solution to the above SDE is 
\begin{eqnarray}
\label{ECeq:X}
    X(t) =  e^{-\Theta t} x_0 + (I_d - e^{-\Theta t}) \mu + \int_0^t e^{-\Theta (t-s)} dW(s),
\end{eqnarray}
therefore, \(X(t)\) is Gaussian with 
\begin{eqnarray*}
    X(t) \sim \mathcal{N}\big(e^{-\Theta t} x_0 + (I_d - e^{-\Theta t}) \mu, \, \int_0^t e^{-\Theta (t-u)} \Sigma^X e^{-\Theta^\top (t-u)} du \big).
\end{eqnarray*}
Now recall from \ref{sec22}, that the expert forward-looking view is also Gaussian
\begin{eqnarray*}
    Y(0,T) \, | \, X(T) = P X(T) + \epsilon \sim \mathcal{N}\left(P X(T), \, \Omega\right) \in \mathbb{R}^K.
\end{eqnarray*}
Therefore, the conditional risk factor \(X^y(t) = X(t) \, | \, (Y(0,T) = y)\) is also Gaussian. We consider its dynamics defined as
\begin{equation*}
    dX(t) \,|\, (Y(0,T) = y,  \{X(\tau)_{\tau \leq t}\}) = \lim_{dt \to 0 }\big(X(t + dt) - X(t)\big) \,|\, \big(Y(0,T) = y, \{X(\tau)_{\tau \leq t}\}\big),
\end{equation*}
that is the limiting distribution of the risk factor over the interval $[t, t +dt]$, conditioned on the past observed factors  $\{X(\tau), \tau \leq t\}$, and the forward-looking views vector $y$. Observe from \eqref{ECeq:X}, that the term \[\int_0^t e^{-\Theta (t-s)} dW(s)\] is a martingale. Therefore \(X\) is Markovian and the information contained in the historical data $\{X(\tau), \tau \leq t\}$ is all stored in the last state $X(t)$, thus
\begin{equation*}
   \big( X(t + dt) - X(t) \big) \,|\, \big(Y(0,T) = y, \{X(\tau)_{\tau \leq t}\}\big) = \big( X(t + dt) - X(t) \big) \,|\, \big(Y(0,T) = y, X(t) \big).
\end{equation*}
Furthermore, as $X(t)$ and $Y(0,T)$ are Gaussian, it follows that $\big(X(t+dt) | Y(0,T) = y, X(t)\big)$ is also Gaussian. Thus, it is fully identified by its mean and covariance matrix that we derive next.

\paragraph{Conditional Drift}
Let \(dt > 0\), we have
\begin{equation}
\label{eq:EC_E[dx|Y]}
    \begin{split}
        \mathbb{E}\bigl[X(t+dt) \, | \, X(t), \, Y(0,T) = y \bigr] =   & \mathbb{E}\bigl[X(t+dt) \, | \, X(t)\bigr] \\& + \Cov\left(X(t+dt), Y(0,T) \, | \, X(t) \right) \mathbb{V}[Y(0,T) \, | \, X(t)]^{-1} \left(y - \mathbb{E}\bigl[Y(0,T) \, | \, X(t)\bigr]\right).\\ 
    \end{split}
\end{equation}
we now derive the expressions of each term in \eqref{eq:EC_E[dx|Y]} separately as \(dt \to 0\).

First, from the SDE of \(X\) we directly get
\begin{equation*}
     \mathbb{E}\bigl[X(t+dt) \, | \, X(t)\bigr] =_{dt \to 0} X(t) + \Theta (\mu - X(t)) dt + O(dt^2).
\end{equation*}
Next, by construction, the conditional expectation of the views vector is
\begin{equation*}
\begin{split}
         \mathbb{E}\bigl[Y(0,T) \, | \, X(t)\bigr] &=   P\mathbb{E}\bigl[X(T) \, | \, X(t)\bigr]\\
         &= P \left(e^{-\Theta (T-t)} X(t) + (1 - e^{-\Theta (T-t)}) \mu  \right),
\end{split}
\end{equation*}
where the second equality follows from the properties of \(X\). Similarly, the conditional covariance is
\begin{equation*}
\begin{split}
         \mathbb{V}\bigl[Y(0,T) \, | \, X(t)\bigr] &=   P\mathbb{V}\bigl[X(T) \, | \, X(t)\bigr]P^\top + \mathbb{V}[\epsilon]\\
         &= P \left(\int_t^T e^{-\Theta (T-u)} \Sigma^X e^{-\Theta (T-u)} du \right)P^\top + \Omega,
\end{split}
\end{equation*}
to simplify the expression of the covariance, consider the long-run covariance matrix of \(X\) denoted by \(\Sigma\), which satisfies
\begin{equation*}
    \Theta \Sigma + \Sigma \Theta^\top = \Sigma^X,
\end{equation*}
it follows that 
\begin{equation*}
\begin{split}
         \mathbb{V}\bigl[Y(0,T) \, | \, X(t)\bigr]
         &= P \left(\int_t^T e^{-\Theta (T-u)} ( \Theta \Sigma + \Sigma \Theta^\top) e^{-\Theta (T-u)} du \right)P^\top + \Omega\\
         &= \Sigma - e^{-\Theta(T-t)} \Sigma  e^{-\Theta^\top(T-t)} + \Omega.
\end{split}
\end{equation*}
Finally, the following result gives the expression of the conditional covariance 
\begin{lemma}\label{lemma:EC_covarianceXY} If \(X(t)\) and \(Y(0,T)\) satisfy \eqref{eq:dX} and \eqref{eq:Y_view}, respectively, then
    \begin{equation*}
        \Cov\left(X(t+dt), Y(0,T) \, | \, X(t) \right)  =_{dt \to 0} (P e^{-\Theta (T-t)} \Sigma^X)^\top dt + O(dt^2).
    \end{equation*}
\end{lemma}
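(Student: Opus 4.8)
The plan is to compute the conditional covariance directly from the explicit (mild) solution of the factor SDE, exploiting the Markov property and the independence of the view noise. Conditioning everything on $X(t)=x$, the solution of \eqref{eq:dX} over $[t,t+dt]$ is
\[
X(t+dt) = e^{-\Theta\,dt}X(t) + (I_d - e^{-\Theta\,dt})\mu + \int_t^{t+dt} e^{-\Theta(t+dt-s)} L^X\,dW(s),
\]
and over $[t,T]$,
\[
X(T) = e^{-\Theta(T-t)}X(t) + (I_d - e^{-\Theta(T-t)})\mu + \int_t^{T} e^{-\Theta(T-s)} L^X\,dW(s).
\]
Since $Y(0,T) = PX(T) + \epsilon$ with $\epsilon$ independent of $W$ (hence of $X(t+dt)$) and of $X(t)$, we have
$\Cov\!\big(X(t+dt),Y(0,T)\mid X(t)\big) = \Cov\!\big(X(t+dt),X(T)\mid X(t)\big)\,P^\top$, so it suffices to evaluate the factor-factor conditional covariance.

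Next I would apply the Itô isometry to the two stochastic integrals above. The deterministic terms contribute nothing to the covariance, and because the Brownian increments on disjoint intervals are independent with zero mean, only the overlap $[t,t+dt]$ survives:
\[
\Cov\!\big(X(t+dt),X(T)\mid X(t)\big)
= \int_t^{t+dt} e^{-\Theta(t+dt-s)}\,L^X (L^X)^\top\, e^{-\Theta^\top(T-s)}\,ds
= \int_t^{t+dt} e^{-\Theta(t+dt-s)}\,\Sigma^X\, e^{-\Theta^\top(T-s)}\,ds.
\]
Then I would let $dt\to 0$: for $s\in[t,t+dt]$ we have $t+dt-s\le dt$, so $e^{-\Theta(t+dt-s)} = I_d + O(dt)$ and $e^{-\Theta^\top(T-s)} = e^{-\Theta^\top(T-t)} + O(dt)$ uniformly on the interval; the integral therefore equals $\Sigma^X e^{-\Theta^\top(T-t)}\,dt + O(dt^2)$. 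Multiplying on the right by $P^\top$ and using the symmetry of $\Sigma^X$ (so that $\Sigma^X e^{-\Theta^\top(T-t)}P^\top = \big(P e^{-\Theta(T-t)}\Sigma^X\big)^\top$) yields the claimed expression.

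I do not expect a genuine obstacle here; the argument is essentially bookkeeping. The only points requiring a little care are (i) justifying that conditioning on the whole past $\{X(\tau)_{\tau\le t}\}$ reduces to conditioning on $X(t)$ alone — already established in the surrounding text via the martingale/Markov property of $\int_0^t e^{-\Theta(t-s)}L^X\,dW(s)$ — and (ii) controlling the remainder uniformly so that the error is genuinely $O(dt^2)$, which follows from the boundedness of $e^{-\Theta u}$ on the compact interval $u\in[0,T]$ together with a first-order Taylor expansion of the matrix exponential.
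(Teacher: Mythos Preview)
Your proposal is correct and follows essentially the same route as the paper: write the mild solutions of the factor SDE over $[t,t+dt]$ and $[t,T]$, use independence of $\epsilon$ to reduce to $\Cov(X(t+dt),X(T)\mid X(t))P^\top$, apply the It\^o isometry on the overlapping interval $[t,t+dt]$, and Taylor-expand the matrix exponentials to extract the leading $dt$ term. The only cosmetic difference is that the paper keeps $P$ inside the covariance from the start rather than factoring it out at the end.
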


It follows that \eqref{eq:EC_E[dx|Y]} can be expressed as
\begin{eqnarray*}
    \begin{split}
        \mathbb{E}\bigl[X(t+dt) \, | \, X(t), \, Y(0,T) = y \bigr] =   & X(t) + \Theta (\mu - X(t)) dt + L^X \eta(t) \left(y - P \left(e^{-\Theta (T-t)} X(t) + (1 - e^{-\Theta (T-t)}) \mu  \right)\right)dt\\& + O(dt^2).
    \end{split}
\end{eqnarray*}
where
 \begin{equation*}
        \eta(t) = (P e^{-\Theta (T-t)} L^X)^\top \big(P (\Sigma - e^{-\Theta(T-t)} \Sigma e^{-\Theta^\top (T-t)})P^\top + \Omega\big)^{-1}.
    \end{equation*}
Since \(X(t)\) is continuous and has final increments, by taking the limit as \(dt \to 0\), we obtain 
 \begin{equation*}
 \begin{split}
             \mathbb{E}\bigl[X(t+dt) - X(t) \, | \, X(t), \, Y(0,T) = y \bigr] =_{dt \to 0} &\big( - \tilde{\Theta}(t) X(t) + \left (\Theta - L^X \eta(t) P (1-e^{\Theta(T-t)})\right)  \mu + L^X \eta(t) y\big) dt \\ &+ O(dt^2).
 \end{split}
    \end{equation*}
    where 
    \begin{equation*}
        \tilde{\Theta}(t) = \Theta + L^X \eta(t) P e^{-\Theta (T-t)}.
    \end{equation*}
Since \(\Theta\) satisfies Assumption \ref{ass:nondeg}, it follows that it has strictly positive real parts and is invertible. Therefore,
 \begin{equation}
 \label{eq:EC_dX_drift}
 \begin{split}
             \mathbb{E}\bigl[dX(t) \, | \, X(t), \, Y(0,T) = y \bigr] &=_{dt \to 0} \tilde{\Theta}(t) \big(\tilde{\mu}(t,y) - X(t) \big) dt + O(dt^2).
 \end{split}
    \end{equation}
where 
\begin{equation*}
     \tilde{\mu}(t,y) =  \mu + \tilde{\Theta}(t)^{-1}L^X \eta(t) (y - P \mu).
\end{equation*}

    \paragraph{Conditional Volatility}
Let \(dt > 0\), we have
\begin{equation}
\label{eq:EC_V[dx|Y]}
    \begin{split}
        \mathbb{V}\bigl[X(t+dt) \, | \, X(t), \, Y(0,T) = y \bigr] =   & \mathbb{V}\bigl[X(t+dt) \, | \, X(t)\bigr] \\& + \Cov\left(X(t+dt), Y(0,T) \, | \, X(t) \right) \mathbb{V}[Y(0,T) \, | \, X(t)]^{-1} \Cov\left(Y(0,T), X(t+dt) \, | \, X(t) \right).
    \end{split}
\end{equation}
From the SDE \eqref{eq:dX}, we have
\begin{eqnarray*}
    \mathbb{V}\bigl[X(t+dt) \, | \, X(t)\bigr] =_{dt \to 0} dt \cdot\Sigma + O(dt^2).
\end{eqnarray*}
and from Lemma \ref{lemma:EC_covarianceXY}, we have that 
\begin{eqnarray*}
    \Cov\left(X(t+dt), Y(0,T) \, | \, X(t) \right) \mathbb{V}[Y(0,T) \, | \, X(t)]^{-1} \Cov\left(Y(0,T), X(t+dt) \, | \, X(t) \right) =_{dt \to 0} O(dt^2),
\end{eqnarray*}
therefore
\begin{equation*}
        \mathbb{V}\bigl[X(t+dt) \, | \, X(t), \, Y(0,T) = y \bigr] =_{dt \to 0}  dt \cdot \Sigma + O(dt^2).
 \end{equation*}
Additionally, by noting that \(X(t)\) is a constant in the filtration \(\mathcal{F}_t^Y\), we get
\begin{equation}
 \label{eq:EC_dX_volatility}
 \begin{split}
             \mathbb{V}\bigl[dX(t) \, | \, X(t), \, Y(0,T) = y \bigr] &= \lim_{dt \to 0} \mathbb{V}\bigl[X(t+dt) - X(t) \, | \, X(t), \, Y(0,T) = y \bigr] \\&= dt \cdot \Sigma.
 \end{split}
    \end{equation}

    Note that since \(X(t)\) has finite increments, equations \eqref{eq:EC_dX_drift} and \eqref{eq:EC_dX_volatility} are well defined and they characterize the drift and volatility of the conditional process \mbox{$dX(t)\, | \,(X(t) , Y(0,T) = y)$}, next we prove that we can write the process $X(t)|Y(0,T) = y$ as a solution to an SDE, which we explicitly derive. 
\subsubsection{SDE of the conditional risk factors}
We now prove that the conditional process $X^y(t) = (X(t) | Y(0,T) = y)$ is a solution to an SDE with drift and volatility given by \eqref{eq:EC_dX_drift} and \eqref{eq:EC_dX_volatility}, respectively. We first define
\begin{equation*}
    \beta_1(t)
= \int_0^t \left\{-\tilde{\Theta}(s)\,X(s) + \tilde{\mu}\bigl(s, Y(0,T)\bigr)\right\}\,ds,
\quad t \in [0,T].
\end{equation*}
If we can prove that the process
\begin{equation*}
    W^y(t) = L^X dW^\mathbb{Q}(t) = X(t) -  \beta_1(t)
\end{equation*}
is a Brownian motion in the enlarged filtration $\mathcal{F}^Y_t := \sigma (\mathcal{F}_t \vee \sigma(Y(0,T)))$, then we can write
\begin{equation*}
    dW^y(t) = dX^y(t) - d\beta_1(t), \, \, \, \text{for} \, \, t \in [0,T],     
\end{equation*}
and from that we get the SDE representation of $X^y(t)$.

To prove that $W^y(t)$ is a Brownian motion in the filtration $\mathcal{F}_t^y$, we refer to Levy's Characterization of a Brownian motion (see for example \cite{Durret}).
\begin{theorem}[Levy's characterization of a Brownian motion]\label{levys}
    Let the stochastic process $W^y = (W_1^y, \dots, W_N^y)$ be a $N$- dimensional local continuous martingale with $W^y(0) = 0$. Then, the following is equivalent:
    \begin{enumerate}
        \item $W^y$ is a Brownian motion on the underlying filtered probability space with $W^y(t) \sim \mathcal{N}\big(0, t\Sigma^X\big)$.
        \item $W^y$ has quadratic covariations $[W_i^y(t), W_j^y(t)] = \Sigma_{ij}^X t$ for $1 \leq i,j \leq N$.
    \end{enumerate}
\end{theorem}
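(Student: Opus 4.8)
The plan is to prove the stated equivalence by treating the two implications separately, with essentially all the content residing in $(2)\Rightarrow(1)$. The implication $(1)\Rightarrow(2)$ is immediate: if $W^y$ is a Brownian motion with $W^y(t)\sim\mathcal N(0,t\Sigma^X)$, then each component is a continuous martingale and, from the independence and covariance structure of Gaussian increments, the quadratic covariation is $[W_i^y,W_j^y]_t=\Sigma^X_{ij}\,t$. The converse is the genuine Lévy characterization, and I would establish it by computing the conditional characteristic function of the increments via a complex-valued exponential martingale, the standard device behind this theorem. Since $\Sigma^X$ is positive definite under Assumption \ref{ass:nondeg}, one could alternatively reduce to the identity-covariance case through the linear change $\tilde W:=(L^X)^{-1}W^y$, but the direct argument below handles a general $\Sigma^X$ with no extra effort.

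For $(2)\Rightarrow(1)$, fix $\xi\in\mathbb R^N$ and introduce
\begin{equation*}
    M_\xi(t):=\exp\Bigl(i\,\xi^\top W^y(t)+\tfrac12\,\xi^\top\Sigma^X\xi\,t\Bigr).
\end{equation*}
Because $W^y$ is by hypothesis a continuous local martingale with $[W_k^y,W_l^y]_t=\Sigma^X_{kl}\,t$, I would apply the multidimensional It\^o formula to $f(x,t)=\exp\bigl(i\xi^\top x+\tfrac12\xi^\top\Sigma^X\xi\,t\bigr)$. Using $\partial_t f=\tfrac12\xi^\top\Sigma^X\xi\,f$, $\partial_{x_k}f=i\xi_k f$, and $\partial_{x_kx_l}f=-\xi_k\xi_l f$, and substituting $d[W_k^y,W_l^y]=\Sigma^X_{kl}\,dt$, the two finite-variation contributions cancel exactly, leaving the driftless equation
\begin{equation*}
    dM_\xi(t)=i\,M_\xi(t)\,\xi^\top dW^y(t),
\end{equation*}
so $M_\xi$ is a (complex) local martingale. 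With this in hand, the conclusion follows by reading off increments: for $0\le s\le t$, the martingale identity $\mathbb E[M_\xi(t)\mid\mathcal F_s]=M_\xi(s)$ rearranges to
\begin{equation*}
    \mathbb E\Bigl[\exp\bigl(i\,\xi^\top(W^y(t)-W^y(s))\bigr)\,\big|\,\mathcal F_s\Bigr]=\exp\Bigl(-\tfrac12\,\xi^\top\Sigma^X\xi\,(t-s)\Bigr).
\end{equation*}
The right-hand side is deterministic, so the conditional characteristic function of $W^y(t)-W^y(s)$ coincides with that of an $\mathcal N(0,(t-s)\Sigma^X)$ vector and is independent of $\mathcal F_s$; by uniqueness of characteristic functions this shows simultaneously that the increment is independent of $\mathcal F_s$ and has the stated Gaussian law. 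Taking $s=0$ gives $W^y(t)\sim\mathcal N(0,t\Sigma^X)$, and together with path continuity and $W^y(0)=0$ these are precisely the defining properties of a Brownian motion with covariance $\Sigma^X$.

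The main obstacle is the passage from \emph{local} to \emph{true} martingale, since the conditional characteristic function computation requires $\mathbb E[M_\xi(t)\mid\mathcal F_s]=M_\xi(s)$ with genuine expectations. I would resolve this with the localizing sequence $(\tau_n)$ for $M_\xi$: on each stopped interval $M_\xi(\cdot\wedge\tau_n)$ is a martingale, and since $|M_\xi(t)|=\exp(\tfrac12\xi^\top\Sigma^X\xi\,t)$ is uniformly bounded by $\exp(\tfrac12\xi^\top\Sigma^X\xi\,T)$ on $[0,T]$, dominated convergence as $n\to\infty$ transfers the martingale identity to $M_\xi$ itself. A secondary technical point is the use of It\^o's formula for the complex-valued $C^2$ integrand, which is justified by applying the real multidimensional formula to its real and imaginary parts separately; the covariance generalization from the usual $\delta_{kl}\,t$ to $\Sigma^X_{kl}\,t$ requires no new idea, as $\Sigma^X$ enters only as the constant rate of the quadratic covariation.
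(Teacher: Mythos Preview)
Your proof is correct and is the standard textbook argument via the complex exponential martingale. However, the paper does not actually prove this theorem: it is stated as a known result with a citation to \cite{Durret} and then \emph{applied} in the proof of Proposition~\ref{prop:X^y}. There is therefore nothing to compare against; you have supplied a full proof where the paper simply invokes the literature.
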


In our setting, $X(t)$ and $\beta_3(t)$ are both continuous processes, it follows that $W^y$ is also continuous. Now we prove that is it a local martingale in the filtration $\mathcal{F}_t^Y$, that is for $s \leq t$, we need to show that
\begin{equation*}
    \mathbb{E}[W^y(t) - W^y(s) | \mathcal{F}_s^Y] = 0.
\end{equation*}

We have
    \begin{equation*}
        \begin{split}
            \mathbb{E}\bigl[X(t) - X(s) \, | \, \mathcal{F}_s^Y] &= \mathbb{E}\biggl[\int_s^t dX(u) \, | \, \mathcal{F}_s^Y\biggr]\\
            &\stackrel{(a)}{=} \int_s^t \mathbb{E}\bigl[dX(u) \, | \, \mathcal{F}_s^Y \bigr]\\
            &= \int_s^t \mathbb{E}\biggl[\mathbb{E}\bigl[dX(u) \, | \, \mathcal{F}_u^Y \bigr] \, | \, \mathcal{F}_s^Y \biggr]\\
            &\stackrel{(b)}{=} \int_s^t \mathbb{E}\biggl[ \tilde{\Theta}(u)\big(\tilde{\mu}(u,Y(0,T) - X(u) \big) \, | \, \mathcal{F}_s^Y \biggr] du\\
            &\stackrel{(c)}{=} \mathbb{E}\biggl[\int_s^t \tilde{\Theta}(u)\big(\tilde{\mu}(u,Y(0,T) - X(u) \big) du  \,|\, \mathcal{F}_s^Y \biggr]\\
             &= \mathbb{E}\biggl[\beta_1(t) - \beta_1(s)  \, | \, \mathcal{F}_s^Y \biggr],
            \end{split}
    \end{equation*}
where \((a)\) and \((c)\) follow from Fubini's theorem and the fact that \(X\) has finite increments, and \((b)\) follows from \eqref{eq:EC_E[dx|Y]}. Therefore, we get
\begin{equation*}
    \mathbb{E}\bigl[W^y(t) - W^y(s) \,|\, \mathcal{F}_s^Y\bigr] = \mathbb{E}\bigl[X(t) - X(s) - \left(\beta_1(t) - \beta_1(s)\right) \,|\, \mathcal{F}_s^Y\bigr] = 0,
\end{equation*}
so \(W^y = L^X W^\mathbb{Q}\) is a martingale, and by construction \(W^\mathbb{Q}\) is also a martingale.

The quadratic variation of $W^y$ follows directly from \eqref{eq:EC_dX_volatility}, where $[W_i^y(t), W_j^y(t)] = \Sigma_{ij}t$, for $1 \leq i,j \leq N$. Thus, by Theorem \ref{levys}, $W^y$ is a Brownian motion adapted to the filtration $\mathcal{F}_t^Y$. We can now write $X(t)$ in the filtration $\mathcal{F}_t^y$ as
\begin{equation*}
    X(t) = W^y(t) +  \beta_1(t) = L^x W^\mathbb{Q}(t) + \beta_1(t),
\end{equation*}
by conditioning on the event $\{Y(0,T) = y\}$ on both sides of the equation (notice that $W^\mathbb{Q}(t)$ is independent of $Y(0,T)$), we get
\begin{equation*}
   X^y(t) =  X(t) \, |\, y = L^X W^\mathbb{Q}(t) + \beta_1(t),
\end{equation*}
and the SDE is
\begin{equation*}
\begin{split}
        dX^y(t) &= dW^\mathbb{Q}(t) + d \beta_1(t) \,|\, y \\
        &= \tilde{\Theta}(t)\big( \tilde{\mu}(t,y) - X^y(t)\big) dt + L^X dW^\mathbb{Q}(t).
\end{split}
\end{equation*}
This completes the proof. \hfill $\square$

\subsection{Proof of Corollary \ref{corr:S^y}}
From the proof of Proposition \ref{prop:X^y}, observe that we can write the Brownian motion \(W\) in the filtration \(\mathcal{F}_t^Y\) as 
\begin{equation*}
    dW(t) = dW^\mathbb{Q}(t) + \eta(t) \left( y - P \left(e^{-\Theta(T-t)} X^y(t) + (1- e^{-\Theta (T-t)})\right)\mu \right),
\end{equation*}
thus, the price process \eqref{eq:PricesVec} can be written in the filtration \(\mathcal{F}_t^Y\) as 
\begin{equation*}
\begin{split}
        dS^y(t) &= D(S^y(t)) \left(\alpha + \beta X^y(t)\right) dt + L^S \left(dW^\mathbb{Q}(t) + \eta(t) \left( y - P \left(e^{-\Theta(T-t)} X^y(t) + (1- e^{-\Theta (T-t)})\right)\mu \right)\right)\\
        &= D(S^y(t))\big((\tilde{\alpha}(t) + \tilde{\beta}(t) X^y(t) + \tilde{\gamma}(t) y) dt + L^S dW^\mathbb{Q}(t)\big),
\end{split}
\end{equation*}
with
\begin{equation*}
\begin{cases}
    &\tilde{\alpha}(t) = \alpha - L^S \eta(t) P ( 1-e^{-\Theta (T-t)})\mu, \\
    &\tilde{\beta}(t) = \beta -  L^S \eta(t) P e^{-\Theta (T-t)}, \\
    &\tilde{\gamma}(t) = L^S \eta(t).\\
\end{cases}
\end{equation*}
We note that this result can also be proven by following the same steps as in the proof of Proposition \ref{prop:X^y}. \\ This completes the proof. \hfill $\square$

\subsection{Proof of Lemma \ref{lemma:EC_covarianceXY}}
By construction we know that for all \(dt > 0\)
\begin{eqnarray*}
    X(t+dt) = e^{-\Theta dt} X(t) + (1 - e^{-\Theta dt}) \mu + \int_t^{t+dt} e^{-\Theta (t+dt-s) } L^X dW(s) ,
\end{eqnarray*}
and 
\begin{eqnarray*}
\begin{split}
        Y(0,T) &= P X(T) + \epsilon\\
        &= P \left(e^{-\Theta (T-t)} X(t) + (1 - e^{-\Theta (T-t)}) \mu + \int_t^{T} e^{-\Theta (T-s) } L^X dW(s) \right) + \epsilon,
\end{split}
\end{eqnarray*}
where \(\epsilon\) is independent of \(X\). Therefore, given \(X(t)\), we have
\begin{equation*}
    \begin{split}
        \Cov\left(X(t+dt), \; Y(0,T) \; | \; X(t)\right) &=  \Cov\left(\int_t^{t+dt} e^{-\Theta (t+dt-s) } L^X dW(s), \;P \int_t^{T} e^{-\Theta (T-s) } L^X dW(s) \right)\\
        &\stackrel{(a)}{=} \int_t^{t+dt} e^{-\Theta (t+dt-s)} \Sigma^X e^{-\Theta^\top (T-s)}P^\top  ds 
    \end{split}
\end{equation*}
where \((a)\) follows from Itô's Isometry property (see for example \cite{oksendal2003}). 

By Taylor expansion of the integral as \(dt \to 0\), we get
\begin{equation*}
    \begin{split}
        \Cov\left(X(t+dt), \; Y(0,T) \; | \; X(t)\right) & =_{dt \to 0} dt \left(1 + dt + O(dt^2)\right) \Sigma^X e^{-\Theta^\top (T-t)} P^\top\\
        &= \left(P \Sigma^X e^{-\Theta (T-t)}\right)^\top dt + O (dt^2).
    \end{split}
\end{equation*}
This completes the proof. \hfill $\square$

\subsection{Novikov's condition}
\label{App:Novikov}
To define the Radon-Nikodym derivate in \eqref{eq:change_of_measure} we require $k(t)$ defined in \eqref{eq:Radon-Nykodyn} to satisfy Novikov's condition
$$
\mathbb{E}_{\mathbb{P}}\left[\exp\left(\frac{1}{2}\int_0^T \|k(s)\|^2 ds\right)\right] < \infty.
$$
Satisfying this condition ensures that the Radon-Nikodym derivative $\frac{d\mathbb{Q}}{d\mathbb{P}}$ is a true martingale with expectation equal to one (see Lemma 8.6.2 in \cite{oksendal2003}).

From \eqref{eq:Radon-Nykodyn}, the kernel is $k(t) = \eta(t) (y - P \mathbb{E}[X(T) | X(t)])$. We express the conditional expectation of $X(T)$ as an affine function of $X(t)$
\[
\mathbb{E}[X(T) \,|\, \mathcal{F}_t] = (I_d - e^{-\Theta(T-t)})\mu + e^{-\Theta (T-t)} X(t).
\]
Substituting this into the expression for $k(t)$ gives
\begin{align*}
k(t) &= \eta(t) \left(y - P\left((I_d - e^{-\Theta(T-t)})\mu + e^{-\Theta (T-t)} X(t)\right)\right) \\
&= \eta(t) \left(y - P(I_d - e^{-\Theta(T-t)})\mu\right) - \eta(t)Pe^{-\Theta(T-t)}X(t).
\end{align*}
This shows that $k(t)$ is an affine transformation of the Gaussian process $X(t)$. We can write this compactly as $k(t) = m(t) - M(t)X(t)$, where $m(t) := \eta(t) (y - P(I_d - e^{-\Theta(T-t)})\mu)$ and $M(t) := \eta(t)Pe^{-\Theta(T-t)}$ are deterministic functions of time.

Since $X(t)$ is a multi-dimensional Ornstein-Uhlenbeck process, it is a Gaussian process on the finite time horizon $[0, T]$, and its moments are finite. The matrix $\eta(t)$ is given by
\[
\eta(t)=(Pe^{-\Theta(T-t)}L^{X})^{\top}(P(\Sigma-e^{-\Theta(T-t)}\Sigma e^{-\Theta^{\top}(T-t)})P^{\top}+\Omega)^{-1}.
\]
Under Assumption \ref{ass:nondeg}, the covariance matrix $\Omega$ is positive definite. The term $P(\Sigma-e^{-\Theta(T-t)}\Sigma e^{-\Theta^{\top}(T-t)})P^{\top}$ is positive semi-definite, so their sum is positive definite and thus invertible for all $t \in [0, T]$. All other components of $\eta(t)$ are composed of constant matrices and matrix exponentials, which are continuous and bounded on $[0, T]$. Therefore, $\eta(t)$, and consequently $m(t)$ and $M(t)$, are continuous and bounded functions on $[0, T]$. Let $C_m = \sup_{t \in [0,T]} \|m(t)\|$ and $C_M = \sup_{t \in [0,T]} \|M(t)\|$.

We first bound the squared norm of $k(t)$ using the triangle inequality 
\begin{align*}
\|k(t)\|^2 &= \|m(t) - M(t)X(t)\|^2 \\
&\le \left(\|m(t)\| + \|M(t)\|\|X(t)\|\right)^2 \\
&\le \left(C_m + C_M\|X(t)\|\right)^2 \\
&\le 2C_m^2 + 2(C_M\|X(t)\|)^2 = 2C_m^2 + 2C_M^2 \|X(t)\|^2.
\end{align*}
Integrating this inequality from $0$ to $T$ gives a bound on the random variable in the exponent
\[
\int_0^T \|k(s)\|^2 ds \le \int_0^T (2C_m^2 + 2C_M^2 \|X(s)\|^2) ds = 2C_m^2 T + 2C_M^2 \int_0^T \|X(s)\|^2 ds.
\]

Let $Z = \int_0^T \|X(s)\|^2 ds$. Using the bound from the previous step, we have
\begin{align*}
\mathbb{E}_{\mathbb{P}}\left[\exp\left(\frac{1}{2}\int_0^T \|k(s)\|^2 ds\right)\right] &\le \mathbb{E}_{\mathbb{P}}\left[\exp\left(\frac{1}{2}\left(2C_m^2 T + 2C_M^2 Z\right)\right)\right] \\
&= \mathbb{E}_{\mathbb{P}}\left[\exp\left(C_m^2 T + C_M^2 Z\right)\right] \\
&= e^{C_m^2 T} \cdot \mathbb{E}_{\mathbb{P}}\left[\exp\left(C_M^2 Z\right)\right].
\end{align*}
The random variable $Z=\int_0^T\|X(s)\|^2ds$ is a quadratic function of a continuous Gaussian process, by Fernique’s theorem \citep{Fernique1970}, there exists $\alpha>0$ such that
\[
\mathbb E\left[\exp\Big(\alpha\,\sup_{u\in[t,t+\Delta]}\|X(u)\|^2\Big)\right]<\infty
\quad\text{for all $t$ and all sufficiently small $\Delta>0$.}
\]
Fix $\Delta>0$ so that $C_M^2\Delta<\alpha$, and partition $[0,T]$ into subintervals $[t_{j-1},t_j]$ of length at most $\Delta$. Using
\[
\int_{t_{j-1}}^{t_j}\|X(s)\|^2ds\le \Delta\,\sup_{u\in[t_{j-1},t_j]}\|X(u)\|^2,
\]
and the bound above on $\|k\|^2$, we obtain
\[
\mathbb E\left[\exp\Big(\tfrac12\!\int_{t_{j-1}}^{t_j}\|k(s)\|^2ds\Big)\right]
\le \exp(C_m^2\Delta)\,
\mathbb E \left[\exp\Big(C_M^2\Delta\,\sup_{u\in[t_{j-1},t_j]}\|X(u)\|^2\Big)\right]<\infty .
\]
Thus Novikov’s condition holds on each slice, and the corresponding stochastic exponential has unit conditional expectation over every slice. Iterating the tower property across the partition yields that the full density process is a true martingale with $\mathbb{E}_\mathbb{P}\left[\frac{d\mathbb Q}{d\mathbb P}\right]=1$. Hence Novikov’s condition is satisfied.

\newpage
\section{Section \ref{sec4}}
\label{Appendix:Sec4}
\subsection{Proof of Proposition \ref{Prop:MrB_properties}}
\label{Appendix:Proof_MrB_properties}

In this section, we start by a deriving a detailed proof of the first five points of Proposition~\ref{Prop:MrB_properties}. Recall that the mean-reverting bridge is defined as
\[
B(t) = \bigl( X(t) \mid X(T)=y \bigr),
\]
where \(X(t)\) is the mean-reverting process satisfying
\begin{equation}\label{eq:OU_appendix}
\begin{cases}
dX(t) = -\theta X(t)\,dt + dW(t),\\[1mm]
X(0) = a,
\end{cases}
\end{equation}
with the solution
\[
X(t) = e^{-\theta t}a + \int_0^t e^{-\theta (t-s)}\,dW(s).
\]

We first verify the boundary conditions. Since \(X(0)=a\) deterministically, it follows immediately that
\[
B(0) = \bigl( X(0) \mid X(T)=y \bigr) = a.
\]
Moreover, by the definition of the conditional process, the condition \(X(T)=y\) forces
\[
B(T) = \bigl( X(T) \mid X(T)=y \bigr) = y.
\]
Thus, the endpoints are correctly attained.

Next, we show that \(B(t)\) is a Gaussian process. Because \(X(t)\) is Gaussian (as it is the solution of a linear SDE driven by Brownian motion with a deterministic initial condition), the pair \((X(t), X(T))\) is jointly Gaussian. A fundamental property of jointly Gaussian vectors is that the conditional distribution \(X(t)\mid X(T)=y\) is also Gaussian. Hence, the process \(B(t)\) is Gaussian.

We now derive the mean of \(B(t)\). Using the standard formula for the conditional expectation in a jointly Gaussian setting,
\[
\mathbb{E}[X(t)\,|\, X(T)=y] = \mathbb{E}[X(t)] + \frac{\Cov(X(t),X(T))}{\mathbb{V}[X(T)]}\Bigl(y-\mathbb{E}[X(T)]\Bigr),
\]
we note that
\[
\mathbb{E}[X(t)] = e^{-\theta t}a = \lambda(t)a \quad \text{and} \quad \mathbb{E}[X(T)] = \lambda(T)a,
\]
with \(\lambda(t)=e^{-\theta t}\). The covariance between \(X(t)\) and \(X(T)\) is given by
\[
\Cov(X(t),X(T)) = \int_0^t e^{-\theta (t-s)}e^{-\theta (T-s)}\,ds = e^{-\theta(t+T)}\frac{e^{2\theta t}-1}{2\theta},
\]
and the variance of \(X(T)\) is
\[
\mathbb{V}[X(T)] = \int_0^T e^{-2\theta (T-s)}\,ds = \frac{1-e^{-2\theta T}}{2\theta} = \frac{1-\lambda^2(T)}{2\theta}.
\]
Substituting these into the conditional expectation formula, we obtain
\[
\mathbb{E}[B(t)] = \lambda(t)a + \frac{e^{-\theta(t+T)}\bigl(e^{2\theta t}-1\bigr)}{1-\lambda^2(T)}\Bigl(y-\lambda(T)a\Bigr).
\]
A brief calculation shows that
\[
e^{-\theta(t+T)}\bigl(e^{2\theta t}-1\bigr) = \lambda(T-t)-\lambda(T+t),
\]
so that the mean simplifies to
\[
\mathbb{E}[B(t)] = \lambda(t)a + \frac{\lambda(T-t)-\lambda(T+t)}{1-\lambda^2(T)}\Bigl(y-\lambda(T)a\Bigr).
\]

For the covariance structure, note that in a Gaussian framework the conditional covariance of \(X(t)\) and \(X(s)\) given \(X(T)=y\) is
\[
\Cov\bigl(X(t),X(s)\mid X(T)=y\bigr) = \Cov(X(t),X(s)) - \frac{\Cov(X(t),X(T))\,\Cov(X(s),X(T))}{\mathbb{V}[X(T)]}.
\]
After substituting the expressions for \(\Cov(X(t),X(T))\) and \(\mathbb{V}(X(T))\) and simplifying the algebra, one finds that the covariance of \(B(t)\) can be expressed as
\[
\Cov\bigl(B(t),B(s)\bigr)= \frac{1-\lambda^2(T-\min\{t,s\})}{1-\lambda^2(T)}\,\Cov\bigl(X(t),X(s)\bigr).
\]

Finally, we argue that \(B(t)\) has continuous sample paths. The process \(X(t)\) has almost surely continuous paths, as it is the solution to an SDE driven by Brownian motion. Since the conditioning on \(X(T)=y\) only alters the mean and covariance functions in a continuous manner, the process \(B(t)=X(t)\mid X(T)=y\) retains the property of almost sure continuity.

To conclude, note that the SDE representation for \(B(t)\) can be obtained as a special case of Proposition~\ref{prop:X^y}. In the one-dimensional setting, we take \(d = K = 1\) and specialize the parameters by setting \(\mu = 0\), \(L^X = 1\), \(P = 1\), and \(\Omega = 0\). Under these conditions, the dynamics given in Proposition~\ref{prop:X^y} reduce to
\[
dB(t) = \Bigl(\tilde{\theta}(t)\big(\tilde{\mu}(t,y) - \,B(t) \Bigr)dt + dW^\mathbb{Q}(t),
\]
where the modified mean-reversion rate and drift term become
\[
\tilde{\theta}(t) = \theta + \eta(t)e^{-\theta (T-t)},
\]
\[
\tilde{\mu}(t,y) = \frac{y}{\tilde{\theta}(t)},
\]
and
\[
\eta(t) = 2\theta \cdot \frac{e^{-\theta (T-t)}}{1 - e^{-2\theta (T-t)}}.
\]
This completes the result. \hfill \(\square\)

\subsection{Proof of Proposition \ref{prop:1d_MrB}}
\label{Appendix:Proof_1d_MrB}

We start with the one‐dimensional Ornstein–Uhlenbeck (OU) process
\begin{equation}
\label{eq:OU_repeat}
\begin{cases}
dX(t) = -\theta\, X(t)\,dt + dW(t),\\[1mm]
X(0) = a,
\end{cases}
\end{equation}
which has the explicit solution
\[
X(t) = e^{-\theta t}\,a + \int_0^t e^{-\theta(t-s)}\,dW(s).
\]
Thus, for any \(t \ge 0\),
\[
X(t) \sim \mathcal{N}\Bigl(e^{-\theta t}\,a,\, \frac{1-e^{-2\theta t}}{2\theta}\Bigr).
\]

Suppose now that at time \(0\) we observe a noisy version of \(X(T)\) given by
\[
Y(0,T)= X(T) + \epsilon, \quad \epsilon \sim \mathcal{N}\Bigl(0,\,\frac{\omega^2}{2\theta}\Bigr),
\]
with \(\epsilon\) independent of \(X\). Then
\[
Y(0,T) \sim \mathcal{N}\Bigl(e^{-\theta T}a,\, \frac{1-e^{-2\theta T} + \omega^2}{2\theta}\Bigr).
\]
For brevity, denote
\[
\sigma_T^2 = \frac{1-e^{-2\theta T}}{2\theta} \quad \text{and} \quad \tau^2 = \frac{\omega^2}{2\theta}.
\]

Because \(X(T)\) and \(\epsilon\) are independent and jointly Gaussian, the conditional distribution of \(X(T)\) given \(Y(0,T)=y\) is also Gaussian. A standard calculation (see, e.g., Bayesian updating for normal models) shows that
\[
X(T) \mid \{Y(0,T)=y\} \sim \mathcal{N}\Bigl(m_T,\, \tilde{\sigma}^2\Bigr),
\]
with
\[
m_T = \sigma_T^2 \Bigl(\frac{e^{-\theta T}a}{\sigma_T^2} + \frac{y}{\tau^2}\Bigr)
= e^{-\theta T}a + \frac{\sigma_T^2}{\tau^2}\Bigl(y - e^{-\theta T}a\Bigr)
\]
and
\[
\tilde{\sigma}^2 = \left(\frac{1}{\sigma_T^2} + \frac{1}{\tau^2}\right)^{-1} 
= \frac{\sigma_T^2\,\tau^2}{\sigma_T^2+\tau^2}.
\]

Now, observe that the law of \(X(T)\) conditioned on \(Y(0,T)=y\) can be matched to the law of \(X(\tilde{T})\) for an OU process run for a longer time \(\tilde{T} = T + \delta\), provided we choose \(\delta\) so that the variance satisfies
\[
\frac{1-e^{-2\theta \tilde{T}}}{2\theta} = \tilde{\sigma}^2.
\]
A short calculation shows that if we set
\[
\delta = \frac{\ln(1+\omega^2)}{2\theta},
\]
then
\[
e^{-2\theta\delta} = \frac{1}{1+\omega^2},
\]
and one may verify that
\[
\frac{1-e^{-2\theta (T+\delta)}}{2\theta} 
= \frac{1-e^{-2\theta T}e^{-2\theta\delta}}{2\theta}
= \frac{1-e^{-2\theta T}/(1+\omega^2)}{2\theta}
= \frac{\sigma_T^2\,\tau^2}{\sigma_T^2+\tau^2}.
\]
Thus, the conditional variance of \(X(T)\) given \(Y(0,T)=y\) is exactly that of \(X(\tilde{T})\) for the extended time \(\tilde{T}=T+\delta\).

Similarly, the conditional mean \(m_T\) then coincides with the mean of \(X(\tilde{T})\) after rescaling by the factor \(e^{-\theta \delta}\). One may check that
\[
m_T = e^{-\theta T}a + \frac{\sigma_T^2}{\tau^2}\Bigl(y - e^{-\theta T}a\Bigr)
= e^{-\theta (T+\delta)}a + \Bigl(1-e^{-\theta\delta}\Bigr) e^{-\theta T} \cdot \tilde{y},
\]
with
\[
\tilde{y} = e^{-\theta\delta}\,y.
\]
That is, under the observation \(Y(0,T)=y\) the law of \(X(T)\) is identical to that of \(X(\tilde{T})\) with terminal value \(\tilde{y}\).

Since the OU process is Gaussian and its finite-dimensional distributions are fully determined by the mean and covariance, it follows that the law of the process \(\{X(t) \mid Y(0,T)=y\}\) for \(t \in [0,T]\) is the same as the law of a mean-reverting bridge from \(a\) to \(\tilde{y} = e^{-\theta\delta}y\) over the interval \([0,\tilde{T}]\) (restricted to \([0,T]\)), with mean-reversion rate \(\theta\).

This completes the proof. \(\square\)

\subsection{Unique Characteristics of m-MrB}
We now define the properties of the multidimensional Mean-Reverting Bridge (m-MrB), which generalizes the one-dimensional case from Proposition~\ref{prop:1d_MrB}.

\begin{proposition}
\label{prop:m-MrB_Properties}
Let $X(t)\in\mathbb{R}^d$ solve
\[
dX(t)=-\Theta X(t)\,dt+L^X\,dW(t),\qquad X(0)=a,
\]
and let $\Sigma^X=(L^X)(L^X)^\top$ and $V(t):=\mathbb{V}[X(t)]$ denote the variance of $X(t)$. Fix $\tilde T=(\tilde T_1,\ldots, \tilde T_d)^\top\in \mathbb{R}^d$ and write $T_{\min}:=\min_{i\in[d]}\tilde T_i$. Define the stacked future vector $X(\tilde T):=(X_1(\tilde T_1),\ldots,X_d(\tilde T_d))^\top$.

A stochastic process $B(t)\in\mathbb{R}^d$ is an m-MrB from $a$ to $y$ with mean-reversion rate $\Theta$ and hitting times $\tilde T$ if and only if
\begin{enumerate}
    \item $B(0)=a$ and $B_i(\tilde T_i)=y_i$ for $i\in[d]$ (with probability $1$).
    \item $\{B(t),\,t\in[0,T_{\min}]\}$ is a Gaussian process.
    \item For $t\in[0,T]$,
    \[
    \mathbb{E}[B(t)]=e^{-\Theta t}\cdot a \;+\; V(t)\,\Gamma_{\tilde T}(t)\,V(\tilde T)^{-1}\bigl(y-M(\tilde T)\,a\bigr),
    \]
    where
    \begin{equation}\label{ECeq:Coefficients_cov_mMrB}
    M(\tilde T):=\begin{bmatrix}
      e_1^\top e^{-\Theta \tilde T_1}\\[-2pt]
      \vdots\\[-2pt]
      e_d^\top e^{-\Theta \tilde T_d}
    \end{bmatrix}\in\mathbb{R}^{d\times d},
    \qquad
    \Gamma_{\tilde T}(t):=\begin{bmatrix}
      e^{-\Theta^\top(\tilde T_1-t)}e_1 & \cdots & e^{-\Theta^\top(\tilde T_d-t)}e_d
    \end{bmatrix}\in\mathbb{R}^{d\times d},
    \end{equation}
    with $e_i$ the $i$-th canonical basis vector in $\mathbb{R}^d$.
    \item For $0\le s\le t\le T_{\min}$,
    \[
    \Cov\big(B(t),B(s)\big)
      = \Cov\big(X(t),X(s)\big)\;-\;V(t)\,\Gamma_{\tilde T}(t)\,V(\tilde T)^{-1}\,\Gamma_{\tilde T}(s)^\top\,V(s).
    \]
    \item With probability $1$, $t\to B_i(t)$ is continuous on $[0,T_{\min}]$ for each $i\in[d]$.
\end{enumerate}
Here $V(t)=\mathbb{V}[X_1(t),\ldots,X_d(t)]$ is given by the OU covariance
\[
V(t)=\int_0^t e^{-\Theta (t-s)}\Sigma^X\,e^{-\Theta^\top (t-s)}\,ds
\;=\;\Sigma - e^{-\Theta t}\,\Sigma\,e^{-\Theta^\top t},
\]
and $V(\tilde T)=\mathbb{V}[X_1(\tilde T_1),\ldots,X_d(\tilde T_d)]$ has entries
\[
V(\tilde T)_{ij}
=\Bigl[e^{-\Theta(\tilde T_i-t_{i,j}^{\min})}\,\Sigma\,e^{-\Theta^\top(\tilde T_j-t_{i,j}^{\min})}
      - e^{-\Theta \tilde T_i}\,\Sigma\,e^{-\Theta^\top \tilde T_j}\Bigr]_{ij},
\qquad
t_{i,j}^{\min}:=\min(\tilde T_i,\tilde T_j),
\]
with $\Sigma$ the long-run covariance matrix of $X(t)$.
\end{proposition}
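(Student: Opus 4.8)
The plan is to prove this as the multidimensional analogue of Proposition~\ref{Prop:MrB_properties}, viewing the m-MrB as a Gaussian vector obtained by conditioning the linear SDE \eqref{eq:multi_OU} on a finite family of its coordinates evaluated at different times. Write $X(\tilde T):=(X_1(\tilde T_1),\ldots,X_d(\tilde T_d))^\top\in\mathbb{R}^d$ for the stacked future vector, so that by Definition~\ref{def:m-MrB} the process in question is $B(t)=(X(t)\mid X(\tilde T)=y)$. Since $X(t)=e^{-\Theta t}a+\int_0^t e^{-\Theta(t-s)}L^X\,dW(s)$ is a Gaussian process, the vector $\bigl(X(t_1),\ldots,X(t_n),X(\tilde T)\bigr)$ is jointly Gaussian for any finite set of times, hence the conditional law of $\bigl(X(t_1),\ldots,X(t_n)\bigr)$ given $X(\tilde T)=y$ is Gaussian with mean and covariance supplied by the standard Gaussian conditioning formulas. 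This settles property~2 and reduces the ``only if'' direction to computing three objects on $[0,T_{\min}]$: $\mathbb E[X(\tilde T)]$, $\Cov(X(t),X(\tilde T))$, and $\mathbb V[X(\tilde T)]$. For the ``if'' direction I will use that a continuous Gaussian process on $[0,T_{\min}]$ is determined by its mean and covariance functions, so a process satisfying properties 1--5 has the same finite-dimensional distributions, path regularity, and boundary behaviour as $B$, and hence the same law.

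For the core computation, first $\mathbb E[X_i(\tilde T_i)]=e_i^\top e^{-\Theta\tilde T_i}a$, giving $\mathbb E[X(\tilde T)]=M(\tilde T)a$ with $M(\tilde T)$ as in \eqref{ECeq:Coefficients_cov_mMrB}. Second, by It\^o's isometry, for $t\le T_{\min}\le\tilde T_j$ the centered parts satisfy
\[
\Cov\bigl(X(t),X_j(\tilde T_j)\bigr)=\Bigl(\int_0^t e^{-\Theta(t-s)}\Sigma^X e^{-\Theta^\top(t-s)}\,ds\Bigr)e^{-\Theta^\top(\tilde T_j-t)}e_j=V(t)\,e^{-\Theta^\top(\tilde T_j-t)}e_j,
\]
where the restriction $t\le T_{\min}$ is precisely what makes every integral run all the way up to $t$; stacking over $j$ gives $\Cov(X(t),X(\tilde T))=V(t)\Gamma_{\tilde T}(t)$. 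Third, for $\mathbb V[X(\tilde T)]$ I will use the Lyapunov identity $\Theta\Sigma+\Sigma\Theta^\top=\Sigma^X$ to recognize $e^{-\Theta(\tilde T_i-s)}\Sigma^X e^{-\Theta^\top(\tilde T_j-s)}$ as the $s$-derivative of $e^{-\Theta(\tilde T_i-s)}\Sigma e^{-\Theta^\top(\tilde T_j-s)}$, so that the integral defining $\Cov(X_i(\tilde T_i),X_j(\tilde T_j))$ over $[0,t^{\min}_{ij}]$, with $t^{\min}_{ij}=\min(\tilde T_i,\tilde T_j)$, telescopes to $e^{-\Theta(\tilde T_i-t^{\min}_{ij})}\Sigma e^{-\Theta^\top(\tilde T_j-t^{\min}_{ij})}-e^{-\Theta\tilde T_i}\Sigma e^{-\Theta^\top\tilde T_j}$, which is the stated entry of $V(\tilde T)$; the same telescoping with $\tilde T_i=\tilde T_j=t$ gives $V(t)=\Sigma-e^{-\Theta t}\Sigma e^{-\Theta^\top t}$. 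Substituting these into $\mathbb E[B(t)]=\mathbb E[X(t)]+\Cov(X(t),X(\tilde T))\,\mathbb V[X(\tilde T)]^{-1}\bigl(y-\mathbb E[X(\tilde T)]\bigr)$ and the analogous identity for $\Cov\bigl(X(t),X(s)\mid X(\tilde T)\bigr)$ (using $\Cov(X(\tilde T),X(s))=\Gamma_{\tilde T}(s)^\top V(s)$ by symmetry of $V(s)$) reproduces properties 3 and 4 verbatim. Property~1 is immediate since $X(0)=a$ is deterministic and conditioning on $\{X_i(\tilde T_i)=y_i\}$ forces $B_i(\tilde T_i)=y_i$ a.s.; property~5 follows because $X$ has a.s.\ continuous paths and, on $[0,T_{\min}]$, conditioning merely replaces $X$ by an affine-Gaussian transform whose mean and residual vary continuously in $t$ (beyond $T_{\min}$ some coordinates are already pinned, which is exactly why properties 2--4 are stated only on $[0,T_{\min}]$).

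The one step I expect to require genuine care is justifying that the conditioning is well posed, i.e.\ that $\mathbb V[X(\tilde T)]\succ 0$ so that $\mathbb V[X(\tilde T)]^{-1}$ in properties 3--4 is meaningful. I would argue directly: for $c\neq 0$, $c^\top X(\tilde T)$ equals a deterministic term plus the stochastic integral $\int_0^{T_{\max}}g(s)^\top dW(s)$ with $g(s)=(L^X)^\top\sum_{i:\tilde T_i\ge s}c_i\,e^{-\Theta^\top(\tilde T_i-s)}e_i$, whose variance is $\int_0^{T_{\max}}\|g(s)\|^2\,ds$. Order the distinct hitting times $\tau_1<\cdots<\tau_m$; since $(L^X)^\top$ has trivial kernel (Assumption~\ref{ass:nondeg}) and $e^{-\Theta^\top(\cdot)}$ is invertible, requiring $g\equiv 0$ on the top slice $(\tau_{m-1},\tau_m)$ forces $\sum_{i:\tilde T_i=\tau_m}c_i e_i=0$, hence $c_i=0$ for all $i$ with $\tilde T_i=\tau_m$; peeling downward through the slices $(\tau_{k-1},\tau_k)$ inductively forces $c=0$, a contradiction, so $c^\top\mathbb V[X(\tilde T)]c>0$. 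Once $\mathbb V[X(\tilde T)]$ is invertible, the remainder is matrix-exponential calculus together with the Lyapunov telescoping identity already exploited in the one-dimensional proof.
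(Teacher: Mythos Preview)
Your proposal is correct and follows essentially the same route as the paper: both proofs identify the m-MrB as the Gaussian conditional law of $X(t)$ given the stacked future vector $X(\tilde T)$, compute $\mathbb{E}[X(\tilde T)]=M(\tilde T)a$, $\Cov(X(t),X(\tilde T))=V(t)\Gamma_{\tilde T}(t)$, and $V(\tilde T)$ via the Lyapunov telescoping identity, and then read off properties 1--5 from the standard Gaussian conditioning formulas together with continuity of $X$. Your treatment is in fact more careful than the paper's in one respect: the paper simply writes $V(\tilde T)^{-1}$ without comment, whereas you give a clean slice-by-slice argument that $\mathbb{V}[X(\tilde T)]\succ 0$ under Assumption~\ref{ass:nondeg}.
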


\medskip

\begin{proof}
Since $X(t)$ is Gaussian with mean $\mathbb{E}[X(t)]=e^{-\Theta t}a$ and covariance
\[
V(t)=\int_0^t e^{-\Theta (t-s)}\Sigma\,e^{-\Theta^\top (t-s)}\,ds,
\]
the pair $\big(X(t),\,X(\tilde T)\big)$ is jointly Gaussian for each $t\in[0,T]$. For $t\le T$ the cross-covariance is
\[
\mathrm{cov}\big(X(t),X(\tilde T)\big)
=\begin{bmatrix}\mathrm{cov}\big(X(t),X_1(\tilde T_1)\big)&\cdots&\mathrm{cov}\big(X(t),X_d(\tilde T_d)\big)\end{bmatrix}
=V(t)\,\Gamma_{\tilde T}(t),
\]
and $V(\tilde T)=\mathbb{V}[X(\tilde T)]$ has entries
\[
V(\tilde T)_{ij}
=\int_0^{t_{i,j}^{\min}} e^{-\Theta(\tilde T_i-s)}\Sigma\,e^{-\Theta^\top(\tilde T_j-s)}\,ds
=\Bigl[e^{-\Theta(\tilde T_i-t_{i,j}^{\min})}\Sigma e^{-\Theta^\top(\tilde T_j-t_{i,j}^{\min})}-e^{-\Theta\tilde T_i}\Sigma e^{-\Theta^\top\tilde T_j}\Bigr]_{ij}.
\]
Applying the standard Gaussian conditioning formulas yields
\[
\mathbb{E}[B(t)]=e^{-\Theta t}a+\mathrm{cov}\big(X(t),X(\tilde T)\big)V(\tilde T)^{-1}\bigl(y-\mathbb{E}[X(\tilde T)]\bigr),
\]
with $\mathbb{E}[X(\tilde T)]=M(\tilde T)a$, which is the stated expression in item~(3). The conditional covariance is
\[
\mathrm{cov}\big(B(t),B(s)\big)
= \mathrm{cov}\big(X(t),X(s)\big)
  - \mathrm{cov}\big(X(t),X(\tilde T)\big)\,V(\tilde T)^{-1}\,\mathrm{cov}\big(X(\tilde T),X(s)\big),
\]
which reduces to item~(4) using $\mathrm{cov}\big(X(\tilde T),X(s)\big)=\Gamma_{\tilde T}(s)^\top V(s)$. Items~(1), (2), and (5) follow from the definition of conditioning on $X(\tilde T)=y$ and continuity of $X$. Uniqueness follows from uniqueness of the conditional Gaussian law. 
\end{proof}

\subsection{Proof of Proposition \ref{prop:precision_gain}}
\label{Appendix:secCPGain}
We prove that the precision gained on $X(T)$ from observing $X(\tilde{T})$ is the precision-gain operator $\mathcal{P}(\delta; \Theta, \Sigma^X)$ given by \eqref{eq:precision_gain2}. Let $X_T \equiv X(T)$ and $X_{\tilde{T}} \equiv X(\tilde{T})$. The precision gain is $\mathbb{V}[X_T|X_{\tilde{T}}]^{-1} - \mathbb{V}[X_T]^{-1}$. From the Woodbury identity, this can be written as:
\[
\mathbb{V}[X_T|X_{\tilde{T}}]^{-1} - \mathbb{V}[X_T]^{-1} = \mathbb{V}[X_T]^{-1}\mathrm{cov}(X_T,X_{\tilde{T}}) \left(\mathbb{V}[X_{\tilde{T}}|X_T]\right)^{-1} \mathrm{cov}(X_{\tilde{T}},X_T)\mathbb{V}[X_T]^{-1}.
\]
We now identify the terms. Let \(
    C(\delta) := \mathbb{V}[X(\delta) \, | \, X(0)] \in \mathbb{R}^{d \times d}\) is the covariance of \(X\) at time \(\delta\) with elements  \[C(\delta)_{ij}
\;=\;\int_0^{\min\{\delta_i,\delta_j\}}
e_i^\top e^{-\Theta(\delta_i-u)}\,\Sigma^X\,e^{-\Theta^\top(\delta_j-u)}e_j\,du,
\]
and  \(F(\delta) \in \mathbb{R}^{d \times d}\) with \(i^{th}\) column \((F(\delta))_i = e^{-\Theta \delta_i}e_i\)
  where $e_i$ the $i^{th}$ canonical basis vector in $\mathbb{R}^d$.
  
  By the Markov property of the Ornstein-Uhlenbeck process, the conditional variance $\mathbb{V}[X_{\tilde{T}}|X_T]$ is equal to $C(\delta)$
\[
\mathbb{V}[X_{\tilde{T}}|X_T] = \mathbb{V}[X(T\mathbf{1}_d+\delta) \,|\, X(T)] = \mathbb{V}[X(\delta) \,|\, X(0)] = C(\delta).
\]
Next, the cross-covariance term $\mathrm{cov}(X_{\tilde{T}},X_T)$ is derived from the conditional expectation $\mathbb{E}[X_{\tilde{T}}|X_T] = F(\delta)^\top X_T$ from Proposition \ref{prop:m-MrB_Properties}. This gives
\[
\mathrm{cov}(X_{\tilde{T}},X_T) = \mathrm{cov}(\mathbb{E}[X_{\tilde{T}}|X_T], X_T) = \mathrm{cov}(F(\delta)^\top X_T, X_T) = F(\delta)^\top \mathbb{V}[X_T].
\]
Substituting these into the precision gain equation gives
\begin{align*}
\mathcal{P}(\delta; \Theta, \Sigma^X)&= \mathbb{V}[X_T|X_{\tilde{T}}]^{-1} - \mathbb{V}[X_T]^{-1}\\ &= \mathbb{V}[X_T]^{-1} \mathrm{cov}(X_T,X_{\tilde{T}}) \ C(\delta)^{-1} \ \mathrm{cov}(X_{\tilde{T}},X_T) \mathbb{V}[X_T]^{-1} \\
&= \mathbb{V}[X_T]^{-1} (\mathbb{V}[X_T] F(\delta)) \ C(\delta)^{-1} \ (F(\delta)^\top \mathbb{V}[X_T]) \mathbb{V}[X_T]^{-1} \\
&= (\mathbb{V}[X_T]^{-1} \mathbb{V}[X_T]) F(\delta) \ C(\delta)^{-1} \ F(\delta)^\top (\mathbb{V}[X_T] \mathbb{V}[X_T]^{-1}) \\
&= F(\delta) C(\delta)^{-1} F(\delta)^\top.
\end{align*}

\subsection{Proof of Theorem \ref{thm:m-MrB}}
\label{Appendix:secC4}
Let $B(t) := (X(t) \, | \, Y(0,T)=y)$,  $\, t \in [0,T]$ where \(X(t)\) is given by \eqref{eq:multi_OU} and \(Y(0,T)\) by \eqref{eq:Y_view}. The process $X(t)$ and view $Y(0,T)$ are jointly Gaussian, so the conditional process $B(t)$ is also Gaussian. Its law is uniquely determined by its mean $\mathbb{E}[B(t)]$ and covariance $\mathrm{cov}(B(t), B(s))$ functions.

\paragraph{\textbf{(i) Necessity.}}
Assume that $B(t)$ is an m-MrB with a time-extension vector $\delta \in [0, \infty)^d$ and hitting times $\tilde{T} = T\, \mathbf{1}_d + \delta$. By definition, the law of $B(t)$ must be identical to the law of the corresponding m-MrB process. Consequently, their covariance functions must be identical for all $s,t \in [0,T]$.

The identity of the covariance functions implies an identity in the information structure. Specifically, the precision gained on the factor vector $X(T)$ by conditioning on the available information must be the same for both processes. For the process $B(t)$, the information comes from the view $Y(0,T)$, it can be shown that the resulting precision gain on $X(T)$ is\footnote{See Subsection \ref{EC:Proof_PrecisionMatrices} for the detailed derivations of the precision gain matrix.} 
\[
\mathbb{V}[X(T) \, | \, Y(0,T)=y]^{-1} - \mathbb{V}[X(T)]^{-1} = P^\top \Omega^{-1} P.
\]
For the m-MrB, the process is defined by conditioning on the exact future vector $X(\tilde{T})$. The precision gained on $X(T)$ from this future observation is, by definition, the precision-gain operator
\[
\mathbb{V}[X(T) \, | \, X(\tilde{T})]^{-1} - \mathbb{V}[X(T)]^{-1} = \mathcal{P}(\delta; \Theta, \Sigma^X).
\]
where \(\mathcal{P}(\delta; \Theta, \Sigma^X)\) is given by \eqref{eq:precision_gain2}. 
Since the laws of the processes are assumed to be identical, their informational structures must coincide. Therefore, the alignment condition must hold:
\[
P^\top \Omega^{-1} P = \mathcal{P}(\delta; \Theta, \Sigma^X).
\]
This proves the condition is necessary.

\paragraph{\textbf{(ii) Sufficiency.}}
Conversely, assume there exists a vector $\delta \in [0, \infty)^d$ such that the alignment condition in \eqref{eq:alignment_condition} holds. Our goal is to show that the process $B(t)$ is an m-MrB by demonstrating that its law coincides with that of  $(X(t) \, | \, X(\tilde{T}) = \tilde{y})$, for a hitting times $\tilde{T} = T\, \mathbf{1}_d + \delta$ and a terminal point $\tilde{y}$.

Since both $B(t)$ and the conditioned process $X(t)$ are Gaussian, it suffices to show that their mean and covariance functions are identical. We will first establish the equality of the covariance functions and then determine the terminal point $\tilde{y}$ that aligns their mean functions.

\textbf{1. Covariance Equivalence}

First, we characterize the covariance of the target m-MrB. Let $\tilde{T} = T\mathbf{1}_d + \delta$. From Proposition \ref{prop:m-MrB_Properties}, the covariance of the m-MrB, conditioned on its value at $\tilde{T}$, is given by
$$
\Cov(X(t), X(s) \, | \, X(\tilde{T})) = \Cov(X(t), X(s)) - V(t) \Gamma_{\tilde{T}}(t) V(\tilde{T})^{-1} \Gamma_{\tilde{T}}(s)^\top V(s).
$$
Let $F(\delta) \in \mathbb{R}^{d \times d}$ be the matrix with the $i$-th column given by $(F(\delta))_i = e^{-\Theta \delta_i} e_i$. It follows from \eqref{ECeq:Coefficients_cov_mMrB} that $\Gamma_{\tilde{T}}(t) = e^{-\Theta^\top (T-t)} F(\delta)^\top$. The terminal variance matrix $V(\tilde{T})$ is given by the Markov property as
\begin{equation}
\label{ECeq:V_tildeT_revised}
    V(\tilde{T}) = V(T\mathbf{1}_d + \delta) = F(\delta) V(T) F(\delta)^{\top} + C(\delta),
\end{equation}
where $C(\delta) := \mathbb{V}[X(\delta) \, | \, X(0)]$ has elements
$$
C(\delta)_{ij} = \int_0^{\min\{\delta_i,\delta_j\}} e_i^\top e^{-\Theta(\delta_i-u)}\,\Sigma^X\,e^{-\Theta^\top(\delta_j-u)}e_j\,du.
$$
Substituting these expressions and simplifying yields the m-MrB covariance
\begin{equation}
\label{ECeq:Cov_m-MrB_simplified_revised}
    \Cov(X(t), X(s) \, | \, X(\tilde{T})) = \Cov(X(t), X(s)) - V(t) e^{-\Theta^\top(T-t)} \left(V(T)^{-1} + \mathcal{P}(\delta; \Theta, \Sigma^X) \right)^{-1} e^{-\Theta(T-s)} V(s),
\end{equation}
where $\mathcal{P}(\delta; \Sigma^X, \Theta) = F(\delta)^\top C(\delta)^{-1} F(\delta)$.

Next, we consider the process $B(t)$. By definition, its covariance is
\begin{equation}
\label{ECeq:Cov_B_m-MrB_simplified_revised}
\begin{split}
    \Cov(B(t), B(s)) &= \Cov(X(t), X(s) \, | \, Y(0,T) = y) \\
    &= \Cov(X(t), X(s)) - \Cov(X(t), Y(0,T)) \mathbb{V}[Y(0,T)]^{-1} \Cov(Y(0,T), X(s)) \\
    &= \Cov(X(t), X(s)) - \Cov(X(t), X(T)) P^\top \mathbb{V}[Y(0,T)]^{-1} P \Cov(X(T), X(s)) \\
    &\stackrel{(a)}{=} \Cov(X(t), X(s)) - V(t) e^{-\Theta^\top(T-t)} \left(V(T)^{-1} + P^\top\Omega^{-1}P \right)^{-1} e^{-\Theta(T-s)} V(s),
\end{split}
\end{equation}
where step (a) uses the expression for the precision gain matrix from Subsection \ref{EC:Proof_PrecisionMatrices}.

By our initial assumption, the alignment condition \eqref{eq:alignment_condition} holds, which states $P^\top \Omega^{-1}P = \mathcal{P}(\delta; \Sigma^X, \Theta)$. Comparing \eqref{ECeq:Cov_m-MrB_simplified_revised} and \eqref{ECeq:Cov_B_m-MrB_simplified_revised}, we see that the covariance functions are identical. This implies that the centered processes, $B(t) - \mathbb{E}[B(t)]$ and $(X(t) \, | \, X(\tilde{T})) - \mathbb{E}[X(t) \, | \, X(\tilde{T})]$, are equal in law. Since $B(t) - \mathbb{E}[B(t)]$  has zero mean, it follows by Proposition \ref{prop:m-MrB_Properties} that it is a m-MrB from \(\mathbf{0}\) to \(\mathbf{0}\) with hitting times \(\tilde{T}\).

\textbf{2. Mean Equivalence}

It remains to show that when \(P\) has full rank, there exist a unique terminal point $\tilde{y}$ such that the mean functions also coincide. From Proposition \ref{prop:m-MrB_Properties}, the mean of the m-MrB is
\begin{equation}
\label{ECeq:Expectation_m-MrB_simplified_revised}
    \mathbb{E}[X(t) \, | \, X(\tilde{T}) = \tilde y] = e^{-\Theta t}a + V(t) e^{-\Theta^\top (T-t)} \left(V(T)^{-1} + \mathcal{P}(\delta; \Sigma^X, \Theta) \right)^{-1} F(\delta)^\top C(\delta)^{-1} \left(\tilde y - F(\delta) \mathbb{E}[X(T)]\right).
\end{equation}
From Proposition \ref{prop:X^y}, it can be shown that the mean of $B(t)$ is
\begin{equation}
\label{ECeq:Expectation_B_m-MrB_simplified_revised}
    \mathbb{E}[B(t)] =  e^{-\Theta t}a + V(t) e^{-\Theta^\top (T-t)} \left(V(T)^{-1} + P^\top \Omega^{-1} P \right)^{-1} P^\top \Omega^{-1} \left(y - P\mathbb{E}[X(T)]\right).
\end{equation}
 The two expressions for the mean become identical if we choose $\tilde{y}$ such that
$$
F(\delta)^\top C(\delta)^{-1} \left(\tilde y - F(\delta) \mathbb{E}[X(T)]\right) = P^\top \Omega^{-1} \left(y - P\mathbb{E}[X(T)]\right).
$$
Given the alignment condition, we get
\begin{equation}
\label{ECeq:equalmeans_mMrB}
F(\delta)^\top C(\delta)^{-1} \tilde{y} = P^\top \Omega^{-1} y.
\end{equation}
When \(P\) has full rank and \(\Omega\ succ 0\), \(P \Omega^{-1} P \succ 0\). It follows from the alignment condition \eqref{eq:alignment_condition} that \(F(\delta)^{-1} C(\delta)^{-1} F(\delta) \succ 0 \). Therefore, \(F(\delta)\) is also invertible and there exists a unique target vector \(\tilde y\) that solves \eqref{ECeq:equalmeans_mMrB}, with
$$
\tilde{y} =  C(\delta)F(\delta)^{-\top}P^\top \Omega^{-1} y.
$$
With this choice of $\tilde{y}$, the mean functions of the two processes are identical.

Therefore, since both $B(t)$ and $(X(t) \, | \, X(\tilde{T}) = \tilde{y})$ are Gaussian processes, and we have shown that their covariance and mean functions are identical for the specified $\tilde{y}$ and $\tilde{T}$, the processes coincide in law:
$$
B(t) \stackrel{d}{=} (X(t) \, | \, X_1(T + \delta_1) = \tilde{y}_1, \ldots, X_d(T + \delta_d) = \tilde y_d).
$$
Therefore, $B(t)$ is an m-MrB from $a$ to $\tilde{y}$ with hitting times $\tilde{T}$, which completes the sufficiency argument.
\begin{remark}
    When $P$ does not have full rank, the matrix $P^\top \Omega^{-1}P$ is singular. While this may prevent the existence of a unique terminal point $\tilde{y}$ that aligns the mean functions for every given $y$, the alignment of the covariance structures in \eqref{ECeq:Cov_m-MrB_simplified_revised} and \eqref{ECeq:Cov_B_m-MrB_simplified_revised} is unaffected. Consequently, the centered process $B(t) - \mathbb{E}[B(t)]$ is still a zero-mean m-MrB (from $\mathbf{0}$ to $\mathbf{0}$) with hitting times $\tilde{T} = T\mathbf{1}_d + \delta$.
\end{remark}

\subsubsection{Proof of the Information Gain Matrix from Noisy Views}
\label{EC:Proof_PrecisionMatrices}
We prove that the precision gained on $X(T)$ from observing $Y(0,T)=y$ is given by $P^\top \Omega^{-1} P$. The precision gain is defined as $\mathbb{V}[X(T) \, | \, Y(0,T)]^{-1} - \mathbb{V}[X(T)]^{-1}$.

Let $X \equiv X(T)$ and $Y \equiv Y(0,T) = PX + \epsilon$. The vectors $X$ and $Y$ are jointly Gaussian. The conditional covariance matrix is given by
\begin{equation}
\label{eq:gauss_var}
\mathbb{V}[X|Y] = \mathbb{V}[X] - \mathrm{cov}(X,Y)\mathbb{V}[Y]^{-1}\mathrm{cov}(Y,X).
\end{equation}
Let $V_X := \mathbb{V}[X(T)]$. The covariance and variance terms are
\begin{align*}
\mathrm{cov}(X,Y) &= \mathrm{cov}(X, PX + \epsilon) = \mathrm{cov}(X,PX) + \mathrm{cov}(X,\epsilon) = V_X P^\top \\
\mathbb{V}[Y] &= \mathbb{V}(PX + \epsilon) = \mathbb{V}(PX) + \mathbb{V}(\epsilon) = P V_X P^\top + \Omega
\end{align*}
by substituting into \eqref{eq:gauss_var}, we get
\[
\mathbb{V}[X|Y] = V_X - (V_X P^\top)(P V_X P^\top + \Omega)^{-1}(P V_X).
\]
To find the inverse of this expression, we use the Woodbury matrix identity: $(A-BD^{-1}C)^{-1} = A^{-1} + A^{-1}B(D - CA^{-1}B)^{-1}CA^{-1}$. Let $A=V_X$, $B=V_X P^\top$, $C=P V_X$, and $D = P V_X P^\top + \Omega$. The inner term of the identity simplifies to
\[
D - C A^{-1} B = (P V_X P^\top + \Omega) - (P V_X)(V_X^{-1})(V_X P^\top) = \Omega.
\]
Applying the identity yields the conditional precision
\begin{align*}
\mathbb{V}[X|Y]^{-1} &= V_X^{-1} + V_X^{-1}(V_X P^\top)(\Omega)^{-1}(P V_X)V_X^{-1} \\
&= V_X^{-1} + P^\top \Omega^{-1} P.
\end{align*}
Rearranging gives the precision gain
\[
\mathbb{V}[X|Y]^{-1} - \mathbb{V}[X]^{-1} = P^\top \Omega^{-1} P.
\]

\newpage

\section{Section \ref{sec5}}

\subsection{Proof of the Value Function Characterization in \ref{sec51}}
\label{App:ProofHJB}

Given observed risk factors $x \in \mathbb{R}^d$, views vector \(y \in \mathbb{R}^K\), and  a level of wealth $z \in \mathbb{R}$ at time $t$, the investor's value function is
\begin{equation*}
    V(t,z,x) = \max_{\pi \in \mathcal{A}} \mathbb{E}_\mathbb{Q}\big[ U(Z(T)) \,|\, X^y(t) = x, \, Y(0,T) = y \big],
\end{equation*}
where \(\mathbb Q = \mathbb P( . \, | \, Y(0,T)  = y)\) is the posterior probability measure,
\begin{equation*}
    U(Z(T)) = \frac{Z(T)^{1 - \gamma}}{1 - \gamma}
\end{equation*}
is the investor's utility at the end of the investment horizon $T$, and $\gamma$ is her risk aversion ($\gamma \geq 0 $ and $\gamma \neq 1$). By Itô's Lemma, the dynamics of the value function are
\begin{equation}
\label{eq:EC_Itos_vf}
    \begin{split}
        dV(t,Z(t), X^y(t)) =& \frac{\partial V}{\partial t} dt + \nabla_z V dZ(t) + \frac{1}{2} \nabla_z^2 V (dZ(t))^2 + (\nabla_x V)^\top dX^y(t) +  \frac{1}{2} (dX^y(t))^\top (\nabla_x^2 V )dX^y(t)\\
        & + (dX^y(t))^\top \nabla_{z,x}^2 V dZ(t).
     \end{split}
\end{equation}
Under the assumption that the market is self-financing, the wealth process satisfies
\begin{equation*}
    dZ(t) = Z(t) \bigg( r_f dt + \pi(t)^\top \big(\alpha + \beta X^y(t) - r_f \mathbf{1}_N\big) dt + \pi(t)^\top L^S dW^\mathbb{Q}(t) \bigg),
\end{equation*}
it follows that
\begin{equation*}
    (dZ(t))^2 = \pi^\top(t) \Sigma^S \pi(t) Z^2(t) dt.
\end{equation*}
The conditional risk factors dynamics are given by \eqref{eq:dX^y} and can be  expressed as
\begin{equation*}
                dX^y(t) =\tilde{\Theta}(t)(\tilde{\mu}(t,y) - X^y(t)) dt + L^X dW^\mathbb{Q}(t).
\end{equation*}
Therefore, 
\begin{equation*}
    \begin{split}
        dX^y(t) (dX^y(t))^\top &= \Sigma^X dt,\\
        dZ(t)\, dX^y(t) &= Z(t) L^X (L^S)^\top \pi(t).
    \end{split}
\end{equation*}
Thus, by substituting the above dynamics in \eqref{eq:EC_Itos_vf}, we express the HJB as
\begin{equation}
\label{eq:EC_HJB}
\begin{split}
    \max_{\pi} \Big\{&\frac{\partial V}{\partial t} +  z \big( r_f + \pi(t)^\top(\alpha + \beta x - r_f \mathbf{1}_N \big) \nabla_z V
  +  \tilde{\Theta}(t)(\tilde{\mu}(t,y) - x)^\top \nabla_x V    + \frac{1}{2} z^2 \pi(t)^\top \Sigma^S \pi(t)  \nabla^2_z V \\ & + \frac{1}{2} \Tr(\Sigma^X \nabla_x^2 V)  + z \pi(t)^\top \Sigma^{S,X} \nabla^2_{x,z} V\Big\} = 0
\end{split}
\end{equation}
The HJB is concave in $\pi(t)$ for each $t \in [0,T]$, therefore, the optimal investment strategy $\pi^*(t)$ can be directly derived by taking the first order derivative with respect to $\pi$
\begin{equation*}
    (\mu^S(t,x,y) - r_f \mathbf{1}_N) \nabla_z V + z^2 \pi^*(t)\Sigma^S \nabla_z^2 V + z \Sigma^{S,X} \nabla_{x,z}^2 V = 0,
\end{equation*}
the optimal solution is then
\begin{equation}
\label{eq:EC_pi_ori}
\begin{split}
        \pi^*(t) &= - \frac{1}{z^2 \nabla_Z^2 V} (\Sigma^S)^{-1} \big(\nabla_z V   (\alpha + \beta x - r_f \mathbf{1}_N) + z \Sigma^{S,X} \nabla^2_{x,z}V\big)\\
        &= - \frac{\nabla_z V}{ z \nabla_z^2 V} (\Sigma^S)^{-1}(\alpha + \beta x - r_f \mathbf{1}_N \big) - (\Sigma^S)^{-1}\Sigma^{S,X}\frac{ \nabla^2_{x,z}V}{ z \nabla_z^2 V}.
\end{split}
\end{equation}
We now give an explicit characterization for the value function and the optimal policy. For a CRRA utility, the value function takes the form
\begin{equation}
\label{eq:EC_VF_form}
    V(t,z,x) = \frac{z^{1-\gamma}}{1-\gamma}\exp\left(\frac{1}{2}x^\top A(t) x + x^\top b(t) + c(t)\right),
\end{equation}
where $A(t) \in \mathbb{R}^{N \times N}$, $b(t) \in \mathbb{R}^{N}$, and $c(t) \in \mathbb{R}$ are derived from a system of ODEs, which we determine next. 

From the structure \eqref{eq:EC_VF_form}, we have
\begin{equation*}
    \begin{cases}
        \dfrac{\partial V}{\partial t} &=  (\dfrac{1}{2}x^\top A'(t) x + x^\top b'(t) + c'(t)) V,\\
        \nabla_z V &= \dfrac{1 - \gamma}{z}V,\\
        \nabla_x V &= (A(t) x + b(t)) V, \\
        \nabla_z^2V &= \dfrac{-\gamma(1 - \gamma)}{z^2}V,\\
        \nabla_x^2 V &= (A(t) + (A(t)x + b(t))(A(t)x+b(t))^\top)V,\\
        \nabla_{x,z}^2 V &=  \dfrac{1-\gamma}{z}(A(t)x + b(t))V,\\
    \end{cases}
\end{equation*}
Therefore, \eqref{eq:EC_pi_ori} becomes
\begin{equation*}
    \pi^*(t) = \frac{1}{\gamma} (\Sigma^S)^{-1} \left(\alpha + \beta x -r_f\mathbf{1}_N \right) + \frac{1}{\gamma}(\Sigma^S)^{-1}\Sigma^{S,X} \left(A(t) x + b(t) \right),
\end{equation*}
and the HJB \eqref{eq:EC_HJB} can be written as
\begin{equation}
\label{eq:EC_HJB2}
    \begin{split}
        &\frac{1}{2}x^\top A'(t) x + x^\top b'(t) + c'(t) + (1-\gamma)r_f \\+& \frac{1-\gamma}{\gamma} \big((\Sigma^S)^{-1} (\alpha + \beta x - r_f \mathbf{1}_N) + (\Sigma^S)^{-1} \Sigma^{S,X}(A(t)x + b(t)) \big)^\top  (\alpha + \beta x - r_f \mathbf{1}_N)\\
        + & \mu^X(t,x,y)^\top (A(t) x + b(t))\\
        - & \frac{1 - \gamma}{2\gamma}\big((\Sigma^S)^{-1} (\alpha + \beta x - r_f \mathbf{1}_N) + (\Sigma^S)^{-1}\Sigma^{S,X}( A(t)x + b(t)) \big)^\top \big((\tilde{\Theta}(t)\tilde{\mu}(t,y)  - r_f \mathbf{1}_N) + \Sigma^{S,X}(A(t)x + b(t)) \big)\\
        + & \frac{1-\gamma}{\gamma}\big((\Sigma^S)^{-1} (\tilde{\Theta}(t)\tilde{\mu}(t,y) - r_f \mathbf{1}_N) + (\Sigma^S)^{-1} \Sigma^{S,X}( A(t)x + b(t))\big)^\top\Sigma^{S,X} (A(t)x + b(t))\\
        + & \frac{1}{2} \Tr\big(A(t) \Sigma^X + (A(t)x + b(t))^\top \Sigma^X (A(t)x + b(t)) \big) = 0.
    \end{split}
\end{equation}
Now we derive the system of ODEs by separation of variables. Recall that
    \begin{equation*}
    \begin{split}
        &\tilde{\Theta}(t) = \Theta + L^X \eta(t) P e^{-\Theta (T-t)},\\
       & \tilde{\mu}(t,y) =  \mu + \tilde{\Theta}(t)^{-1} L^X \eta(t) y,\\
       & \eta(t) = (P e^{-\Theta (T-t)} L^X)^\top \big(P (\Sigma - e^{-\Theta(T-t)} \Sigma e^{-\Theta^\top (T-t)})P^\top + \Omega\big)^{-1},
            \end{split}
    \end{equation*}
and
 \begin{equation*}
        \begin{split}
            &\tilde{\alpha}(t,y) = \alpha + L^S \eta(t) (y- P ( 1-e^{-\Theta (T-t)})\mu_,  \\
            &\tilde{\beta}(t) = \beta -  L^S \eta(t) P e^{-\Theta (T-t)}, \\
        \end{split}
    \end{equation*}
    Thus, by separation of variables, the HJB \eqref{eq:EC_HJB2} can be split into the following ODEs:
    \begin{equation}
    \label{eq:EC_A_Ricatti}
    \begin{split}
                A'(t) &+ \frac{1-\gamma}{\gamma} \tilde{\beta}(t)^\top (\Sigma^S)^{-1} \tilde{\beta}(t) + A(t) \big(\Sigma^X + \frac{1-\gamma}{\gamma} (\Sigma^{S,X})^\top (\Sigma^S)^{-1} \Sigma^{S,X}\big) A(t) \\
        &+ A(t) \big(\frac{1-\gamma}{\gamma}(\Sigma^{S,X})^\top (\Sigma^S)^{-1} \tilde{\beta}(t) - \tilde{\Theta}(t)\big) + \big(\frac{1-\gamma}{\gamma}(\Sigma^{S,X})^\top (\Sigma^S)^{-1} \tilde{\beta}(t) - \tilde{\Theta}(t)\big)^\top A(t) = 0,
        \end{split}
\end{equation}
$b(t) \in \mathbb{R}^d$ solves a system of linear ODEs
\begin{equation*}
    \begin{split}
        b'(t) &+ \frac{1-\gamma}{\gamma} \big( \tilde{\beta}(t)^\top + A(t) (\Sigma^{S,X})^\top\big) (\Sigma^S)^{-1} \big( \Sigma^{S,X} b(t) + \tilde{\alpha}(t,y)  - r_f \mathbf{1}_N \big) \\ &+ \big(A(t) \Sigma^X - \tilde{\Theta} (t)^\top\big) b(t) + A(t)\tilde{\Theta}(t) \tilde{\mu}(t,y) = 0,
    \end{split}
\end{equation*}
and $c(t) \in \mathbb{R}$ is given by
\begin{equation*}
    \begin{split}
        c'(t) &+(1-\gamma)r_f + \frac{1}{2}\Tr(\Sigma^X A(t)) + \tilde{\Theta}(t)\tilde{\mu}(t,y)^\top b(t) + \frac{1}{2}b(t)^\top \Sigma^X b(t)\\
        &+ \frac{1-\gamma}{2\gamma}\big(\Sigma^{S,X} b(t) + \tilde{\alpha}(t,y)  - r_f \mathbf{1}_N \big)^\top (\Sigma^S)^{-1}\big(\Sigma^{S,X} b(t) + \tilde{\alpha}(t,y)- r_f \mathbf{1}_N \big) = 0.
    \end{split}
\end{equation*}
Additionally, at time $T$ we have
\begin{equation*}
    V(T,z,x) = U(z), \, \, \forall x
\end{equation*}
thus, the terminal conditions of the differential equations are $A(T) = 0$, $b(T) = 0$, and $c(T) = 0$.



Given the terminal condition \( A(T) = 0 \), we aim to prove that \( A(t) \), which satisfies \eqref{eq:EC_A_Ricatti}, is symmetric and negative semi-definite.  

Since \( A(T) = 0 \) is symmetric, and the matrices \( \Sigma^S \) and \( \tilde{\beta}(t)^\top (\Sigma^S)^{-1} \tilde{\beta}(t) \) are also symmetric, it follows that \( A(t) \) remains symmetric for all \( t \in [0,T] \).  

Furthermore, since \( \Sigma^S \) is positive definite, we have that \( \tilde{\beta}(t)^\top (\Sigma^S)^{-1} \tilde{\beta}(t) \) is positive semi-definite. Consequently,  
\begin{equation*}
    \frac{1-\gamma}{\gamma} \tilde{\beta}(t)^\top (\Sigma^S)^{-1} \tilde{\beta}(t)
\end{equation*}  
is negative semi-definite for \( \gamma > 1 \). Moreover
\begin{equation*}
\begin{split}
    \Sigma^X + \frac{1-\gamma}{\gamma} (\Sigma^{S,X})^\top (\Sigma^S)^{-1} \Sigma^{S,X} &= \Sigma^X + \frac{1-\gamma}{\gamma} L^X (L^S)^\top (L^S (L^S)^\top)^{-1} L^S (L^X)^\top  \\
    &= \Sigma^X + \frac{1-\gamma}{\gamma} L^X \Pi_S (L^X)^\top 
    \end{split}
\end{equation*}
where \(\Pi_S := (L^S)^\top (L^S (L^S)^\top)^{-1} L^S \). By definition \( 0 \preceq \Pi_S \preceq I_N\), it follows that for \(\gamma > 1\) (so \(\frac{1-\gamma}{\gamma} < 0\)) we have
\begin{equation}
\label{ECeq:nsd_proof}
     \Sigma^X + \frac{1-\gamma}{\gamma} (\Sigma^{S,X})^\top (\Sigma^S)^{-1} \Sigma^{S,X} \succeq \Sigma^X + \frac{1-\gamma}{\gamma} \Sigma^X  \succeq 0.
\end{equation}
Additionally, the terminal condition \( A(T) = 0 \) is itself negative semi-definite. By the result in \cite{Ricatti}, it follows that \( A(t) \) is negative semi-definite for \( \gamma > 1 \) (and positive semi-definite for \( \gamma < 1 \)).  

To make this more explicit, we can rewrite \eqref{eq:EC_A_Ricatti} as  
\begin{equation*}
    \lim_{dt \to 0} A(T) - A(T-dt) = - \frac{1-\gamma}{\gamma}\tilde{\beta}(t)^\top (\Sigma^S)^{-1} \tilde{\beta}(t) \succeq 0.
\end{equation*}  
Since \( A(T) = 0 \), it follows that \( A(T - dt) \) is negative semi-definite. Applying the same reasoning, we conclude that \( A(t) \) remains negative semi-definite for all \( t \in [0,T] \).  

This completes the proof. \hfill \( \square \)  

\subsection{Proof of Proposition \ref{prop:Views_are_good} }
\label{App:Proof-prop-views-good}
Let $\Pi_0$ be the class of admissible policies adapted to the natural filtration $\mathcal{F}_t = \sigma(W_s; s \le t)$, and let $\Pi$ be the class of admissible policies adapted to the enlarged filtration $\mathcal{F}_t^Y = \sigma(\mathcal{F}_t \vee \sigma(Y(0,T)))$. An essential observation is that any policy admissible in the no-views setting is also admissible in the with-views setting (i.e., $\Pi_0 \subset \Pi$).

The value function for an investor without views $V_0(t, z, x)$ is defined as the maximum expected utility achievable using policies from $\Pi_0$
$$
V_0(t, z, x) = \max_{\pi \in \Pi_0} \mathbb{E}_{\mathbb{P}}[U(Z(T)) | Z(t)=z, X(t)=x]
$$
Let $\pi_0^*(t)$ be the optimal policy for this problem. By definition, the value function is
$$
V_0(t, z, x) = \mathbb{E}_{\mathbb{P}}[U(Z^{\pi_0^*}(T)) | Z(t)=z, X(t)=x]
$$
where $Z^{\pi_0^*}$ denotes the wealth process when following the policy $\pi_0^*$.

The value function for an investor with a specific view realization $Y(0,T)=y$, which we here write as $V(t, z, x; y)$, is defined as
$$
V(t, z, x; y) = \max_{\pi \in \Pi} \mathbb{E}_{\mathbb{Q}}[U(Z(T)) \,|\, Z(t)=z, X^y(t)=x]
$$
where $\mathbb{Q} = \mathbb{P}(\cdot \,|\, Y(0,T)=y)$. Let $\pi^*(t;y)$ be the optimal policy for this problem.

Since the no-views optimal policy $\pi_0^*$ is adapted to $\mathcal{F}_t$, it is also adapted to the larger filtration $\mathcal{F}_t^Y$. Thus, $\pi_0^*$ is an admissible, though possibly suboptimal, policy for the investor who has observed the forward-looking views $Y(0,T) = y$. By the definition of $V(t, z, x; y)$, it must be greater than or equal to the value obtained from any other admissible policy, including $\pi_0^*$. For any given $y$, we have
$$
V(t, z, x; y) \ge \mathbb{E}_{\mathbb{Q}}[U(Z^{\pi_0^*}(T)) | Z(t)=z, X^y(t)=x]
$$
This inequality holds for every possible realization $y$ of the view vector $Y(0,T)$. We now take the expectation of both sides with respect to the distribution of $Y(0,T)$, conditional on the state at time $t$. Let $\mathbb{E}_y[\cdot | X(t)=x, Z(t)=z]$ denote this expectation.
$$
\mathbb{E}_y[V(t, z, x; Y) | X(t)=x, Z(t)=z] \ge \mathbb{E}_y \left[ \mathbb{E}_{\mathbb{Q}}[U(Z^{\pi_0^*}(T)) | Z(t)=z, X^y(t)=x] \Big| X(t)=x, Z(t)=z \right]
$$
The right-hand side is an iterated expectation. Using the law of total expectation, we can simplify its expression
\begin{align*}
\text{RHS} &= \mathbb{E}_y \left[ \mathbb{E}_{\mathbb{P}}[U(Z^{\pi_0^*}(T)) | Y(0,T)=y, Z(t)=z, X(t)=x] \Big| X(t)=x, Z(t)=z \right] \\
&= \mathbb{E}_{\mathbb{P}}[U(Z^{\pi_0^*}(T)) | Z(t)=z, X(t)=x]\\
&= V_0(t,z,x).
\end{align*}
The final expression follows from the definition of the value function without views. Therefore, we have shown
$$
\mathbb{E}_y[V(t, z, x; Y) | X(t)=x, Z(t)=z] \ge V_0(t, z, x)
$$
which completes the proof.
\hfill$\square$



\subsection{Proof of Theorem \ref{thm:Optimal_policy} }
\label{App:Proof-thm-sec5}
We start by introducing the following Lemma 
\begin{lemma}
\label{lemma:separation_ODEs}
     Consider the investor's hedging demand in \eqref{eq:optimalPolicy}, where $A(t)$ and $b(t)$ satisfy  \eqref{eq:Riccati_views} and \eqref{eq:ODEsys_views}, respectively. Then, we have
     \begin{equation*}
              \begin{cases}
         A(t) = A_1(t) + \hat{A}(t)\\
         b(t) = b_1(t) + \hat{b}(t),\\
     \end{cases}
     \end{equation*}
where $A_1(t)$ is a symmetric negative definite matrix satisfying the Riccati equation 
\begin{equation}
\label{eq:EC_Ricatti_1_ori}
\begin{split}
        A_1'(t) &+ \frac{1-\gamma}{\gamma} \beta^\top (\Sigma^S)^{-1} \beta(t) + A_1(t) \big(\Sigma^X + \frac{1-\gamma}{\gamma} (\Sigma^{S,X})^\top (\Sigma^S)^{-1} \Sigma^{S,X}\big) A_1(t) \\
        &+ A_1(t) \big(\frac{1-\gamma}{\gamma}(\Sigma^{S,X})^\top (\Sigma^S)^{-1} \beta - \Theta\big) + \big(\frac{1-\gamma}{\gamma}(\Sigma^{S,X})^\top (\Sigma^S)^{-1} \beta - \Theta\big)^\top A_1(t) = 0,
        \end{split}
\end{equation}
and $b_1(t)$ solves the ODEs system 
\begin{equation}
\label{eq:EC_ODE_1_ori}
\begin{split}
               b_1'(t) &+ \frac{1-\gamma}{\gamma} \big( \beta^\top + A_1(t) (\Sigma^{S,X})^\top\big) (\Sigma^S)^{-1} \big( \Sigma^{S,X} b_1(t) + \alpha - r_f \mathbf{1}_N \big) \\&+ \big(A_1(t) \Sigma^X - \Theta^\top\big) b_1(t) + A_1(t) \Theta \mu = 0,
        \end{split}
\end{equation} with terminal conditions
\begin{equation*}
     \begin{cases}
     A_1(T) = -P^\top \Omega^{-1} P\\
     b_1(T) = P^\top \Omega^{-1} y.\\
 \end{cases}
\end{equation*}
The matrix $\hat{A}(t)$ is symmetric positive semi-definite   
\begin{equation*}
    \hat{A}(t) =(P e^{-\Theta (T-t)})^\top \big( P (\Sigma - e^{-\Theta (T-t)} \Sigma e^{-\Theta^\top (T-t)}) P^\top + \Omega \big)^{-1} P e^{-\Theta (T-t)},
\end{equation*}
and $\hat{b}(t)$ can be expressed as
\begin{equation*}
        \hat{b}(t) = (P e^{-\Theta (T-t)})^\top \big( P (\Sigma - e^{-\Theta (T-t)} \Sigma e^{-\Theta^\top (T-t)}) P^\top + \Omega \big)^{-1} \big( P (1 - e^{-\Theta (T-t)} ) \mu - y \big).
\end{equation*}
\end{lemma}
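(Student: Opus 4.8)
## Proof Plan for Lemma \ref{lemma:separation_ODEs}

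The plan is to guess the additive decomposition $A(t) = A_1(t) + \hat A(t)$ and $b(t) = b_1(t) + \hat b(t)$ and verify it directly by substitution, exploiting the fact that the "hat" pieces are precisely the terms that account for the view-induced corrections to the coefficients $\tilde\Theta(t),\tilde\beta(t),\tilde\alpha(t,y),\tilde\mu(t,y)$. The starting point is the observation, from Corollary \ref{corr:S^y} and Proposition \ref{prop:X^y}, that all view-dependence in the coefficients enters through the single matrix $\eta(t)$ and the combination $\eta(t)Pe^{-\Theta(T-t)}$; specifically $\tilde\Theta(t) - \Theta = L^X\eta(t)Pe^{-\Theta(T-t)}$ and $\tilde\beta(t) - \beta = -L^S\eta(t)Pe^{-\Theta(T-t)}$. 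So I would introduce the shorthand $\Phi(t) := \eta(t)Pe^{-\Theta(T-t)}$ and record that $\hat A(t) = (Pe^{-\Theta(T-t)})^\top M(t)^{-1} Pe^{-\Theta(T-t)}$ where $M(t) := P(\Sigma - e^{-\Theta(T-t)}\Sigma e^{-\Theta^\top(T-t)})P^\top + \Omega$, so that $L^X\Phi(t) = L^X(L^X)^\top \cdot (\text{something})$... more usefully, $\Phi(t) = (Pe^{-\Theta(T-t)}L^X)^\top M(t)^{-1}$ composed appropriately; the key algebraic identity to extract is $\hat A(t) = (Pe^{-\Theta(T-t)})^\top M(t)^{-1} Pe^{-\Theta(T-t)}$ and that $\tilde\Theta(t) = \Theta + L^X\Phi(t)$, $\tilde\beta(t) = \beta - L^S\Phi(t)$, with $\Phi(t)$ related to $\hat A$ via $L^X\Phi(t)$-type contractions.

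First I would compute $\hat A'(t)$ in closed form. Since $\hat A(t) = G(t)^\top M(t)^{-1} G(t)$ with $G(t) = Pe^{-\Theta(T-t)}$, differentiation gives $G'(t) = G(t)\Theta$ and $M'(t) = -\frac{d}{dt}[Ge^{\Theta(T-t)}\Sigma e^{\Theta^\top(T-t)}G^\top]$ — actually it is cleaner to note $\Sigma - e^{-\Theta(T-t)}\Sigma e^{-\Theta^\top(T-t)} = \mathbb V[X(T)\mid X(t)]$ wait, rather $= \int_t^T e^{-\Theta(T-u)}\Sigma^X e^{-\Theta^\top(T-u)}du$, whose $t$-derivative is $-e^{-\Theta(T-t)}\Sigma^X e^{-\Theta^\top(T-t)}$. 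Then $M'(t) = -Pe^{-\Theta(T-t)}\Sigma^X e^{-\Theta^\top(T-t)}P^\top$. Using $\frac{d}{dt}M^{-1} = -M^{-1}M'M^{-1}$ and collecting terms, I expect $\hat A'(t)$ to have the form $\Theta^\top \hat A(t) + \hat A(t)\Theta + \hat A(t)\Sigma^X\hat A(t) + (\text{a cross term in }\Phi)$. Then I would plug $A(t) = A_1(t) + \hat A(t)$ into the Riccati equation \eqref{eq:Riccati_views}, substitute $\tilde\Theta(t) = \Theta + L^X\Phi(t)$ and $\tilde\beta(t) = \beta - L^S\Phi(t)$, expand everything, and check that the equation splits: the terms involving only $A_1$ and constant coefficients reproduce \eqref{eq:Riccati_no_views}, while the remaining terms (those carrying $\hat A$, $\Phi$, and the mixed $A_1$–$\Phi$ products) cancel identically by virtue of the ODE satisfied by $\hat A$. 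The terminal condition follows since $\hat A(T) = (P)^\top\Omega^{-1}P$ (because $M(T) = \Omega$ and $e^{-\Theta\cdot 0} = I$) and $A(T) = 0$ forces $A_1(T) = -P^\top\Omega^{-1}P$.

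Then I would repeat the same procedure for the linear ODE: substitute $b(t) = b_1(t) + \hat b(t)$ and $A(t) = A_1(t) + \hat A(t)$ into \eqref{eq:ODEsys_views}, use the expressions for $\tilde\alpha(t,y)$ and $\tilde\Theta(t)\tilde\mu(t,y) = \Theta\mu + L^X\eta(t)(y - P\mu)$ (which follows directly from the definition of $\tilde\mu$ in Proposition \ref{prop:X^y}), compute $\hat b'(t)$ by differentiating $\hat b(t) = G(t)^\top M(t)^{-1}(P(I - e^{-\Theta(T-t)})\mu - y)$, and verify that the $b_1$-part gives \eqref{eq:ODEsys_no_views} while the $\hat b$-part cancels. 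The terminal condition $\hat b(T) = P^\top\Omega^{-1}(\mathbf 0 - y) = -P^\top\Omega^{-1}y$ together with $b(T) = 0$ gives $b_1(T) = P^\top\Omega^{-1}y$. Positive semidefiniteness of $\hat A(t)$ is immediate from its form $G^\top M^{-1}G$ with $M\succ 0$ (Assumption \ref{ass:nondeg}(iii) plus the positive-semidefiniteness of the conditional-variance term), and symmetry is manifest.

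The main obstacle is the bookkeeping in the Riccati substitution: verifying the cancellation of the cross terms requires recognizing that several superficially different matrix products — e.g. $L^X\Phi(t)$, $(L^{S,X})^\top(\Sigma^S)^{-1}L^S\Phi(t)$, and pieces of $\hat A(t)\Sigma^X$ — are tied together through the single identity defining $\hat A'(t)$, and through $\Phi(t) = \mathrm{(something)}\cdot M(t)^{-1}Pe^{-\Theta(T-t)}$ so that $L^X\Phi(t)$ and $\hat A(t)$ share the common factor $M(t)^{-1}$. I would organize the computation by first deriving a self-contained ODE for $\hat A$ of the form $\hat A'(t) = -\mathcal N(\hat A(t), \Phi(t); \text{constants})$, state it as a sub-claim, prove it by the differentiation above, and only then feed it into the main Riccati identity so that the cancellation is a one-line consequence rather than a sprawling expansion. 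An alternative, possibly cleaner route worth mentioning: recognize $\hat A(t)$ as (minus) the solution of the \emph{value-function Riccati for the pure-smoothing sub-problem} — i.e. the quadratic form appearing in $\log$ of a Gaussian conditional density — in which case the decomposition $A = A_1 + \hat A$ is the statement that conditioning on $Y$ multiplies the value function by a Gaussian density factor, and the ODE split is automatic; but the direct verification is the safest path to write out.
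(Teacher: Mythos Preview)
Your proposal is correct and follows essentially the same route as the paper: define $\hat A = A - A_1$, substitute into the view-dependent Riccati, and show that $\hat A$ satisfies the clean equation $\hat A'(t) = \Theta^\top\hat A(t) + \hat A(t)\Theta + \hat A(t)\Sigma^X\hat A(t)$ with terminal condition $\hat A(T) = P^\top\Omega^{-1}P$ (no $\Phi$ cross term appears, so your hedged expectation there was overly cautious), then verify the explicit formula solves it; the $b$-part is handled the same way, with $\hat b$ satisfying a linear ODE driven by $\hat A$. Your alternative ``Gaussian density factor'' interpretation is a nice heuristic the paper does not make explicit.
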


From Lemma \ref{lemma:separation_ODEs}, we have that
\begin{equation*}
      \begin{cases}
         A(t) = A_1(t) + \hat{A}(t)\\
         b(t) = b_1(t) + \hat{b}(t),\\
     \end{cases}
     \end{equation*}
where $A_1(t)$ the Riccati equation \eqref{eq:EC_Ricatti_1_ori}, and $b_1(t)$ solves the ODEs system \eqref{eq:EC_ODE_1_ori}, with terminal conditions
\begin{equation*}
     \begin{cases}
     A_1(T) = -P^\top \Omega^{-1} P\\
     b_1(T) = P^\top \Omega^{-1} y.\\
 \end{cases}
\end{equation*}
The matrix $\hat{A}(t)$ is symmetric with
\begin{equation*}
    \hat{A}(t) =(P e^{-\Theta (T-t)})^\top \big( P (\Sigma - e^{-\Theta (T-t)} \Sigma e^{-\Theta^\top (T-t)}) P^\top + \Omega \big)^{-1} P e^{-\Theta (T-t)},
\end{equation*}
and $\hat{b}(t)$ can be expressed as
\begin{equation*}
        \hat{b}(t) = (P e^{-\Theta (T-t)})^\top \big( P (\Sigma - e^{-\Theta (T-t)} \Sigma e^{-\Theta^\top (T-t)}) P^\top + \Omega \big)^{-1} \big( P (1 - e^{-\Theta (T-t)} ) \mu - y \big).
\end{equation*}
It follows that the hedging demand in \eqref{eq:optimalPolicy} takes the form 
\begin{equation*}
\begin{split}
     \frac{1}{\gamma}  (\Sigma^S)^{-1}\Sigma^{S,X}\frac{\partial g}{\partial x}(t,x) &=  \frac{1}{\gamma}  (\Sigma^S)^{-1}\Sigma^{S,X} \left(A(t) x + b(t)\right)\\
     &= \frac{1}{\gamma}  (\Sigma^S)^{-1}\Sigma^{S,X} \left((A_1(t) + \hat{A}(t)) x + b_1(t) + \hat{b}(t)\right)\\
     &= \frac{1}{\gamma} (\Sigma^S)^{-1}\Sigma^{S,X} \left(\hat{A}(t) x + \hat{b}(t) \right) + \frac{1}{\gamma}  (\Sigma^S)^{-1}\Sigma^{S,X} \left(A_1(t) x + b_1(t)\right).
     \end{split}
\end{equation*}
We now prove that 
\begin{equation*}
    \frac{1}{\gamma} (\Sigma^S)^{-1}\Sigma^{S,X} \left(\hat{A}(t) x + \hat{b}(t) \right) = \frac{1}{\gamma} (\Sigma^S)^{-1} \left(\alpha + \beta x - (\tilde{\alpha}(t,y) + \tilde{\beta}(t) x)\right).
\end{equation*}

We have 
\begin{equation*}
    \begin{split}
        \frac{1}{\gamma} (\Sigma^S)^{-1}\Sigma^{S,X} \left(\hat{A}(t) x + \hat{b}(t) \right) &= \frac{1}{\gamma} (\Sigma^S)^{-1} L^S (L^X)^\top \left(\hat{A}(t) x + \hat{b}(t) \right)\\
        &= \frac{1}{\gamma} (\Sigma^S)^{-1} L^S \left( \eta(t) P e^{-\Theta (T-t)} x + \eta(t) \left(P (1 - e^{-\Theta (T-t)} \mu - y \right)\right),
    \end{split}
\end{equation*}
where the second equality follows from the fact that
\begin{equation*}
    \begin{split}
        (L^X)^\top \hat{A}(t) &= \eta(t) P e^{-\Theta (T-t)},\\
        (L^X)^\top \hat{b}(t) &= \eta(t) \left(P (1 - e^{-\Theta (T-t)}) \mu - y \right).
    \end{split}
\end{equation*}
By definition, we have
\begin{equation*}
        \begin{cases}
            &\tilde{\alpha}(t) = \alpha - L^S \eta(t) P ( 1-e^{-\Theta (T-t)})\mu  \\
            &\tilde{\beta}(t) = \beta -  L^S \eta(t) P e^{-\Theta (T-t)} \\
            &\tilde{\gamma}(t) = L^S \eta(t)\\
        \end{cases}
    \end{equation*}
Thus,
\begin{equation*}
    \eta(t) P e^{-\Theta (T-t)} x + \eta(t) \left(P (1 - e^{-\Theta (T-t)} \mu - y \right) = \alpha + \beta x - (\tilde{\alpha}(t,y) + \tilde{\beta}(t) x),
\end{equation*}
and
\begin{equation*}
        \frac{1}{\gamma} (\Sigma^S)^{-1}\Sigma^{S,X} \left(\hat{A}(t) x + \hat{b}(t) \right) = \frac{1}{\gamma} (\Sigma^S)^{-1} \left(\alpha + \beta x - (\tilde{\alpha}(t,y) + \tilde{\beta}(t) x)\right).
\end{equation*}
Therefore,
\begin{equation*}
    \begin{split}
        \frac{1}{\gamma}  (\Sigma^S)^{-1}\Sigma^{S,X}\frac{\partial g}{\partial x}(t,x) &=  \frac{1}{\gamma} (\Sigma^S)^{-1}\Sigma^{S,X} \left(\hat{A}(t) x + \hat{b}(t) \right) + \frac{1}{\gamma}  (\Sigma^S)^{-1}\Sigma^{S,X} \left(A_1(t) x + b_1(t)\right)\\
        &= \frac{1}{\gamma} (\Sigma^S)^{-1} \left(\alpha + \beta x - (\tilde{\alpha}(t,y) + \tilde{\beta}(t) x)\right) + \frac{1}{\gamma}  (\Sigma^S)^{-1}\Sigma^{S,X} \left(A_1(t) x + b_1(t)\right),
    \end{split}
\end{equation*}
and it follows that the optimal policy \eqref{eq:optimalPolicy} takes the form
\begin{equation*}
    \begin{split}
        \pi^*(t) &= \frac{1}{\gamma}(\tilde{\alpha}(t,y) + \tilde{\beta}(t) x - r_f \mathbf{1}_N) + \frac{1}{\gamma}  (\Sigma^S)^{-1}\Sigma^{S,X}\frac{\partial g}{\partial x}(t,x)\\
        &= \frac{1}{\gamma}(\alpha + \beta x - r_f \mathbf{1}_N) + \frac{1}{\gamma}  (\Sigma^S)^{-1}\Sigma^{S,X}(A_1(t)x + b_1(t)).
    \end{split}
\end{equation*}

We now prove that the matrix $Q(t) := A_1(t) - A_0(t)$ is negative semi-definite ($Q(t) \preceq 0$) and that it is a decreasing function of the view precision matrix $\Omega^{-1}$. Both $A_1(t)$ and $A_0(t)$ are solutions to the same constant-coefficient Riccati equation given in \eqref{eq:Riccati_no_views}, but with different terminal conditions.

\paragraph{1. Proof that $A_1(t) - A_0(t)$ is Negative Semi-Definite.}
Let $Q(t) = A_1(t) - A_0(t)$. We first analyze the terminal condition of $Q(t)$ at $t=T$.
\begin{itemize}
    \item The terminal condition for the no-views case is $A_0(T) = 0$.
    \item The terminal condition for the views case is $A_1(T) = -P^{\top}\Omega^{-1}P$.
\end{itemize}
Therefore, the terminal condition for their difference is:
\[
Q(T) = A_1(T) - A_0(T) = -P^{\top}\Omega^{-1}P.
\]
By Assumption \ref{ass:nondeg}, the view covariance matrix $\Omega$ is positive definite, which implies its inverse, the precision matrix $\Omega^{-1}$, is also positive definite. For any matrix $P$, the product $P^{\top}\Omega^{-1}P$ is positive semi-definite. Consequently, $Q(T) = -(P^{\top}\Omega^{-1}P)$ is negative semi-definite.

Additionally, by subtracting the Riccati equation for $A_0(t)$ from that of $A_1(t)$, we find that $Q(t)$ satisfies a homogeneous Riccati differential equation
\begin{equation}
\label{eq:EC_Q(t)_riccati}
\begin{cases}
        Q'(t) & + Q(t) \big(\Sigma^X + \frac{1-\gamma}{\gamma} (\Sigma^{S,X})^\top (\Sigma^S)^{-1} \Sigma^{S,X}\big) Q(t)
        + Q(t) \big(\frac{1-\gamma}{\gamma}(\Sigma^{S,X})^\top (\Sigma^S)^{-1} (\Sigma^{S,X} A_0(t) + \beta) +\Sigma^X A_0(t) - \Theta\big)\\ &+ \big(\frac{1-\gamma}{\gamma}(\Sigma^{S,X})^\top (\Sigma^S)^{-1} (\Sigma^{S,X} A_0(t) + \beta) +\Sigma^X A_0(t) - \Theta\big)^\top Q(t) = 0\\
        Q(T) &= - P^\top \Omega^{-1}P.
\end{cases}
\end{equation}
We showed in \eqref{ECeq:nsd_proof} that \[\Sigma^X + \frac{1-\gamma}{\gamma} (\Sigma^{S,X})^\top (\Sigma^S)^{-1} \Sigma^{S,X}\succeq 0.\] It follows then from \cite{Ricatti} that \(Q(t) \preceq 0\) for \(t \in [0,T]\). 
Thus, we conclude that
\[
A_1(t) - A_0(t) \preceq 0 \quad \forall t \in [0,T].
\]

\paragraph{2. Proof that $A_1(t) - A_0(t)$ is Decreasing in View Precision $\Omega^{-1}$.}
The proof of this result follows the same steps as the previous part. Let's consider two different sets of views characterized by their precision matrices, $\Pi_a = \Omega_a^{-1}$ and $\Pi_b = \Omega_b^{-1}$. Assume that the second view is more precise, such that $\Pi_b \succeq \Pi_a$ in the positive semi-definite sense.

This gives rise to two solutions, $A_{1,a}(t)$ and $A_{1,b}(t)$, and their corresponding differences from the no-views case, $Q_a(t) = A_{1,a}(t) - A_0(t)$ and $Q_b(t) = A_{1,b}(t) - A_0(t)$.

We compare their terminal conditions at $t=T$.
Since $\Pi_b \succeq \Pi_a$, it follows that $P^{\top}\Pi_b P \succeq P^{\top}\Pi_a P$. Giving $-P^{\top}\Pi_b P \preceq -P^{\top}\Pi_a P$. Therefore, at the terminal time, we have $Q_b(T) \preceq Q_a(T)$.

Since both $Q_a(t)$ and $Q_b(t)$ satisfy the identical Riccati differential equation \eqref{eq:EC_Q(t)_riccati} and $Q_b(T) \preceq Q_a(T)$, it can be shown by a similar argument as part 1, that
\[
Q_b(t) \preceq Q_a(t) \quad \forall t \in [0,T].
\]
This shows that as view precision $\Omega^{-1}$ increases, the matrix difference $A_1(t) - A_0(t)$ becomes more negative semi-definite.

\subsection{Proof of Lemma \ref{lemma:separation_ODEs}}
Let \begin{equation*}
    V(t,z,x) = \frac{z^{1-\gamma}}{1-\gamma}\exp\left(\frac{1}{2}x^\top A(t) x + x^\top b(t) + c(t)\right),
\end{equation*} be the investor's value function. From the optimal policy characterization in Section \ref{sec51},  \(A(t) \in \mathbb{R}^{N \times N}\) solves
\begin{equation}
\label{eq:EC_Ricatti_Lemma}
    \begin{cases}
        A'(t) &+ \frac{1-\gamma}{\gamma} \tilde{\beta}(t)^\top (\Sigma^S)^{-1} \tilde{\beta}(t) + A(t) \big(\Sigma^X + \frac{1-\gamma}{\gamma} (\Sigma^{S,X})^\top (\Sigma^S)^{-1} \Sigma^{S,X}\big) A(t) \\
        &+ A(t) \big(\frac{1-\gamma}{\gamma}(\Sigma^{S,X})^\top (\Sigma^S)^{-1} \tilde{\beta}(t) - \tilde{\Theta}(t)\big) + \big(\frac{1-\gamma}{\gamma}(\Sigma^{S,X})^\top (\Sigma^S)^{-1} \tilde{\beta}(t) - \tilde{\Theta}(t)\big)^\top A(t) = 0,\\
        A(T) &= 0,
    \end{cases}
\end{equation}
where
\begin{equation*}
    \begin{cases}
        \eta(t) &= (P e^{-\Theta (T-t)} L^X)^\top \big(P (\Sigma - e^{-\Theta(T-t)} \Sigma e^{-\Theta^\top (T-t)})P^\top + \Omega\big)^{-1},\\
        \tilde{\Theta}(t) &= \Theta + L^X \eta(t) P e^{-\Theta (T-t)},\\
        \tilde{\beta}(t) &= \beta -  L^S \eta(t) P e^{-\Theta (T-t)}    .
    \end{cases}
\end{equation*}
Now, let \(A_1(t), \hat{A}(t) \in \mathbb{R}^{N \times N}\) be two symmetric matrices where \(A_1(t)\) satisfies
\begin{equation}
\label{eq:EC_Ricatti_1_Lemma}
\begin{split}
        A_1'(t) &+ \frac{1-\gamma}{\gamma} \beta^\top (\Sigma^S)^{-1} \beta(t) + A_1(t) \big(\Sigma^X + \frac{1-\gamma}{\gamma} (\Sigma^{S,X})^\top (\Sigma^S)^{-1} \Sigma^{S,X}\big) A_1(t) \\
        &+ A_1(t) \big(\frac{1-\gamma}{\gamma}(\Sigma^{S,X})^\top (\Sigma^S)^{-1} \beta - \Theta\big) + \big(\frac{1-\gamma}{\gamma}(\Sigma^{S,X})^\top (\Sigma^S)^{-1} \beta - \Theta\big)^\top A_1(t) = 0,
        \end{split}
\end{equation}
with terminal condition \(A_1(t) = -P^\top \Omega^{-1} P\),
and \[\hat{A}(t) = A(t) - A_1(t).\]
By substituting \(A(t)\) by \(\hat{A}(t) + A_1(t)\) in \eqref{eq:EC_Ricatti_Lemma}, we get
\begin{equation*}
    \begin{cases}
        (\hat{A}'(t) + A_1'(t)) &+ \frac{1-\gamma}{\gamma} \tilde{\beta}(t)^\top (\Sigma^S)^{-1} \tilde{\beta}(t)\\ &+ (\hat{A}(t) + A_1(t)) \big(\Sigma^X + \frac{1-\gamma}{\gamma} (\Sigma^{S,X})^\top (\Sigma^S)^{-1} \Sigma^{S,X}\big) (\hat{A}(t) + A_1(t)) \\
        &+ (\hat{A}(t) + A_1(t)) \big(\frac{1-\gamma}{\gamma}(\Sigma^{S,X})^\top (\Sigma^S)^{-1} \tilde{\beta}(t) - \tilde{\Theta}(t)\big) \\ &+ \big(\frac{1-\gamma}{\gamma}(\Sigma^{S,X})^\top (\Sigma^S)^{-1} \tilde{\beta}(t) - \tilde{\Theta}(t)\big)^\top (\hat{A}(t) + A_1(t)) = 0,\\
        \hat{A}(T) + A_1(T) &= 0.
    \end{cases}
\end{equation*}
By noting that \(A_1(t)\) solves \eqref{eq:EC_Ricatti_1_Lemma}, it can be easily verified that \(\hat{A}(t)\) solves the Riccati
\begin{equation*}
    \hat{A}'(t) = \Theta^\top \hat{A}(t) + \hat{A}(t) \Theta + \hat{A}(t) \Sigma^X \hat{A}(t)
\end{equation*}
with terminal condition \(\hat{A}(T) = P^\top \Omega^{-1} P\). The solution to the above Riccati can be derived explicit and is 
\begin{equation*}
    \hat{A}(t) =(P e^{-\Theta (T-t)})^\top \big( P (\Sigma - e^{-\Theta (T-t)} \Sigma e^{-\Theta^\top (T-t)}) P^\top + \Omega \big)^{-1} P e^{-\Theta (T-t)}.
\end{equation*}

We apply a similar approach for \(b(t)\). From the results  in Section \ref{sec51}, \(b(t)\) solves
\begin{equation}
\label{eq:EC_ODE_Lemma}
    \begin{cases}
        b'(t) &+ \frac{1-\gamma}{\gamma} \big( \tilde{\beta}(t)^\top + A(t) (\Sigma^{S,X})^\top\big) (\Sigma^S)^{-1} \big( \Sigma^{S,X} b(t) + \tilde{\alpha}(t) + \tilde{\gamma}(t) y - r_f \mathbf{1}_N \big) \\ &+ \big(A(t) \Sigma^X - \tilde{\Theta} (t)^\top\big) b(t) + A(t) \tilde{\mu}(t,y) = 0, \\
        b(T) &= 0.
    \end{cases}
\end{equation}
Now let \(b_1(t), \hat{b}(t) \in \mathbb{R}^{N}\) where \(b_1(t)\) satisfies
\begin{equation}
\label{eq:EC_ODE_1_Lemma}
\begin{split}
               b_1'(t) &+ \frac{1-\gamma}{\gamma} \big( \beta^\top + A_1(t) (\Sigma^{S,X})^\top\big) (\Sigma^S)^{-1} \big( \Sigma^{S,X} b_1(t) + \alpha - r_f \mathbf{1}_N \big) \\&+ \big(A_1(t) \Sigma^X - \Theta^\top\big) b_1(t) + A_1(t) \Theta \mu = 0,
        \end{split}
\end{equation}
with terminal condition \(b_1(t) = -P^\top \Omega^{-1} P\),
and \[\hat{b}(t) = b(t) - b_1(t).\]
Following the same steps we did for \(\hat{A}(t)\), by substituting \(b(t)\) by \(\hat{b}(t) + b_1(t)\) in \eqref{eq:EC_Ricatti_Lemma}, we get that \(\hat{b}(t)\) solves the following ODE
\begin{equation}
\begin{split}
        \hat{b}'(t) &+ \frac{1-\gamma}{\gamma} \beta^\top (\Sigma^S)^{-1} \left(\Sigma^{S,X} \hat{b}(t) - \hat{\alpha}_1(t) \mu + \hat{\alpha}_2(t) y \right)\\
        & -\Theta^\top \hat{b}(t) + \hat{A}(t) \left( \Theta \mu - \hat{\mu}_1(t) \mu + \hat{\mu}^2(t) y \right) = 0,       \end{split}
\end{equation}
with terminal condition \(\hat{b}(T) = P^\top \Omega^{-1}P \), and 
\begin{equation*}
    \begin{split}
        \hat{\alpha}_1(t) &= L^S \eta(t) P (1 - e^{-\Theta (T-t)}), \\
        \hat{\alpha}_2(t) &= L^S \eta(t),\\
        \hat{\mu}_1(t) &= L^X \eta(t) P (1 - e^{-\Theta (T-t)}), \\
        \hat{\mu}_2(t) &= L^X \eta(t). \\
    \end{split}
\end{equation*}
It can be directly verified that the solution to the ODE system \eqref{eq:EC_ODE_1_Lemma} is 
\begin{equation*}
        \hat{b}(t) = (P e^{-\Theta (T-t)})^\top \big( P (\Sigma - e^{-\Theta (T-t)} \Sigma e^{-\Theta^\top (T-t)}) P^\top + \Omega \big)^{-1} \big( P (1 - e^{-\Theta (T-t)} ) \mu - y \big).
\end{equation*}
This completes the proof. \hfill $\square$

\newpage
\section{Section \ref{sec6}}
\label{Appsec:KF}

\subsection{Proof of Proposition \ref{prop:KF}}
Since $\alpha$ is assumed to be constant and normally distributed with initial mean $\alpha_0$ and covariance $\Gamma_0$, and the observation process is given by
\[
dN^y(t) = H \alpha \, dt + G \, L^S\,dW^\mathbb{Q}(t),
\]
where 
\begin{equation*}
    H = \begin{pmatrix}
        I_N \\ \mathbf{0}_{d \times N}
    \end{pmatrix}, \quad G = \begin{pmatrix}
        L^S \\ L^X
    \end{pmatrix}.
\end{equation*}
The problem falls into the linear Gaussian filtering framework. In this setting, the Kalman–Bucy filter provides that the best estimate (in the mean square error sense) of $\alpha$ given the observation history $\mathcal{F}_t^Y$ is
\[
\hat{\alpha}(t) = \mathbb{E}\bigl[\alpha \mid \mathcal{F}_t^Y\bigr].
\]

The derivation of the dynamics of \(\hat{\alpha}(t)\) follows Theorem 6.3.1 of \cite{oksendal2003}, and its dynamics are driven by the so-called \emph{innovation process}.. We define the innovation process by
\[
dI(t) = dN^y(t) - H \hat{\alpha}(t) \, dt.
\]
Then, by the Kalman–Bucy theory, the dynamics of $\hat{\alpha}(t)$ are given by
\begin{equation}
\label{ECeq:hat_alpha}
    d\hat{\alpha}(t) = K(t) dI(t),
\end{equation}
where \begin{equation}
    \label{ECeq:K_gain}
K(t) = \Gamma(t) H^\top (GG^\top)^{-1} \in \mathbb{R}^{N \times (N+d)}
\end{equation} 
is the Kalman Gain matrix, with estimation error covariance
\begin{equation*}
    \Gamma(t) = \mathbb{E}[(\alpha - \hat{\alpha}(t))(\alpha - \hat{\alpha}(t))^\top | \mathcal{F}_t^Y] \in \mathbb{R}^{N \times N}.
\end{equation*}
\(\Gamma(t)\) satisfies the Riccati equation 
\[
\frac{d}{dt}\Gamma(t) = -\Gamma(t) \bigl(\Sigma^S - \Sigma^{S,X} (\Sigma^X)^{-1} (\Sigma^{S,X})^\top \bigr)^{-1}\Gamma(t),
\]
with initial condition \(\Gamma(0) = \Gamma_0\). It can be easily verified that the solution is 
\begin{equation}
    \label{ECeq:Gamma_t}
    \Gamma(t) = \left(\Gamma_0^{-1} + t (\Sigma^S - \Sigma^{S,X}(\Sigma^X)^{-1} (\Sigma^{S,X})^{-1})^{-1}\right)^{-1}.
\end{equation}

Thus, the filtering equation for $\hat{\alpha}(t)$ becomes
\[
d\hat{\alpha}(t) = K(t) \Bigl[dN^y(t)-H\hat{\alpha}(t)\,dt\Bigr],
\]
where \(K(t)\) is given by \eqref{ECeq:K_gain} -- \eqref{ECeq:Gamma_t},
with the initial condition $\hat{\alpha}(0) = \alpha_0$. This completes the result. \hfill $\square$

\subsection{Proof of The HJB and Value Function Characterization in Section \ref{subsec:HJB_KF}}

Let \(M(t) = [X^y(t)^\top, \hat{\alpha}(t)^\top]^\top\) be the augmented state vector. The investor's value function is defined as
\[
V(t,z,m) = \sup_{\pi\in\Pi} \mathbb{E}\Biggl[\frac{Z(T)^{1-\gamma}}{1-\gamma}\,\Big|\, Z(t)=z,\; M(t)=m\Biggr],
\]
with the state variables evolving according to
\begin{align}
\label{ECeq:StateVariables}    dZ(t) &= Z(t)\Biggl( r_f\,dt + \pi(t)^\top\Bigl\{\alpha + \tilde{\lambda}(t,y) + \tilde{\beta}(t)x - r_f\,\mathbf{1}_N\Bigr\}dt + \pi(t)^\top L^S\,dW^\mathbb{Q}(t) \Biggr), \\[1mm]
    dM(t) &= \Theta^M(t) (\mu^M(t) - M(t)) dt + \begin{pmatrix}
        L^X & G
    \end{pmatrix}^\top dW^{M,\mathbb{Q}}(t)
\end{align}
where 
\begin{equation*}
    W^{M,\mathbb{Q}}(t) = [W^\mathbb{Q}(t)^\top, V^\mathbb{Q}(t)^\top]^\top
\end{equation*}
is a Brownian motion, and \(dM(t) (dM(t))^\top =  L^M_t ( L^M_t )^\top \) with
\begin{equation*}
    L^M_t = \begin{pmatrix} L^X \\ K(t) G \end{pmatrix} \in \mathbb{R}^{(d + N ) \times N'},
\end{equation*}
and the mean-reverting and long-run mean coefficients are
\[\tilde{\Theta}^M(t) = \begin{pmatrix} \tilde{\Theta}(t) & \mathbf{0} \\ \mathbf{0} & \mathbf{0} \end{pmatrix}, \quad \tilde{\mu}^M(t, y) = \begin{pmatrix} \tilde{\mu}(t,y) \\ \mathbf{0} \end{pmatrix}.\]

The Hamilton-Jacobi-Bellman (HJB) equation for the value function $V(t,z,m)$ is $\sup_{\pi} \mathcal{L}^{\pi}V = 0$, where $\mathcal{L}^{\pi}$ is the infinitesimal generator of the process $(Z(t), M(t))$. Applying Itô's lemma to $V(t,z,m)$ yields:
\begin{align*}
\mathcal{L}^{\pi}V &= \frac{\partial V}{\partial t} + \frac{\partial V}{\partial z}\mathbb{E}[dZ(t)] + (\nabla_m V)^\top \mathbb{E}[dM(t)] + \frac{1}{2}\frac{\partial^2 V}{\partial z^2}\mathbb{E}[(dZ(t))^2] \\
&\quad+ \frac{1}{2}\operatorname{Tr}\Bigl(\nabla_m^2 V \cdot \mathbb{E}[dM(t)dM(t)^\top]\Bigr) + (\nabla_{z,m}^2 V)^\top\mathbb{E}[dM(t)dZ(t)].
\end{align*}
To characterize the generator, we identify the drift and covariance terms from the state dynamics. From \eqref{ECeq:StateVariables}
    \begin{align*}
    \mathbb{E}[dZ(t)] &= z\Bigl(r_f + \pi^\top(\tilde{\lambda}(t,y) + \tilde{\beta}^{M}(t)m - r_f\mathbf{1}_N)\Bigr)dt, \\
    \mathbb{E}[dM(t)] &= \tilde{\Theta}^{M}(t)\bigl(\tilde{\mu}^{M}(t,y) - m\bigr)dt.
    \end{align*}
    where \(\tilde{\beta}^M(t) = \begin{pmatrix}
        \tilde{\beta}(t) & I_N
    \end{pmatrix}.\)
 The quadratic and cross-variation terms are determined by the diffusion components of \eqref{ECeq:StateVariables}. Let $\Sigma^S = L^S(L^S)^\top$. We use the covariance matrices for the augmented system as defined in the paper: $\Sigma_t^M := L_t^M(L_t^M)^\top$ and the cross-covariance $\Sigma_t^{S,M} := L^S(L_t^M)^\top$.
    \begin{align*}
    \mathbb{E}[(dZ(t))^2] &= z^2\,\pi^\top \Sigma^S\,\pi\,dt, \\
    \mathbb{E}[dM(t)dM(t)^\top] &= \Sigma_t^M\,dt, \\
    \mathbb{E}[dM(t)dZ(t)] &= L_t^M (z\,\pi^\top L^S)^\top dt = z\,(\Sigma_t^{S,M})^\top\,\pi\,dt.
    \end{align*}
Substituting these terms into the generator yields the HJB equation
\begin{align*}
0 = \max_{\boldsymbol{\pi}}\Biggl\{ & \frac{\partial V}{\partial t} + z\Bigl[r_f + \boldsymbol{\pi}^\top\bigl(\tilde{\boldsymbol{\lambda}}(t,y) + \tilde{\boldsymbol{\beta}}^{M}(t)m - r_f\mathbf{1}_N\bigr)\Bigr]\frac{\partial V}{\partial z} + \Bigl(\Theta^{M}(t)\bigl(\boldsymbol{\mu}^{M}(t,y)-m\bigr)\Bigr)^\top\nabla_m V \\
&\quad+ \frac{1}{2}z^2\boldsymbol{\pi}^\top\Sigma^S\boldsymbol{\pi}\frac{\partial^2 V}{\partial z^2} + \frac{1}{2}\operatorname{Tr}\bigl(\Sigma_t^M \nabla_m^2 V\bigr) + z\boldsymbol{\pi}^\top\Sigma_t^{S,M}\nabla_{m,z}^{2}V \Biggr\}.
\end{align*}
This completes the derivation of the HJB equation. \hfill $\square$

\subsection{Proof of Proposition \ref{prop:Policy_KF}}
\label{App:Proof_Policy_KF}
For a CRRA utility \(U(z) = z^{1-\gamma}/(1-\gamma)\), the value function is of the form
\begin{equation}
\label{eq:EC_V_filtering}
    V(t,z,m) = U(z) \exp\left(g(t,m)\right),
\end{equation}
where \(g(t,m)\) is quadratic in \(m = (x^\top, \hat{\alpha}^\top)^\top\). In particular, we let 
\begin{equation}
\label{eq:EC_g_filtering}
\begin{split}
    g(t,m) &= \frac{1}{2}m^\top A(t) m + m^\top b(t) + c(t)\\
   &=  \frac{1}{2}x^\top A^x(t)x + x^\top b^x(t) + \frac{1}{2}\hat{\alpha}^\top A^{\alpha}(t)\hat{\alpha} + \hat{\alpha}^\top b^\alpha(t) + x^\top A^{x,\alpha}(t)\hat{\alpha} + c(t),
    \end{split}
\end{equation}
with \begin{equation*}
    A(t) = \begin{pmatrix}
        A^x(t) & A^{x,\alpha}(t) \\
        A^{x,\alpha}(t)^\top & A^\alpha(t)
    \end{pmatrix} \in \mathbb{R}^{(N+d) \times (N+d)},
\end{equation*}
and \begin{equation*}
    b(t) = \begin{pmatrix}
        b^x(t) \\ b^\alpha(t)
    \end{pmatrix} \in \mathbb{R}^{N+d}.
\end{equation*}

This problem is structurally identical to the one solved in Section \ref{sec51}, but for a higher-dimensional state. We can therefore find the solution by defining an augmented system and applying the results in Section \ref{sec51} directly.

From Section \ref{sec61}, the dynamics of the augmented state, $M(t) = [X^y(t)^\top, \hat{\alpha}(t)^\top]^\top$, can be written in a compact mean-reverting form
\[
    dM(t) = \tilde{\Theta}^M(t)\left(M(t) - \tilde{\mu}^M(t,y) \right) dt + dW^{M,\mathbb{Q}}(t).
\]
The corresponding augmented system matrices for the wealth and state processes are:
\begin{gather*}
    \tilde{\beta}^M(t) = \begin{pmatrix} \tilde{\beta}(t) & I_N \end{pmatrix}, \quad
    \tilde{\Theta}^M(t) = \begin{pmatrix} \tilde{\Theta}(t) & \mathbf{0} \\ \mathbf{0} & \mathbf{0} \end{pmatrix}, \quad
    \tilde{\mu}^M(t,y) = \begin{pmatrix} \tilde{\mu}(t,y) \\ \mathbf{0} \end{pmatrix}, \\
    \Sigma_t^M := L_t^M (L_t^M)^\top, \quad \text{and} \quad \Sigma_t^{S,M} := L^S (L_t^M)^\top.
\end{gather*}
where
\begin{equation*}
    L^M_t = \begin{pmatrix}
        L^X \\ K(t) G
    \end{pmatrix}.
\end{equation*}

By direct analogy to the HJB characterization in Section \ref{sec51}, the coefficient matrices $A(t)$, $b(t)$, and scalar $c(t)$ for the quadratic function $g(t,m) = \frac{1}{2}m^\top A(t) m + m^\top b(t) + c(t)$ must satisfy the same system of ODEs, but with the original system matrices replaced by their augmented counterparts.

Specifically, $A(t) \in \mathbb{R}^{(d+N)\times(d+N)}$ solves the matrix Riccati equation:
\begin{equation}
\label{eq:EC_Riccati_A_filtering}
\begin{cases}
    A'(t) + \frac{1-\gamma}{\gamma} \tilde{\beta}^M(t)^\top (\Sigma^S)^{-1} \tilde{\beta}^M(t) + A(t) \left(\Sigma_t^M + \frac{1-\gamma}{\gamma} (\Sigma_t^{S,M})^\top (\Sigma^S)^{-1} \Sigma_t^{S,M}\right) A(t) \\
    \quad + A(t) \left(\frac{1-\gamma}{\gamma}(\Sigma_t^{S,M})^\top (\Sigma^S)^{-1} \tilde{\beta}^M(t) - \tilde{\Theta}^M(t)\right) \\
    \quad + \left(\frac{1-\gamma}{\gamma}(\Sigma_t^{S,M})^\top (\Sigma^S)^{-1} \tilde{\beta}^M(t) - \tilde{\Theta}^M(t)\right)^\top A(t) = \mathbf{0},\\
    A(T) = 0.
\end{cases}
\end{equation}
The vector $b(t) \in \mathbb{R}^{d+N}$ solves the linear ODE system:
\begin{equation}
\label{eq:EC_ODEsys_b_filtering}
\begin{cases}
    b'(t) + \frac{1-\gamma}{\gamma} \left( \tilde{\beta}^M(t)^\top + A(t) (\Sigma_t^{S,M})^\top\right) (\Sigma^S)^{-1} \left( \Sigma_t^{S,M} b(t) + \tilde{\lambda}(t,y) - r_f \mathbf{1}_N \right) \\
    \quad + \left(A(t) \Sigma_t^M - \tilde{\Theta}^M(t)^\top\right) b(t) + A(t) \tilde{\mu}^M(t,y) = \mathbf{0}, \\
    b(T) = 0.
\end{cases}
\end{equation}
And the scalar $c(t) \in \mathbb{R}$ is found by integrating:
\begin{equation}
\label{eq:EC_c_filtering}
\begin{cases}
    c'(t) +(1-\gamma)r_f + \frac{1}{2}\operatorname{Tr}(\Sigma_t^M A(t)) + \tilde{\mu}^M(t,y)^\top b(t) + \frac{1}{2}b(t)^\top \Sigma_t^M b(t)\\
    \quad + \frac{1-\gamma}{2\gamma}\left(\Sigma_t^{S,M} b(t) + \tilde{\lambda}(t,y) - r_f \mathbf{1}_N \right)^\top (\Sigma^S)^{-1}\left(\Sigma_t^{S,M} b(t) + \tilde{\lambda}(t,y) - r_f \mathbf{1}_N \right) = 0,\\
    c(T) = 0.
\end{cases}
\end{equation}
As established in Section \ref{sec51}, $A(t)$ is negative semi-definite for all $t\leq T$, which in turn implies its diagonal blocks, $A^x(t)$ and $A^\alpha(t)$, are also negative semi-definite. This completes the proof. \hfill $\square$

\newpage
\section{Derivation of the Black-Litterman Variant}
\label{App:BL}
In this section, we derive the conditional dynamics of asset returns and the corresponding allocation policy, following the classical Black-Litterman framework introduced in \cite{Black_1991,Black_1992}. While their original model incorporates views directly on asset returns, we extend the approach to include views on risk factors, maintaining the same principles of the original methodology.

We begin by deriving the conditional distribution of risk factors and asset returns through a one-step Bayesian update. Subsequently, we determine the optimal allocation policy using the standard mean-variance Markowitz optimal framework. Throughout, we fix the realization of the expert views \(Y(0,T)=y\) and work under the
conditional probability measure \(\mathbb Q=\mathbb P(\,\cdot\,\vert Y(0,T)=y)\).

\subsection{Conditional Mean and Variance of the Risk factors}
From Proposition \ref{prop:X^y} the conditional risk factor \(X\) is Gaussian and satisfies
\begin{equation*}
        dX^y(t) = \left(\tilde{\Theta}(t)\left(\mu - X^y\right) + L^X \eta(t) \left(y - P \mu\right)\right) dt + L^X dW^\mathbb{Q}(t),
    \end{equation*}
    where the parameters \(\tilde{\Theta}(t)\) and \(\eta(t)\) are defined in \eqref{eq:eta}.
    Fix a conditioning time \(s\in[0,t)\) and denote
\(\mathcal F_{s}^{Y}:=\sigma\bigl\{X(u), \, S(u), \, Y(0,T):0\le u \le s\bigr\}\). 

\paragraph{Solution of the SDE.}
Let
\(
\phi(u,t):=\exp\bigl(-\int_{u}^{t}\tilde{\Theta}(v)\,\mathrm dv\bigr)
\)
( \(0\le u\le t\le T\) ).  
A standard variation of constants gives
\begin{equation}
\label{eq:Xy-soln}
  X^{y}(t)
    =\phi(s,t)X^{y}(s)
     +\int_{s}^{t}\!\phi(u,t)
        \bigl[L^{X}\eta(u)(y-P\mu)+\tilde{\Theta}(u)\mu\bigr]\mathrm du
     +\int_{s}^{t}\!\phi(u,t)L^{X}\,\mathrm dW^{\mathbb Q}(u).
\end{equation}

\paragraph{Conditional mean.}
Taking the expectation over \eqref{eq:Xy-soln} gives the expectation of risk factors conditional on expert views \(Y(0,T) = y\) and observed market data up until time \(s \in [0,t)\).
\begin{equation}
\label{eq:BL_meanX}
  \mathbb E\bigl[X(t)\,\bigm|\,\mathcal F_{s}^{Y}\bigr]
      =\phi(s,t)X^{y}(s)
       +\int_{s}^{t}\!\phi(u,t)
          \bigl[L^{X}\eta(u)(y-P\mu)+\tilde{\Theta}(u)\mu\bigr]\mathrm du
\end{equation}

\paragraph{Conditional variance.}
Because the last term in~\eqref{eq:Xy-soln} is a martingale in \(\mathbb Q\), by Itô's isometry, we obtain
\begin{equation}
\label{eq:BL_varX}
 \mathbb V\bigl[X(t)\,\bigm|\,\mathcal F_{s}^{Y}\bigr]
      =\int_{s}^{t}\!\phi(u,t)\Sigma^X\phi(u,t)^{\!\top}\,\mathrm du
\end{equation}

\subsection{Conditional Mean and Variance of Log-Returns}

Similarly, from Corollary \ref{corr:S^y}, the conditional price dynamics are
\begin{equation*}
\mathrm dS^{y}(t)
    =D\bigl(S^{y}(t)\bigr)
      \Bigl[\mu^{S}(t,X^{y}(t),y)\,\mathrm dt
            +L^{S}\,\mathrm dW^{\mathbb Q}(t)\Bigr],
\qquad 
\mu^{S}(t,x,y)=\tilde\alpha(t,y)+\tilde\beta(t)x
\end{equation*}
where the coefficients \(\tilde{\alpha}(t,y)\) and \(\tilde{\beta}(t)\) are defined in \eqref{eq:coeff_mu^S}. Let the log–return over the horizon \([0,t]\) be
\[
R^{y}(t):=\ln\frac{S^{y}(t)}{S^{y}(0)},
\]
by Itô's Lemma, we get for \(0 \leq s \leq t \leq T\)
\begin{equation}
\label{eq:R^y-app}
        R^y(t) = R^y(s) +  \int_s^t \left(\tilde{\alpha}(u,y) - \frac{1}{2}\diag(\Sigma^S)  + \tilde{\beta}(u) X^y(u) \right) du + L^S \int_s^t dW^\mathbb{Q}(u).
\end{equation}

\paragraph{Conditional mean of the terminal log-return.}
Conditional on the information \(\mathcal{F}_s^Y\), we get from \eqref{eq:Xy-soln} and \eqref{eq:R^y-app}
\begin{align}
\label{eq:BL_meanR}
  \mu_{BL | s} := \mathbb E\bigl[R^{y}(T)\,\bigm|\,\mathcal F_{s}^{Y}\bigr]
     = R^{y}(s)
       +\int_{s}^{T}\!\!\Bigl[
          \tilde{\alpha}(u,y)
          -\tfrac12\mathrm{diag}(\Sigma_{S})
          +\tilde{\beta}(u)\,\phi(s,u)X^{y}(s)
          \Bigr]\mathrm du \\
  \nonumber\quad
  +\int_{s}^{T}\!\!\tilde{\beta}(u)
      \!\int_{s}^{u}\phi(r,u)
      \bigl[L^{X}\eta(r)(y-P\mu)+\tilde{\Theta}(r)\mu\bigr]\mathrm dr\,\mathrm du .
\end{align}

\paragraph{Conditional variance of the terminal log-return.}
Write \(R^{y}(T)=R^{y}(s)+J+M\) with
\[
  J:=\int_{s}^{T}
       \bigl[\tilde{\alpha}(u,y)-\tfrac12\mathrm{diag}(\Sigma_{S})
           +\tilde{\beta}(u)X^{y}(u)\bigr]\mathrm du,
  \qquad
  M:=\int_{s}^{T}L^{S}\mathrm dW^{\mathbb Q}(u).
\]

Conditional \(F_s^Y\), the log-returns \(R^y(s)\) at time \(s\) are known. The conditional covariance is then
\begin{equation}
\label{eq:BL_varR}
\begin{split}
   \Sigma_{BL|s} :&= \mathbb V\bigl[R^{y}(T)\,\bigm|\,\mathcal F_{s}^{Y}\bigr] \\ &=  \mathbb{V}[M] + \mathbb{V}[J] + \Cov(J,M) + \Cov(J,M)^\top \\
     &=(T-s)\,\Sigma_{S}
      +\iint_{[s,T]^{2}}\!
          \tilde{\beta}(u)\,C_{s}(u,v)\,\tilde{\beta}(v)^{\!\top}
          \mathrm du\,\mathrm dv
      +\int_{s}^{T} \int_s^u \tilde{\beta}(u) \phi(r,u) L^{X}L^{S\top}\mathrm dr \, du\\ 
      &\;\;\;+ \int_{s}^{T} \int_s^u \left(\tilde{\beta}(u) \phi(r,u) L^{X}L^{S\top}\right)^\top\mathrm dr \, du,
\end{split}
\end{equation}
where the (conditional) factor covariance kernel is
\[
  C_{s}(u,v)
   :=\Cov(X^{y}(u),X^{y}(v)
                    \,\bigm|\,\mathcal F_{s}^{Y})
   =\!\int_{s}^{\min\{u,v\}}\!\!\phi(r,u)\Sigma^X
      \phi(r,v)^{\!\top}\mathrm dr .
\]

\begin{remark}
    If the price shocks are orthogonal to factor shocks,
         $L^{S}L^{X\top}=0$, then the risk factor and the noise in the asset price are independent and the cross term is equal to 0. 
\end{remark}

\subsubsection*{Optimal Portfolio Policy}

An investor with risk-aversion \(\gamma>0\) chooses weights
\(
\pi\in\mathbb R^{N}\) at time \(t\) to solve
\[
  \max_{\pi}\;
     \pi^{\top}\bigl(\mu_{BL|t}-r_{f}\mathbf 1_N\bigr)
     -\frac{\gamma}{2}\,\pi^{\top}\Sigma_{BL | t}\,\pi,
\]
where \(r_{f}\) is the risk-free rate and
\(\mu_{BL|s},\Sigma_{BL| s}\) are given by
\eqref{eq:BL_meanR}--\eqref{eq:BL_varR}, and \(1_N \in \mathbb{R}^N\) is a vector of ones,

The first-order condition yields the familiar Black–Litterman-style
optimal weights
\begin{equation}
\label{eq:optBL}
  \pi^{*}_{BL| s}
     =\frac{1}{\gamma}\,\Sigma_{BL| s}^{-1}
      \bigl(\mu_{BL|s}-r_{f}\mathbf 1_{N}\bigr).
\end{equation}
\hfill\(\square\)

\end{appendices}

\end{document}